\author{Nati Linial and Doron Puder\\
Hebrew University, Jerusalem, Israel.}
\title{Word Maps and Spectra of Random Graph Lifts}
\theoremstyle{plain}  \newtheorem{ther}{Theorem}
\theoremstyle{plain}  \newtheorem{lem}[ther]{Lemma}
\theoremstyle{remark} \newtheorem{example}{Example}
\theoremstyle{plain}  \newtheorem{cor}[ther]{Corollary}
\theoremstyle{remark} \newtheorem{remark}[ther]{Remark}
\theoremstyle{plain}  \newtheorem{conj}[ther]{Conjecture}
\theoremstyle{plain}  \newtheorem{defi}[ther]{Definition}
\begin{document}
\maketitle
\date{}


\section*{Abstract}

We study here the spectra of random lifts of graphs. Let $G$ be a
finite connected graph, and let the infinite tree $T$ be its
universal cover space. If $\lambda_1$ and $\rho$ are the spectral
radii of $G$ and $T$ respectively, then, as shown by
Friedman~\cite{fri03}, in almost every $n$-lift $H$ of $G$, all
``new'' eigenvalues of $H$ are $\le
O\left(\lambda_1^{\;1/2}\rho^{1/2}\right)$. Here we improve this
bound to $O\left(\lambda_1^{\;1/3}\rho^{2/3}\right)$. It is
conjectured in~\cite{fri03} that the statement holds with the bound
$\rho + o(1)$ which, if true, is tight by~\cite{gl}. For $G$ a
bouquet with $d/2$ loops, our arguments yield a simple proof that
almost every $d$-regular graph has second eigenvalue $O(d^{2/3})$.
For the bouquet, Friedman~\cite{fri04} has famously proved the
(nearly?) optimal bound of $2\sqrt{d-1} + o(1)$.

Central to our work is a new analysis of formal words. Let $w$ be
a formal word in letters $g_{1}^{\pm 1}, \ldots ,g_{k}^{\pm 1}$.
The {\em word map} associated with $w$ maps the permutations
$\sigma_1, \ldots , \sigma_k \in S_n$ to the permutation obtained
by replacing for each $i$, every occurrence of $g_{i}$ in $w$ by
$\sigma_i$. We investigate the random variable $X_w^{(n)}$ that
counts the fixed points in this permutation when the $\sigma_i$
are selected uniformly at random. The analysis of the expectation
$\mathbb{E}(X_w^{(n)})$ suggests a categorization of formal words
which considerably extends the dichotomy of primitive vs.
imprimitive words. A major ingredient of a our work is a second
categorization of formal words with the same property. We
establish some results and make a few conjectures about the
relation between the two categorizations. These conjectures
suggest a possible approach to (a slightly weaker version of)
Friedman's conjecture.

As an aside, we obtain a new conceptual and relatively simple proof
of a theorem of A. Nica~\cite{nica}, which determines, for every
fixed $w$, the limit distribution (as $n \to \infty$) of
$X_w^{(n)}$. A surprising aspect of this theorem is that the answer
depends {\em only} on the largest integer $d$ so that $w=u^d$ for
some word $u$.

\section{Introduction}
Let $G=(V,E)$ be some fixed finite connected graph with $E= \{g_1,
\ldots, g_k\}$, and let $\lambda_1 \geq \lambda_2 \ldots \geq
\lambda_{|V|}$ be the eigenvalues of its adjacency matrix. We think
of the edges as being oriented, though the results do not depend on
the orientation chosen. We recall that $L_n(G)$ denotes the
probability space of $n$-lifts of $G$ (i.e., graphs that have an
$n$-fold cover map onto $G$). A graph $H \in L_n(G)$, has vertex set
$V \times \{1,\ldots,n\}$. For every (oriented) edge $g_i=(u,v)$, we
choose independently and uniformly a random permutation, $\sigma_i
\in S_n$, and introduce an edge between $(u,j)$ to $(v,\sigma_i(j))$
for all $j$. For background on lifts and random lifts,
see~\cite{lr,al,alm,hlw}. In particular~\cite{bl} shows how to
construct regular graph lifts with a nearly optimal spectral gap.

The projection $\pi:H\rightarrow G$ given by $\pi(u,j) = u$ is the
cover map associated with $H \in L_n(G)$. Every eigenfunction $f$
of (the adjacency matrix of) $G$ is pulled back by $\pi$ to an
eigenfunction $f\circ \pi$ of $H$. Therefore, every eigenvalue of
$G$ is also an eigenvalue of $H$. Such an eigenvalue of $H$ is
considered ``old'' while all other ones are ``new'' (including,
possibly, duplicates of the old eigenvalues corresponding to
``new'' eigenfunctions). Let $T$ be the (infinite) universal cover
of $G$ (and $H$). We consider $l_2(V(T))$, the Hilbert space of
square-summable real functions on $T$'s vertices, i.e. the
functions $f:V(T)\rightarrow \mathbb{R}$ with $\sum_{v\in
V(T)}f^2(v) < \infty$. If $A_T$ is the (infinite) adjacency matrix
of $T$, we consider the linear operator corresponding to $A_T$.
Namely, $f \rightarrow g$ where $g(v)=\sum_{u\sim v}f(u)$. This is
a bounded linear operator on $l_2(V(T))$, and we denote its
spectral radius by $\rho=\rho(T)=\rho(G)$. (Recall that, by
definition, $\rho=sup\{|\nu|~:~\nu \textrm{~is in the spectrum
of~} A_T\}$). Our main result is a new bound on $H$'s \emph{new}
eigenvalues in terms of $\lambda_1$ (the largest eigenvalue of
$G$), and $\rho$.

Let $\mu_{max}:=max\{|\mu|~:~\mu \textrm{~is a new eigenvalue of~}
H\}$. In \cite{fri03}, Friedman showed that $\mu_{max} \leq
\lambda_1^{\;1/2} \rho^{1/2} + o_n(1)$ for almost every $H$. We
improve this bound as follows:

\begin{ther} \label{ther:2/3}
Almost every random $n$-lift $H$ of $G$ satisfies:
\begin{displaymath}
\mu_{max} \leq O\left(\lambda_1^{\;1/3} \rho^{2/3}\right)
\end{displaymath}
More Specifically,
\begin{equation} \label{eq:2/3}
\mu_{max} < max\left(1,3\left(\frac{\rho}{\lambda_1}\right)^{2/3}\right)
\cdot \lambda_1^{\;1/3} \rho^{2/3} +\epsilon
\end{equation}
almost surely for every $\epsilon>0$.\\
\end{ther}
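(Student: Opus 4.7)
The plan is a trace-method attack on the spectral radius of the ``new'' part of $A_H$. Let $A_H^{\mathrm{new}}$ denote the restriction of $A_H$ to the orthogonal complement of the pulled-back eigenfunctions. Since
\[
\mu_{\max}^{2\ell} \le \mathrm{tr}\bigl((A_H^{\mathrm{new}})^{2\ell}\bigr) = \mathrm{tr}(A_H^{2\ell}) - \mathrm{tr}(A_G^{2\ell}),
\]
a good upper bound on $\mathbb{E}[\mathrm{tr}((A_H^{\mathrm{new}})^{2\ell})]$, followed by Markov's inequality and Borel--Cantelli with $\ell = \ell(n) \to \infty$ slowly (say $\ell \sim c \log n$), will yield the almost-sure statement of Theorem~\ref{ther:2/3}.

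Expanding $\mathrm{tr}(A_H^{2\ell})$ as a sum over closed walks of length $2\ell$ in $H$ and averaging over the random permutations gives
\[
\mathbb{E}[\mathrm{tr}(A_H^{2\ell})] = \sum_{v\in V} \sum_W \mathbb{E}\bigl[X_{w(W)}^{(n)}\bigr],
\]
where $W$ ranges over closed walks of length $2\ell$ in $G$ at $v$ and $w(W)$ is the formal word in $g_1^{\pm 1},\dots,g_k^{\pm 1}$ spelled by $W$. Walks whose word freely reduces to the identity are exactly those that lift to closed walks in the universal cover $T$; each contributes $n$ in expectation, and their total count per base vertex grows like $\rho^{2\ell}$ by the very definition of the spectral radius of $A_T$. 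Subtracting $\mathrm{tr}(A_G^{2\ell})$ for the old spectrum leaves
\[
\mathbb{E}\bigl[\mathrm{tr}((A_H^{\mathrm{new}})^{2\ell})\bigr] = (n-1)\sum_v c_{2\ell}^T(v) + \sum_v \sum_{W:\, w(W)\neq e} \bigl(\mathbb{E}[X_{w(W)}^{(n)}] - 1\bigr),
\]
with $c_{2\ell}^T(v)$ the number of closed walks of length $2\ell$ at $v$ in $T$.

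The first sum on the right is at most $|V|(n-1)\rho^{2\ell + o(\ell)}$, which is compatible with the claimed bound since $\rho \le \lambda_1^{1/3}\rho^{2/3}$. The second sum is where the real work lies: I would control $\mathbb{E}[X_w^{(n)}] - 1$ uniformly over nontrivial words $w$ of length $\le 2\ell$ by invoking the new categorization of formal words described in the abstract. Primitive words (not a proper power) contribute $\mathbb{E}[X_w^{(n)}] - 1 = O(1/n)$; proper powers $u^d$ contribute $O(1)$; and the finer sub-classes yield sharper polynomial decay. One must pair these estimates with counts of the walks in $G$ whose word lies in each class: word-classes with slow $n$-decay of $\mathbb{E}[X_w^{(n)}]$ must be scarce among length-$2\ell$ walks (with counts closer to $\rho^{2\ell}$), while word-classes whose walk counts are allowed to approach the ambient $\lambda_1^{2\ell}$ must have fast $n$-decay.

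The heart of the argument, and its main obstacle, is making this trade-off quantitative and showing that the sum balances to $n \cdot (\lambda_1^{1/3}\rho^{2/3})^{2\ell}$ up to sub-exponential factors. The exponent split $1/3$ versus $2/3$ should emerge from a three-way optimisation among the walk-growth rate $\lambda_1$ in $G$, the walk-growth rate $\rho$ in $T$, and the polynomial $n$-decay rate of $\mathbb{E}[X_w^{(n)}]$ attached to each word class. Producing refined asymptotics of $\mathbb{E}[X_w^{(n)}]$ that go beyond Nica's leading term, together with a precise bookkeeping of how many closed walks in $G$ realise a word of given complexity, is the core technical content I expect to need to develop.
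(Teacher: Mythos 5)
Your high-level strategy matches the paper's: a trace-method bound $\mathbb{E}(\mu_{\max}^{t}) \le \sum_{w\in {\cal CP}_t(G)}(\mathbb{E}[X_w^{(n)}]-1)$, a stratification of closed walks in $G$ by the ``complexity'' of the formal word they spell, per-class counts expressed in terms of $\rho$ versus $\lambda_1$, and an optimisation over $t\sim\log n$. But the proposal explicitly defers what is actually the theorem's content, and the deferral is not innocent: with only the estimates you actually write down, the argument terminates at Friedman's $\sqrt{\lambda_1\rho}$ bound, not at $\lambda_1^{1/3}\rho^{2/3}$. Here is why.

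You assert that primitive nontrivial $w$ satisfy $\mathbb{E}[X_w^{(n)}]-1=O(1/n)$ and that ``finer sub-classes yield sharper polynomial decay,'' but you supply no sub-class, no sharper decay, and no count. If one uses only the trichotomy $w=1$ (count $\sim\rho^t$, contribution $\sim n$), $w$ imprimitive (count $\sim\mathrm{poly}(t)\rho^t$, contribution $\sim 1$), and $w$ primitive (count $\sim\lambda_1^t$, contribution $\sim 1/n$), balancing the extreme terms $n\rho^t$ and $\lambda_1^t/n$ forces $n^{1/t}\sim(\lambda_1/\rho)^{1/2}$ and yields $(\lambda_1\rho)^{t/2}$ — exactly the Broder--Shamir/Friedman exponent you were supposed to improve. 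The $1/3$--$2/3$ split requires a strictly finer dichotomy: one needs a subclass of primitive words whose walks can be as numerous as $\lambda_1^t$ but which enjoy the stronger decay $\mathbb{E}[X_w^{(n)}]-1=O(1/n^2)$, \emph{and} one needs the remaining primitive words (those for which only $O(1/n)$ is available) to be countable by $\sim(3\rho)^t$ rather than $\lambda_1^t$. Neither of these facts is elementary, and both are absent from your write-up.

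Concretely, the paper constructs a purely combinatorial grading $\beta(w)$ via type-B quotient graphs and proves two nontrivial facts you would need: (i) Lemma~\ref{lem:phi approx beta=2}, that $\beta(w)\ge 3$ forces $a_2(w)\le 0$, whence $\Phi_w(n)\le O(1/n^3)$ and $\mathbb{E}[X_w^{(n)}]-1=O(1/n^2)$ — this is the ``refined asymptotics beyond Nica's leading term'' that you acknowledge needing but do not produce, and it is genuinely delicate (the proof uses an acyclicity argument for an auxiliary graph $\Upsilon$ built from pairs of same-colour edges of the universal graph); and (ii) a count of the words with $\beta(w)=2$ as $O(t^7 3^t\rho^t)$ via a case analysis over the three rank-2 topological shapes (Figure-Eight, Barbell, Theta) and repeated applications of $\lVert A_T^\ell\rVert=\rho^\ell$. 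Without (i) the decay for the generic class is too slow; without (ii) there is no way to prevent $\lambda_1^t$ from overwhelming the $\beta(w)=2$ stratum. You have correctly identified \emph{where} the work lies, but the specific combinatorial machinery — the $\beta$ grading, the type-A/type-B dichotomy, the $a_2\le 0$ inequality, and the shape-based word counting — is the entire technical contribution of the paper, and the proposal contains none of it.
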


\begin{remark} \label{rem:rho<=lambda1}
Let $\Gamma$ be a (not necessarily finite) connected graph, and
let $v$ be a vertex in $\Gamma$. We denote by $t_s(v)$ the number
of closed paths of length $s$ that start and end at $v$. It is
well known that the spectral radius of $\Gamma$ equals $\limsup_{s
\to \infty}t_s(v)^{1/s}$. In particular, this value is independent
of the choice of $v$. (These facts may be proven by an easy
variation on the proof of Proposition 3.1 in \cite{buc}.)
Returning to our notation, observe that a path that starts and
ends at a vertex $v \in V(T)$ is projected to a path of the same
length that starts and ends at the corresponding vertex of $G$.
Consequently, $\rho \leq \lambda_1$ always holds.
\end{remark}

Our proof of Theorem \ref{ther:2/3} suggests an approach that may
lead to an even better (nearly optimal) bound $\mu_{max} \leq
O\left(\rho\right)$. This plan depends on an unresolved conjecture
that we present shortly. It follows from Lubotzky and
Greenberg~\cite{gl} that this statement cannot hold with any bound
smaller than $\rho -o(1)$. It is shown in~\cite{gl} that for every
infinite tree $T$ and for every $\epsilon>0$, there exists a
constant $c=c(\epsilon,T)>0$, such that if $T$ is the universal
covering space of a finite graph $\Gamma$, then at least $c
|V(\Gamma)|$ of $\Gamma$'s eigenvalues exceed $\rho(T)-\epsilon$.
Thus for $G$ fixed, and for $\epsilon>0$ there exists an
$n_\epsilon$ such that $\mu_{max}(H)
> \rho(G)-\epsilon$ for every $n \geq n_\epsilon$ and every $H \in L_n(G)$.
(Since the infinite $d$-regular tree $T_d$ has spectral radius
$\rho(T_d)=2\sqrt{d-1}$~(\cite{car}), this extends the Alon-Boppana
bound~\cite{nil} that $\lambda_2 \ge 2\sqrt{d-1} - o_n(1)$ for every
$n$-vertex $d$-regular graph).

The ``permutation model'' of random $d$-regular graphs (for $d$
even) is a special case of random lifts of graphs. In the
permutation model, $n$-vertex $d$-regular graphs are generated
through a random $n$-lift of a bouquet of $d/2$ loops. Thus, our
result, as well as Friedman's, extend earlier work on random
$d$-regular graphs. Namely, Friedman's result states that
$\lambda(G) \leq \sqrt{2d\sqrt{d-1}} + o(1)$ for almost every
$d$-regular graph, which is a slight improvement of an old result
of Broder and
Shamir~\cite{bs}. In this special case, Theorem \ref{ther:2/3}
states that $\lambda(G) = O(d^{2/3})$ holds almost surely, and the
tentative proof strategy mentioned above would yield $\lambda(G) =
O(d^{1/2})$ almost surely. In particular, we obtain the following
corollary (which is, of course, substantially weaker 
than the one proven in \cite{fri04}):
\begin{cor} \label{cor:d>=107}
If $G$ is $d$-regular and $d \ge 107$, then
$$
\mu_{max} < \lambda_1^{\;1/3} \rho^{2/3} + \epsilon 
= \left[4d(d-1)\right]^{1/3} + \epsilon.
$$
almost surely for every $\epsilon>0$.
\end{cor}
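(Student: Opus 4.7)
The plan is to deduce Corollary \ref{cor:d>=107} as a purely numerical specialization of Theorem \ref{ther:2/3}. For any $d$-regular graph $G$ the Perron eigenvalue is $\lambda_1 = d$, and its universal cover is the infinite $d$-regular tree $T_d$, whose spectral radius (as already noted in the introduction) is $\rho = 2\sqrt{d-1}$. Substituting these values yields
\[
\lambda_1^{\,1/3}\rho^{\,2/3} = d^{1/3}\bigl(2\sqrt{d-1}\bigr)^{2/3} = \bigl[4d(d-1)\bigr]^{1/3},
\]
which matches the explicit constant on the right-hand side of the corollary.

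The substantive step is to verify that under the hypothesis $d \ge 107$ the multiplicative prefactor in inequality~(\ref{eq:2/3}) collapses to $1$, i.e.\ that $3(\rho/\lambda_1)^{2/3} \le 1$. This is equivalent to $\rho \le \lambda_1/(3\sqrt{3})$, and after substituting and squaring it becomes the quadratic inequality $d^2 - 108\,d + 108 \ge 0$. The larger root of this quadratic is $54 + \sqrt{2808}$, which lies just below $107$: one checks $107^2 - 108\cdot 107 + 108 = 1 > 0$, whereas $106^2 - 108\cdot 106 + 108 = -104 < 0$. Hence $d = 107$ is precisely the smallest integer for which the max in~(\ref{eq:2/3}) equals $1$, which pins down the threshold stated in the corollary.

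No serious obstacle is expected, since once Theorem~\ref{ther:2/3} is granted the entire argument is arithmetic: recognize $\rho(T_d) = 2\sqrt{d-1}$, reduce the condition on the $\max$ to a quadratic in $d$, and locate its integer threshold. The one place to be careful is keeping track of signs when squaring $2\sqrt{d-1} \le d/(3\sqrt{3})$, which is legitimate here because both sides are positive for all $d \ge 2$.
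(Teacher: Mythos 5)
Your proof is correct and coincides with what the paper intends: the corollary is a direct numerical specialization of Theorem~\ref{ther:2/3} using $\lambda_1 = d$, $\rho = 2\sqrt{d-1}$, and the observation that the prefactor $\max(1, 3(\rho/\lambda_1)^{2/3})$ equals $1$ precisely when $d^2 - 108d + 108 \ge 0$, whose integer threshold is $d=107$. The arithmetic checks out ($107^2 - 108\cdot 107 + 108 = 1$, while the value at $d=106$ is negative), and the paper gives no separate proof because it is exactly this routine substitution.
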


A major tool in this area is the \emph{Trace Method} which goes
back to Wigner \cite{wig}. It is based on a natural connection
between graph spectra and word-maps. This approach underlies the
work of Broder-Shamir \cite{bs} and of Friedman \cite{fri03}.

Let $w$ be a (not necessarily reduced) formal word in the letters
$g_{1}^{\pm1}, \ldots ,g_{k}^{\pm1}$. For every $k$-tuple
$(\sigma_{1}, \ldots , \sigma_{k})$ of permutations in $S_{n}$, we
form the permutation $w(\sigma_{1}, \ldots , \sigma_{k}) \in
S_{n}$, by replacing $g_{1}, \ldots ,g_{k}$ with $\sigma_{1},
\ldots ,\sigma_{k}$ in the expression of $w$. For instance, if
$w=g_2 g_1^{\;2} g_2^{\;-1} g_3$, then $w(\sigma_1, \sigma_2,
\sigma_3) = \sigma_2 \sigma_1^{\;2} \sigma_2^{\;-1} \sigma_3$. The
correspondence between $w$ and the permutation $w(\sigma_{1},
\ldots , \sigma_{k}) \in S_{n}$ is called a {\em word map}. Such
maps can be evaluated in groups other than $S_n$ as well (we refer
to this briefly in Section \ref{sec:word-maps}). The study of word
maps has a long history in group theory (see \cite{lsh} and the
references therein). Our perspective is mostly combinatorial and
probabilistic.\\

For fixed formal word $w$ we denote by $X_w^{(n)}$ a random variable
on $S_{n}^{\;k}$ which is defined by:
\begin{equation} \label{eq:xwn}
X_w^{(n)}(\sigma_{1},\ldots,\sigma_{k}) = \textrm{\# of fixed points
of } w(\sigma_{1}, \ldots , \sigma_{k}).
\end{equation} \\

Now let $H$ be an $n$-lift of $G$ and let $A_G, A_H$ be the
adjacency matrices of $G,H$ resp. We denote by $\mu$ a running
index for the ``new'' eigenvalues of $H$. For every $t\geq1$, the
trace of $A_H^{\;~t}$ equals the number of closed paths of length
$t$ in $H$. This number can also be expressed as $\left(\sum_\mu
\mu^t \right) + \left(\sum_{i=1}^{|V(G)|}\lambda_i^{\;t}\right)$.
Therefore, for $t$ even we obtain:
\begin{displaymath}
\mu_{max}^{\;~~~t} \leq \sum_\mu \mu^t = \left(\sum_\mu \mu^t +
\sum_{i=1}^{|V(G)|}\lambda_i^{\;t}\right) -
\sum_{i=1}^{|V(G)|}\lambda_i^{\;t} = tr(A_H^{\;~t}) - tr(A_G^{\;~t})
\end{displaymath}

Every closed path in $H$ is a lift of a closed path in $G$. Since
the edges of $G$ are labeled $g_1,\ldots,g_k$, every (closed) path
in $G$ corresponds to some formal word $w$ in $g_1^{\pm
1},\ldots,g_k^{\pm 1}$. The closed lifts of this path are in $1:1$
correspondence with the fixed points of $w(\sigma_{1}, \ldots ,
\sigma_{k})$, so that their number is $X_w^{(n)}(\sigma_{1},
\ldots , \sigma_{k})$. Let ${\cal CP}_t(G)$ denote the set of all
closed paths of length $t$ in $G$ (i.e., $|{\cal CP}_t(G)|=
tr(A_G^{\;t})$). The above inequality now becomes:

\begin{displaymath}
\mu_{max}^{\;~~~t} \leq tr(A_H^{\;~t}) - tr(A_G^{\;~t}) = \sum_{w
\in {\cal CP}_t(G)} \left[X_w^{(n)}(\sigma_{1}, \ldots , \sigma_{k})
- 1 \right]
\end{displaymath}

Taking expectations, we obtain:
\begin{equation} \label{eq:trace}
\mathbb{E}(\mu_{max}^{\;~~~t}) \leq
\sum_{w \in {\cal CP}_t(G)} \left[\mathbb{E}(X_w^{(n)}) - 1
\right]
\end{equation}

Equation \eqref{eq:trace} shows the significance of
$\mathbb{E}(X_w^{(n)}) - 1$ in the study of spectra in random lifts.
If we let $\Phi_w(n)$ equal $\frac{\mathbb{E}(X_w^{(n)}) - 1}{n}$,
it turns out that for every $w$, $\Phi_w$ can be expressed as a
power series in $\frac{1}{n}$. Namely,

\begin{equation} \label{eq:Phi}
\Phi_w(n) = \frac{\mathbb{E}(X_w^{(n)}) - 1}{n} = \sum_{i=0}^\infty
a_i(w)\frac{1}{n^i},
\end{equation}
where the $a_i(w)$ are integers. (This fact appears in \cite{nica},
but we present (Lemma \ref{lem:Phi}) a new and simpler proof). This
induces a categorization of words in $\mathbf{F}_k$, the free group
with generators $g_1,\ldots,g_k$. Namely, $\phi(w)$ is the smallest
index $i$ for which $a_i(w) \neq 0$, or $\infty$ if
$\mathbb{E}(X_w^{(n)}) \equiv 1$.\\

We consider next (Section~\ref{sbs:beta}) another categorization
of the words in $\mathbf{F}_k$, which does not depend on a
word-map to specific groups such as $S_n$. To every $w \in
\mathbf{F}_k$ we associate $\beta(w)$ which is a non-negative
integer or $\infty$. The categorizations induced by both $\phi(w)$
and $\beta(w)$ extend the dichotomy between primitive and
imprimitive words (Recall that $w$ is called \emph{imprimitive} if
$w=u^{d}$ for some $u \in \mathbf{F}_{k}$ and $d\geq2$). Without
going into the (somewhat lengthy) definition, let us say that the
main step in both \cite{bs} and \cite{fri03} can be viewed as the
observation that for $i=0,1$, $\phi(w)=i$ iff $\beta(w)=i$. Our
aforementioned conjecture states in this language that
$\phi(w)=\beta(w)$ for every word $w$ (Conjecture
\ref{conj:beta=phi}). These relations between $\phi$ and $\beta$
allow us to bound the sum in the r.h.s of \eqref{eq:trace}: We can
bound the contribution of $w$ to this sum in terms of $\phi(w)$.
This is complemented by bounding the number of words $w \in {\cal
CP}_t(G)$ with a given value of $\beta(w)$ which bound is stated
in terms of $\rho$. Indeed, a key step in the present paper (Lemma
\ref{lem:beta>=3}) can be interpreted as a partial proof of the
claim that $\beta(w)=2$ iff $\phi(w)=2$.

As an aside to our work we obtain a new conceptual and relatively
simple proof of a theorem of A. Nica~\cite{nica}, which determines
for every fixed $w$ the limit distribution of $X_w^{(n)}$ as $n
\to \infty$ (see Theorem \ref{ther:limit-xwln}). We carry out a
similar analysis for all higher moments of $X_w^{(n)}$, and use
the method of moments to derive Nica's result. A surprising aspect
of this theorem is that the limit distribution depends {\em only}
on the largest integer $d$ such that $w=u^d$ for some $u \in
\mathbf{F}_{k}$. Nica's full result (which we derive by the same
argument) concerns not only fixed points but applies just as well
to the number of $L$-cycles for any fixed $L \geq 1$.

The paper is arranged as follows. We begin (Section
\ref{sec:word-maps}) with our analysis of word maps and introduce
the two new categorizations of formal words. Based on this
analysis, we prove Theorem \ref{ther:2/3} in Section
\ref{sec:lifts}. In Section \ref{sec:nica} we deal with the
distribution of the number of $L$-cycles in $w(\sigma_1,\ldots,
\sigma_k)$ and present our new proof for Nica's Theorem. For the
reader interested only in this new proof, this section is mostly
self-contained with only occasional references to earlier parts of
the paper. There are numerous open problems and conjectures raised
in this paper, some of which we collect in
Section~\ref{sec:open_prob}.

\section{Word Maps and the Level of Primitivity of a Word}
\label{sec:word-maps}

We begin with some notation. We denote by $\Sigma_k$ the set of
all finite words in letters $g_{1}^{\pm 1}, \ldots ,g_{k}^{\pm 1}$
(though we occasionally use the letters $a,b,c,\ldots$ instead).
The quotient of $\Sigma_k$ modulo reduction of words is
$\mathbf{F}_k$, the set of elements of the free group on $k$
generators. For instance, the set ${\cal CP}_t(G)$ introduced
before Equation \eqref{eq:trace} is a subset of $\Sigma_k$, so it
may contain different words which are equivalent as members of
$\mathbf{F}_k$.

For every group $P$ and every word $w\in\Sigma_k$, the \emph{word
map} $w:P^k \rightarrow P$ is defined by substitutions and
composition. For $p_1,\ldots,p_k \in P$, the element
$w(p_1,\ldots,p_k)$ is obtained by substituting $p_i$ for each
occurrence of $g_i$ in $w$ and this for every $1 \le i \le k$.
Clearly, the word map of $w$ is invariant under reductions, so we
can regard $w$ as an element in $\mathbf{F}_k$.

Most research on word maps concerns the range of certain fixed
words $w$ in a group $P$. More specifically, for $P$ finite, it is
of interest to understand the distribution induced on $P$ by the
word map $w:P^k \rightarrow P$ and the uniform distribution on
$P^k$. This perspective makes it natural to consider an
equivalence relation on words (beyond that of reduction).

In order to introduce this equivalence relation, we now recall
some simple terminology from combinatorial group theory. There are
three \emph{elementary Nielsen transformations} defined on the
free group $\mathbf{F}_k$: (i) Exchanging any two generators $g_i$
and $g_j$ for some $i \neq j$, (ii) Replacing some $g_i$ with
$g_i^{-1}$, (iii) Replacing any $g_i$ by $g_ig_j$, for some $i
\neq j$. We recall (e.g.~\cite{mks}, Theorem 3.2) that these
transformations generate the automorphism group $\mathbf{A}_k$ of
$\mathbf{F}_k$. We say that two words $w_1, w_2 \in \mathbf{F}_k$
are equivalent, and denote $w_1 \sim w_2$, if they belong to the
same orbit of $\mathbf{A}_k$. Obviously, $``\sim"$ is an
equivalence relation. It is quite clear that for every finite
group $P$, every two equivalent words $w_1, w_2 \in \mathbf{F}_k$
induce the same distribution on $P$. We do not know whether
the converse is true as well, and we state a specific problem
(Conjecture~\ref{nielsen_conj}) in this vein.

Given a word $w$ and the distribution it induces on a group $P$,
it is of interest to consider how far this distribution is from
the uniform distribution. The two gradings of words $\phi(\cdot)$
and $\beta(\cdot)$ can be viewed as our attempts to capture this
intuition. Both parameters associate a non-negative integer or
$\infty$ with every $w \in \mathbf{F}_k$, and they tend to grow
as the above-mentioned distance decreases. Of course, the distribution
furthest away from the uniform distribution corresponds to the
word $w=1$. Indeed, $\beta(w) = 0$ iff $\phi(w) = 0$ iff $w=1$
(Lemma~\ref{lem:phi=beta=0}).
Also, $\beta(w) = 1$ iff $\phi(w) = 1$ iff $w$ is imprimitive
(Lemma~\ref{lem:phi=beta=1}). In
this case the range of $w$ contains only powers of certain
exponent. (Recall that $w \in \mathbf{F}_k$ is called imprimitive
if $w=u^d$ for some word $u$ and $d \geq 2$.) At the other end of
the scale, both $\beta(w)$ and $\phi(w)$ equal $\infty$ for words
that are $\sim$-equivalent to a single-letter word. Clearly, such
words always induce the uniform distribution on $P$. Another
important property is that both $\beta$ and $\phi$ are invariant
under $``\sim"$.

The definition of $\phi(w)$ depends on the word map for the
symmetric group $S_n$ (see Section \ref{sbs:exp-fix-points}). The
definition of $\beta(w)$ is more involved and is based on a
certain analysis of $w$ as a formal word without reference to
groups (see Section \ref{sbs:beta}). In fact, we have arrived at
our definition of $\beta(w)$ through our study of $\phi(w)$. Some
proven results and extensive numerical simulations suggest that
$\phi(w)=\beta(w)$ for every $w$ (Section \ref{sbs:phi=beta}). One
advantage of the parameter $\beta$ over $\phi$ is that we can
bound the number of words with fixed value of $\beta$ - see
Section \ref{sbs:n_words_with_beta=}.

Since both $\phi(w)$ and $\beta(w)$ offer an extension of the
primitive-imprimitive dichotomy for words, we tend to think of
them as quantifying \emph{``the level of primitivity''} of a word
(this level is 0 if $w=1$, it is 1 if $w$ is imprimitive, and
$\geq 2$ for primitive words - see Section \ref{sbs:phi=beta}).

\subsection{The Word Map $w:S_n^{\;k} \rightarrow S_n$ and $\phi(\cdot)$}
\label{sbs:exp-fix-points}

In this section we present a method to calculate the expectation
of $X_w^{(n)}$, the number of fixed points in $w(\sigma_{1},
\ldots , \sigma_{k})$ (defined in \eqref{eq:fixed-points-exp}). We
count the fixed points in $w(\sigma_1,\ldots,\sigma_k)$ for
\emph{all} $k$-tuples $(\sigma_1,\ldots,\sigma_k) \in S_n^{\;k}$
and divide by $(n!)^k$. This calculation is carried out through a
certain categorization of all fixed points. We note that similar
considerations appear in~\cite{nica} and in~\cite{fri91}.

We begin with some technicalities. Let
$w=g_{i_{1}}^{\;\alpha_{1}}g_{i_{2}}^{\;\alpha_{2}}\ldots
g_{i_{m}}^{\;\alpha_{m}} \in \Sigma_k$, where $i_1,\ldots,i_m \in
\{1,\ldots,k\}$ and $\alpha_1,\ldots,\alpha_m \in \{-1,1\}$, and let
$\sigma_1, \ldots, \sigma_k \in S_n$. Assume that $s_0 \in
\{1,\ldots,n\}$ is a fixed point of $w(\sigma_1,\ldots,\sigma_k)$.
Associated with $s_0$ is the following closed trail:
\begin{displaymath}
s_0 \stackrel{\sigma_{i_1}^{\;\alpha_1}}{\longrightarrow} s_1
\stackrel{\sigma_{i_2}^{\;\alpha_2}}{\longrightarrow} s_2
\stackrel{\sigma_{i_3}^{\;\alpha_3}}{\longrightarrow} \ldots
\stackrel{\sigma_{i_{m-1}}^{\;\alpha_{m-1}}}{\longrightarrow}
s_{m-1} \stackrel{\sigma_{i_m}^{\;\alpha_m}}{\longrightarrow} s_0
\end{displaymath}
with $s_1,\ldots,s_{m-1} \in \{1,\ldots,n\}$, and $s_{b ~
\textrm{mod} ~ m} = \sigma_{i_b}^{\;\alpha_b}(s_{b-1})$
($b=1,\ldots,m$).

Note that for the sake of convenience, we compose permutations from
left to right. This is inconsequential for the analysis of the
variables $X_w^{(n)}$ since $w(\sigma_1,\ldots,\sigma_k)$ with
left-to-right composition is the inverse of
$w(\sigma_1^{\;-1},\ldots,\sigma_k^{\;-1})$ with right-to-left
composition, and thus both have the same cycle structure.

We categorize fixed points according to their associated trails. Let
$s_0\to\ldots\to s_{m-1} \to s_0$ and $s'_0\to\ldots\to s'_{m-1} \to
s'_0$ be the trails of the fixed points $s_0$ and $s'_0$ in
$w(\sigma_1,\ldots \sigma_k)$ and $w(\sigma'_1,\ldots,\sigma'_k)$
respectively. These two trails are placed in \emph{the same
category}, if they have the same coincidence pattern, that is, if
for every $i,j\in\{0,\ldots,m-1\}$, $s_i=s_j \Leftrightarrow
s'_i=s'_j$.

Each closed trail consists of $m$ integers, or points, possibly with
repetitions, and each category of trails uniquely corresponds to
some \emph{partition} of these $m$ points. Consequently, there are
at most $B(m)$ categories, where the $m$-th Bell Number, $B(m)$, is
the number of partitions of an $m$ element set. This bound is,
however, not tight. For instance, if $s_{a+1}=\sigma_j(s_a)$ and
$s_{b+1}=\sigma_j^{\;-1}(s_b)$, then $s_a = s_{b+1} \Leftrightarrow
s_{a+1} = s_b$. Therefore, not every partition corresponds to a
realizable category of trails.

It is convenient to associate a directed edge-colored graph $\Gamma$
with each category. Vertices in $\Gamma$ correspond to blocks in the
partition that defines $\Gamma$'s category. In other words, $\Gamma$
has as many vertices as the number of distinct integers among the
$s_b$'s. There is a directed edge labeled $j$ from one vertex
(=block) to another, whenever the trails include an arrow labeled
$\sigma_j$ (resp. $\sigma_j^{\;-1}$) from a point in the first
(second) block to a point in the second (first) one.

Of special importance is the graph associated with the finest
possible partition which we call \emph{the universal graph}. Two
points in the trail are merged in this partition if and only if they
are merged in every realizable partition. If $w$ is cyclically
reduced (i.e., no two consecutive letters are inverses, nor are the
first and last letter), this is the partition where all $m$ points
in the trail are distinct. To illustrate, we draw in Figure
\ref{fig:univ-graphs} the universal graph of three different words.

\begin{figure}[htb]
\centering
\includegraphics[width=0.9\textwidth]{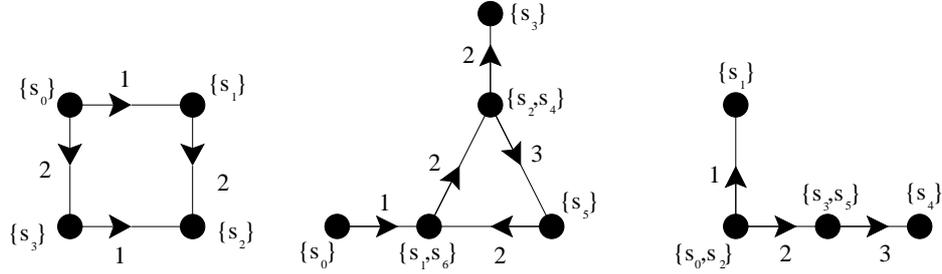}
\caption{From left to right: the universal graphs of
$w=g_1g_2g_1^{\;-1}g_2^{\;-1}$ (the commutator word), of $w=g_1 g_2
g_2 g_2^{\;-1} g_3 g_2 g_1^{\;-1}$, and of $w=g_1 g_1^{\;-1} g_2 g_3
g_3^{\;-1} g_2^{\;-1}$.} \label{fig:univ-graphs}
\end{figure}

All other graphs are now easily derived as quotients of the
universal graph, or partitions of its vertices. A quotient graph has
one vertex per each block in the partition. It has a $j$-labeled
directed edge ($j$-edge for short) from block $v_1$ to block $v_2$,
if the universal graph contains a $j$-edge from a vertex in $v_1$ to
a vertex in $v_2$. A quotient is not \emph{realizable} if it contains
two distinct $j$-edges with common head and different tails or
vice-versa. We denote by ${\cal Q}_w$ the set of all realizable
quotients .

To illustrate, we draw (Figure \ref{fig:7graphs}) all the realizable
quotient graphs of the universal graph of the commutator word (one
of the graphs in Figure \ref{fig:univ-graphs}). Note that a four
element set has 15 partitions (the fourth Bell number, $B(4)= 15$),
of which only 7 are realizable in this case.

\begin{figure}[htb]
\centering
\includegraphics[width=0.8\textwidth]{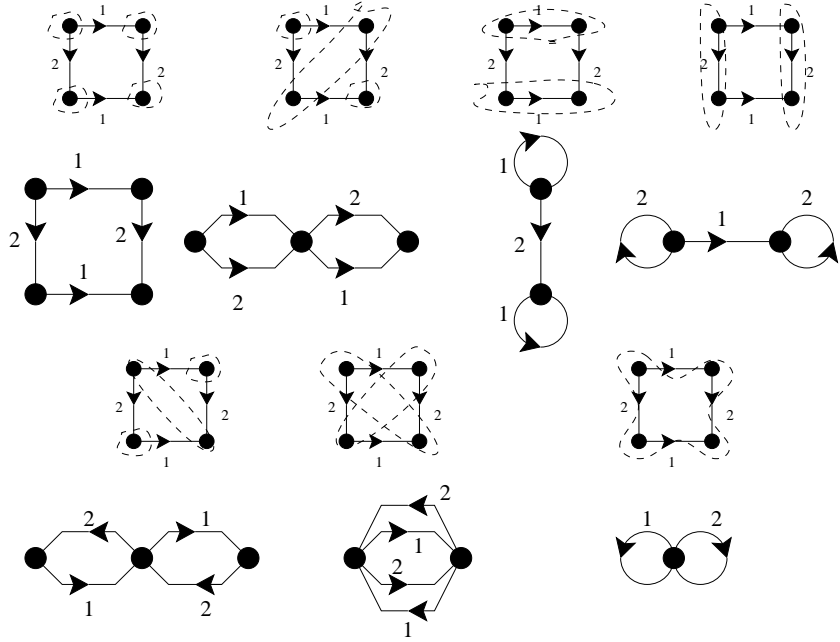}
\caption{The 7 different graphs in ${\cal Q}_w$, the set of graphs
of categories of fixed points when $w=g_1g_2g_1^{\;-1}g_2^{\;-1}$.
Each graph is drawn together with the partition of the universal
graph which yields it. (We do not specify the block corresponding to each vertex).} \label{fig:7graphs}
\end{figure}

These graphs suggest a simple formula for the number of fixed points
in each category. Let $v_\Gamma$, ($e_\Gamma$) be the number of
vertices (edges) in the graph $\Gamma$, and $e_\Gamma^j$ be the
number of $j$-edges (and so $e_\Gamma=\sum_{j=1}^k e_\Gamma^j$). To
count the number of fixed points in $\Gamma$'s category, or the
number of \emph{realizations} of $\Gamma$, we first label $\Gamma$'s
vertices by distinct numbers from $\{1, \ldots, n\}$ (i.e., specify
the values of $s_0,\ldots,s_{m-1}$) . This can be done in
$n(n-1)\ldots (n-v_\Gamma+1)$ ways. For each $j=1,\ldots,k$ there
are $\big(n-e_\Gamma^j\big)!$ permutations that are consistent with
the $e_\Gamma^j$ values in the permutation $\sigma_j$ that are
already determined. Thus, the number of realizations of $\Gamma$ is:

\begin{equation} \label{eq:n-realizations}
N_\Gamma(n) =
n(n-1)\ldots(n-v_\Gamma+1)\prod_{j=1}^{k}\big[n-e^j_\Gamma\big]!
\end{equation}

A formula for the expectation of $X_w^{(n)}$ is now at hand:

\begin{eqnarray} \label{eq:fixed-points-exp}
\mathbb{E}(X_w^{(n)}) &=& \frac{1}{(n!)^k}
\sum_{\sigma_1,\ldots,\sigma_k \in S_n}
X_w^{(n)}(\sigma_1,\ldots,\sigma_k)  = \frac{1}{(n!)^k} \sum_{\Gamma
\in {\cal Q}_w}N_\Gamma(n) = \nonumber \\
&=& \sum_{\Gamma \in {\cal Q}_w}
\frac{n(n-1)\ldots(n-v_\Gamma+1)}{\prod_{j=1}^k n(n-1)\ldots
(n-e_\Gamma^j+1)} = \nonumber \\
&=& \sum_{\Gamma \in {\cal Q}_w}
\left(\frac{1}{n}\right)^{e_\Gamma-v_\Gamma}
\frac{\prod_{l=1}^{v_\Gamma-1}(1-\frac{l}{n})}
{\prod_{j=1}^k\prod_{l=1}^{e_\Gamma^j-1}(1-\frac{l}{n})}
\end{eqnarray}
(The third equality holds only for $n$ that is $\ge e_\Gamma^j$ for
all $j$ and $\Gamma$.)

We illustrate these calculations for $w$ the commutator word
$g_1g_2g_1^{\;-1}g_2^{\;-1}$. If we go over the graphs in Figure
\ref{fig:7graphs} in clockwise order starting at the upper-left
graph, \eqref{eq:fixed-points-exp} becomes:
\begin{eqnarray*}
\mathbb{E}(X_w^{(n)}) =&& \frac{n(n-1)(n-2)(n-3)}{[n(n-1)][n(n-1)]}
+
\frac{n(n-1)(n-2)}{[n(n-1)][n(n-1)]} + \\
&& + \frac{n(n-1)}{[n(n-1)][n]} + \frac{n(n-1)}{[n][n(n-1)]} +
\frac{n}{[n][n]} + \\
&& + \frac{n(n-1)}{[n(n-1)][n(n-1)]} +
\frac{n(n-1)(n-2)}{[n(n-1)][n(n-1)]} = \frac{n}{n-1}
\end{eqnarray*}

In \eqref{eq:Phi} we defined $\Phi_w(n)$ to be
$\frac{\mathbb{E}(X_w^{(n)}) - 1}{n}$. The following lemma states an
important property of $\Phi_w$:

\begin{lem} \label{lem:Phi}
Associated with every $w \in \mathbf{F}_k$ is a power series
$\sum_{i=0}^\infty a_i(w) x^i$ that has a positive radius of
convergence where the coefficients $a_i(w)$ are integers. In
particular, for every $w$ and for every sufficiently large $n$,
there holds $\Phi_w(n) = \sum_{i=0}^\infty a_i(w)\frac{1}{n^i}$.
\end{lem}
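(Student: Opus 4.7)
The plan is to read~\eqref{eq:fixed-points-exp} as expressing $\mathbb{E}(X_w^{(n)})$ as a finite sum of rational functions of $n$, expand each around $n=\infty$, and verify that the combined expansion has the claimed form. Writing $(n)_k := n(n-1)\cdots(n-k+1)$, the $\Gamma$-summand equals $(n)_{v_\Gamma}/\prod_j (n)_{e_\Gamma^j}$, which factors as $n^{v_\Gamma - e_\Gamma}\cdot R_\Gamma(1/n)$ with
\[
R_\Gamma(x) \;=\; \frac{\prod_{l=1}^{v_\Gamma-1}(1-lx)}{\prod_{j=1}^{k}\prod_{l=1}^{e_\Gamma^j-1}(1-lx)}.
\]
Expanding $(1-lx)^{-1} = \sum_{m\ge 0} l^{m} x^{m}$ (convergent for $|x|<1/l$) shows each $R_\Gamma$ to be a power series in $x$ with integer coefficients and radius of convergence at least $1/L_\Gamma$, where $L_\Gamma := \max_j(e_\Gamma^j - 1)$.

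The crucial structural input, which I expect to be the only step requiring argument rather than routine bookkeeping, is the bound $v_\Gamma - e_\Gamma \le 1$ for every $\Gamma \in \mathcal{Q}_w$. The universal graph is traced out by the closed trail $s_0 \to s_1 \to \cdots \to s_{m-1} \to s_0$ and is therefore connected; any quotient of a connected graph is connected, and any connected graph satisfies $e \ge v - 1$. With this inequality, the summand $n^{v_\Gamma - e_\Gamma} R_\Gamma(1/n)$ is a Laurent series in $1/n$ with integer coefficients whose leading order is at most $n^{1}$.

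Summing the finitely many $\Gamma \in \mathcal{Q}_w$, for every $n$ exceeding $L := \max_\Gamma L_\Gamma$ one obtains
\[
\mathbb{E}(X_w^{(n)}) \;=\; c_{-1}\,n + c_0 + \frac{c_1}{n} + \frac{c_2}{n^2} + \cdots
\]
with integer coefficients $c_i$. Forming $\Phi_w(n) = (\mathbb{E}(X_w^{(n)}) - 1)/n$ then shifts every power of $1/n$ by one and absorbs the $-1$ into the constant term, producing the power series $\sum_{i\ge 0} a_i(w)\,x^{i}$ (with $a_0 = c_{-1}$, $a_1 = c_0 - 1$, and $a_i = c_{i-1}$ for $i \ge 2$) evaluated at $x = 1/n$. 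All coefficients are integers and the radius of convergence is at least $1/L > 0$, as claimed. The entire proof thus reduces to the connectedness bound $v_\Gamma - e_\Gamma \le 1$ combined with a direct expansion of the explicit rational formula~\eqref{eq:fixed-points-exp}.
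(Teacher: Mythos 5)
Your proposal is correct and follows essentially the same route as the paper: both start from the explicit rational expression~\eqref{eq:fixed-points-exp}, factor out the power of $n$, expand geometrically around $n=\infty$, and rely on the connectedness inequality $e_\Gamma - v_\Gamma + 1 \ge 0$ to keep the powers of $1/n$ nonnegative after forming $\Phi_w$. You correctly identified the connectedness bound as the only nontrivial step, and your bookkeeping of how the $-1$ and the division by $n$ shift the coefficients matches the paper's derivation via equation~\eqref{eq:Phi-series}.
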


\begin{proof}
As we saw in \eqref{eq:fixed-points-exp}, for large enough $n$
(e.g., $n \geq |w|$ suffices),
\begin{equation} \label{eq:Phi-series}
\Phi_w(n) = -\frac{1}{n} + \sum_{\Gamma \in {\cal Q}_w}
\left(\frac{1}{n}\right)^{e_\Gamma-v_\Gamma+1}
\frac{\prod_{l=1}^{v_\Gamma-1}(1-\frac{l}{n})}
{\prod_{j=1}^k\prod_{l=1}^{e_\Gamma^j-1}(1-\frac{l}{n})}.
\end{equation}
Since every $\Gamma$ is a connected graph, we have
$e_\Gamma-v_\Gamma+1 \geq 0$. The lemma follows when we individually
consider the expression corresponding to each $\Gamma$. (See the
proof of Lemma \ref{lem:analyze-o(1)} for a thorough analysis of
these expressions).
\end{proof}

By Lemma \ref{lem:Phi}, the contribution of every $w \in {\cal
CP}_t(G)$ in \eqref{eq:trace}, is $\frac{a_i(w)+o(1)}{n^{i-1}}$ for
some nonnegative integer $i$. This induces the following useful
grading of words:
\begin{equation} \label{eq:phi}
\phi(w) := \left\{ \begin{array}{ll} \textrm{the smallest integer
$i$ with }a_i(w) \neq 0 & \textrm{ if } \mathbb{E}(X_w^{(n)}) \not
\equiv 1 \\
\infty & \textrm{ if } \mathbb{E}(X_w^{(n)}) \equiv 1
\end{array} \right.
\end{equation}

Recall that although the construction of $\Phi_w$ depends on the
actual representation of $w$ (a reduction of $w$ usually changes
${\cal Q}_w$), this function captures some features of the
distribution of the image of $w$ on $S_n$. Thus $\Phi_w$, as well
as $\phi(w)$, are invariant not only under reduction, but also
under $``\sim"$.

With this new terminology we can reinterpret both \cite{bs} and
\cite{fri03} as follows: Both papers rely on the fact that
$\phi(w)=0$ iff $w$ reduces to the empty word, and that
$\phi(w)=1$ iff $w$ is imprimitive (see Lemmas
\ref{lem:phi=beta=0} and \ref{lem:phi=beta=1}). To study
the new spectrum of random lifts, one proceeds as follows:
The number of words of these two kinds can be bounded in terms of
$\rho$ (the spectral radius of the universal cover) alone (and does not depend on $\lambda_1$, the spectral radius of the
base graph). Finally, the rest of the words (which are, in
fact, the vast majority) contribute to the summation in
\eqref{eq:trace} only $O\left(\frac{1}{n}\right)$ each.

Here we extend these ideas and seek (with partial success) a
similar characterization for all words with $\phi(w)=i$ for fixed
$i$. Our analysis of the new spectrum extends these arguments and
refines them. We further split the above-mentioned third set and
attain an improved bound on the contributions of these
subsets to the sum in Equation~\eqref{eq:trace}.

\subsection{More on the Level of Primitivity: $\beta(\cdot)$}
\label{sbs:beta}

We now start our second attempt at capturing the ``level of
primitivity" of $w$ by means of the parameter $\beta(w)$. We begin
with some definitions.

Recall the notion of a trail from Section \ref{sbs:exp-fix-points}.
Consider then the following trail through $w =
g_{i_{1}}^{\;\alpha_{1}} g_{i_{2}}^{\;\alpha_{2}} \ldots
g_{i_{|w|}}^{\;\alpha_{|w|}}$ (this time, the $s_i$'s should not be
thought of as numbers but rather as abstract symbols, and the trail
deliberately ends with $s_{|w|}$ and not with $s_0$):
\begin{displaymath}
s_0 \stackrel{g_{i_1}^{\;\alpha_1}}{\longrightarrow} s_1
\stackrel{g_{i_2}^{\;\alpha_2}}{\longrightarrow} s_2
\stackrel{g_{i_3}^{\;\alpha_3}}{\longrightarrow} \ldots
\stackrel{g_{i_{|w|-1}}^{\;\alpha_{m-1}}}{\longrightarrow} s_{|w|-1}
\stackrel{g_{i_{|w|}}^{\;\alpha_m}}{\longrightarrow} s_{|w|}
\end{displaymath}

As in our former definition of a realizable category of trails, we
say that a partition of $\{s_0,\ldots,s_{|w|}\}$ is realizable if
the following conditions hold: Whenever $i_h=i_l$ and
$\alpha_h=\alpha_l$, $s_{h-1}$ is in the same block with $s_{l-1}$
(we denote $s_{h-1} \equiv s_{l-1}$) iff $s_h \equiv s_l$. Likewise,
whenever $i_h=i_l$ and $\alpha_h=-\alpha_l$, $s_{h-1} \equiv s_l
\Leftrightarrow s_h \equiv s_{l-1}$. (In other words, a partition
is realizable whenever it traces a trail of some point, fixed or not,
through $w(\sigma_1,\ldots,\sigma_k)$ for some $\sigma_1,\ldots,\sigma_k \in S_n$
and some $n$.)

As before, to each realizable partition of $\{s_0,\ldots,s_{|w|}\}$
corresponds a directed edge-colored graph $\Gamma$, which is a
quotient of the graph of the trail. According to our former
notation, $\Gamma \in {\cal Q}_w$ whenever $s_0 \equiv s_{|w|}$. We
now concentrate on the number of pairs of $s_i$'s that should be
merged in order to yield a specific $\Gamma$.

\begin{defi} \label{df:generating-set}
Let $\Gamma$ be the quotient graph corresponding to some realizable
partition of $\{s_0,\ldots,s_{|w|}\}$. We say that set of pairs
$\left\{\{s_{j_1},s_{k_1}\},\ldots,\{s_{j_r},s_{k_r}\}\right\}$
\textbf{generates} $\Gamma$, if $\Gamma$ corresponds to the finest
realizable partition in which $s_{j_i} \equiv s_{k_i}~
\forall i=1,\ldots,r$.\\
\end{defi}

\begin{example}
Let us return to the commutator word $w = g_1 g_2 g_1^{\;-1}
g_2^{\;-1}$. The trail here is \[s_0 \stackrel{g_1}{\longrightarrow}
s_1 \stackrel{g_2}{\longrightarrow} s_2
\stackrel{g_1^{\;-1}}{\longrightarrow} s_3
\stackrel{g_2^{\;-1}}{\longrightarrow} s_4.\] In Figure
\ref{fig:minimal-gps} we revisit the seven quotient graphs from
Figure \ref{fig:7graphs} (the seven graphs in ${\cal Q}_w$), and
specify a smallest generating set for each of them.

\begin{figure}[htb]
\centering
\includegraphics[width=0.8\textwidth]{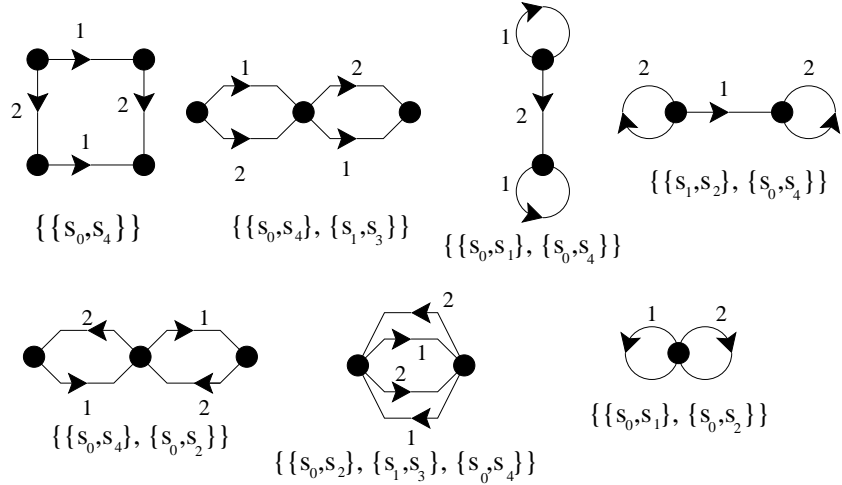}
\caption{A smallest generating set of each of the seven graphs in
${\cal Q}_w$, the set of graphs of categories of fixed points when
$w=g_1g_2g_1^{\;-1}g_2^{\;-1}$.} \label{fig:minimal-gps}
\end{figure}

\end{example}

We denote by $\chi(\Gamma) = e_\Gamma-v_\Gamma+1$
the Euler characteristic of $\Gamma$.
It turns out that there is a tight
connection between $\chi(\Gamma)$ and the smallest size of a
generating set of $\Gamma$:

\begin{lem}\label{lem:char=mgps}
Let $\Gamma$ be the quotient graph corresponding to some realizable
partition of $\{s_0,\ldots,s_{|w|}\}$. The smallest cardinality of a
generating set for $\Gamma$ is $\chi(\Gamma)$.
\end{lem}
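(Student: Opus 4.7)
The plan is to track how the Euler characteristic $\chi(\Gamma)=e_\Gamma-v_\Gamma+1$ evolves as we construct $\Gamma$ from the trail $T$ by a sequence of identifications. The trail is a path of length $|w|$, so $\chi(T)=0$, and passing to the universal graph (the finest realizable quotient of $T$) involves only \emph{forced} identifications arising from the biconditional realizability condition. I would argue that each such forced identification simultaneously merges two vertices and fuses two edges (the edge fusion being forced by the labeling constraint), so $v$ and $e$ drop in lockstep and the universal graph still has $\chi=0$.

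Next, I would analyze what happens when we impose one \emph{voluntary} identification $s_i\equiv s_j$ with $s_i,s_j$ lying in distinct current blocks. The identification itself drops $v$ by $1$ without touching $e$, contributing $+1$ to $\chi$. Cascading identifications then occur only to resolve newly created realizability violations --- namely, distinct $j$-edges sharing a head and differing in tail, or vice versa; each cascade step merges the two offending endpoints \emph{and} fuses the two conflicting edges, again preserving $\chi$. The critical point is that, because we work inside a realizable quotient at every stage, an edge collapse cannot occur without a paired vertex collapse: a ``free'' edge collapse would require two parallel edges that already violate realizability prior to the merge. Hence every non-redundant generating pair contributes exactly $+1$ to $\chi$.

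The lemma then follows from simple bookkeeping. For the lower bound, any generating set of size $r$ must raise $\chi$ from $0$ to $\chi(\Gamma)$ in unit increments (with redundant pairs contributing $0$), forcing $r\geq\chi(\Gamma)$. For the upper bound I would argue greedily: as long as the current quotient strictly refines $\Gamma$, some $\Gamma$-block splits into several current blocks, so I may pick a pair drawn from two of them; adding it to the generating set yields a non-redundant merge. Since each such step increases $\chi$ by exactly $1$ and cannot overshoot $\chi(\Gamma)$, the process terminates after precisely $\chi(\Gamma)$ steps, at which point the current partition must coincide with that of $\Gamma$.

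The chief obstacle is establishing the ``$+1$ per voluntary merge'' claim; once that is secure, everything else is essentially formal. The delicate case is ruling out the scenario in which a voluntary merge collapses several edges ``for free'' without corresponding vertex collapses --- and this is exactly where the realizability hypothesis on the intermediate quotient does its real work.
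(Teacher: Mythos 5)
Your lower-bound argument is sound and is in fact a genuinely different (and arguably cleaner) route than the paper's: the paper argues by exhibiting the explicit set $\widehat{S}$ coming from the coincidence steps of the walk of $w$ through $\Gamma$, and then proves minimality via a lexicographic-minimality argument, whereas your observation that a voluntary merge inside a realizable quotient drops $v$ by $1$ and $e$ by $0$, while each subsequent cascade step drops $v$ by $1$ and $e$ by \emph{at least} $1$, gives directly that a non-redundant pair raises $\chi$ by at most $1$, hence $r\ge\chi(\Gamma)$.

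The upper bound is where the gap lies. You claim that every greedy step ``increases $\chi$ by exactly $1$,'' relying on the assertion that each cascade step merges two vertices and fuses exactly two edges. But a single vertex-merge in the cascade can fuse \emph{several} pairs of edges simultaneously, in which case $\chi$ strictly drops during the cascade and the net effect of the voluntary pair is $0$. Your realizability argument guards against the wrong failure mode: it rules out an edge collapse with no accompanying vertex collapse, but does not rule out a vertex collapse that forces two or more edge collapses (and during the cascade the intermediate states need not be realizable, so you cannot invoke realizability to forbid this). Concretely, take $w=abab$ with trail $s_0\xrightarrow{a}s_1\xrightarrow{b}s_2\xrightarrow{a}s_3\xrightarrow{b}s_4$ and $\Gamma$ the quotient with blocks $\{s_0,s_2,s_4\}$ and $\{s_1,s_3\}$ (two vertices, one $a$-edge, one $b$-edge, $\chi(\Gamma)=1$). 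Your greedy may first pick $\{s_0,s_4\}$, yielding the $4$-cycle with $\chi=1$; picking $\{s_0,s_2\}$ next triggers the forced merge $s_1\equiv s_3$, which collapses \emph{both} the two $a$-edges and the two $b$-edges, so $v$ drops by $2$ and $e$ drops by $2$ and $\chi$ stays at $1$. The greedy then terminates after $2$ steps even though $\chi(\Gamma)=1$, and indeed the single pair $\{s_0,s_2\}$ already generates $\Gamma$. So the greedy construction does not produce a generating set of size $\chi(\Gamma)$; one must make a careful choice of pairs, which is exactly what the paper's construction of $\widehat{S}$ from the coincidence steps accomplishes.
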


\begin{proof}
It is quite easy to construct a set $\widehat{S}$ of
$\chi(\Gamma)$ pairs that generates $\Gamma$. To this end, we
adopt the original terminology of~\cite{bs}. As we follow the path
of $w$ through $\Gamma$, each move has one of three types. In a
\textbf{free} step we traverse a new edge and reach a new vertex,
and so one vertex and one edge are added to the partial graph. In
a \textbf{coincidence} a new edge leads us to an ``old'' vertex,
so we gain one new edge and no new vertices. In a \textbf{forced}
step we traverse an old edge (necessarily to an old vertex), so
the numbers of vertices and edges remain unchanged. Consequently,
$\chi(\Gamma)$ equals the number of coincidences in this walk. We
introduce into $\widehat{S}$ one pair for each coincidence. If the
$j$-th step is a coincidence in which we reach a vertex in
$\Gamma$ representing the block $s_{i_1},\ldots,s_{i_r}~(i_1 \leq
\ldots \leq i_r < j)$, we add $\{s_j,s_{i_1}\}$ to $\widehat{S}$.
Clearly, the cardinality of $\widehat{S}$ is $\chi(\Gamma)$ and it
generates $\Gamma$.

To see that $\Gamma$ has no generating set smaller than
$\widehat{S}$, consider ${\cal S}$, the collection of all
generating sets of $\Gamma$ of the smallest possible cardinality.
We claim that $\widehat{S}$ is the lexicographically first member
of ${\cal S}$. Concretely, write each pair under consideration as
$\{s_i,s_j\}$ with $i>j$. Now sort the pairs in each $S \in {\cal
S}$ in increasing lexicographic order and let $T \in {\cal S}$ be
the lexicographically first member of ${\cal S}$. Our claim is
that $T=\widehat{S}$. Observe that if $\{s_i,s_j\} \in T$ then
there is no index $h < i$ with $h \neq j$ with $\{s_i,s_h\} \in T$.
Otherwise we could replace the pair $\{s_i,s_j\}$ with the pair
$\{s_h,s_j\}$ and generate the same quotient as does $T$ with a
lexicographically smaller set of pairs.

It is helpful to consider for each $1 \leq i \leq |w|$ the graph
$\Gamma_i$ that is the quotient of $s_0 \to \ldots \to s_i$
generated by the initial segments of $T$ that includes only those
pairs in $T$ where both indices are $\le i$. We claim that
$\Gamma_{i-1}$ is a subgraph of $\Gamma_i$ for all $i$. If there
is no pair in $T$ where $s_i$ is the larger member, this is clear.
If $\{s_i,s_j\}$ is in $T$ then the index $j$ is uniquely defined
by the above remark. In this case we need to show that the
identification of $s_i$ with $s_j$ does not entail any additional
identification (which would violate the inclusion $\Gamma_{i-1}
\subseteq \Gamma_i$). How can such an identification occur?
Only if the label of the edge $(s_{i-1},s_i)$ agrees
with that of an edge $e$ incident with the block that contains
$s_j$ (and has the correct orientation). Let $s_{\nu}$ be a member
of the block at the other end of $e$. Clearly $\nu < i$. But now
again we can generate the quotient generated by $T$ by a
lexicographically smaller set, i.e., replace the pair
$\{s_i,s_j\}$ by $\{s_{i-1},s_{\nu}\}$.

We can again recognize the three types of steps by observing how
the graphs $\Gamma_i$ grow at each step. If $\Gamma_i$ stays
unchanged, this is a forced move. If only an edge is added, this
is a coincidence and in a free step one vertex and one edge are
added. It is exactly at each coincidence step that vertices at $T$
get merged. But this is precisely what we did in constructing
$\widehat{S}$, so that $\widehat{S} = T$, as claimed.

\end{proof}

The following categorization of the quotient graphs in ${\cal
Q}_w$ turns out to be very useful:

\begin{defi}\label{df:AB}
Let $w$ be a word in $\Sigma_k$. We say that a quotient graph
$\Gamma \in {\cal Q}_w$ \textbf{has type A}, if one of the
smallest generating sets for $\Gamma$ contains the pair
$\{s_0,s_{|w|}\}$. Otherwise, we say $\Gamma$ \textbf{has type B}.
\end{defi}

Given a word $w$, we classify the graphs in ${\cal Q}_w$ according
to their characteristics and type. Note that $\chi(\Gamma) \leq
|w|$, since every $\Gamma$ has at most $|w|$ edges. We illustrate
this again with the seven graphs of the commutator word: The
figure-eight graph with one vertex and two edges has type B. The
other six graphs have type A (their generating sets specified in
Figure \ref{fig:minimal-gps} purposely include $\{s_0,s_4\}$).
Table \ref{tab:chi-type} shows the whole census.

\begin{table}[htb]
\centering
\begin{tabular}[c]{|c|c|c|c|c|c|}
\hline $\chi$ & 0 & 1 & 2 & 3 & 4\\
\hline \hline Type A & 0 & 1 & 4 & 1 & 0\\
\hline Type B & 0 & 0 & 1 & 0 & 0\\
\hline
\end{tabular} \caption{The number of graphs in ${\cal Q}_w$
in any specified character and type, when
$w=g_1g_2g_1^{\;-1}g_2^{\;-1}$.} \label{tab:chi-type}
\end{table}

We are now ready for the second definition for a word's ``level of
primitivity''. Let $w$ be a word in $\Sigma_k$. We define
$\beta(w)$ to be the smallest characteristic of a type-B graph in
${\cal Q}_w$. Namely,
\begin{equation}\label{eq:def-beta}
\mathbf{\beta(w)} := \min\left( \{ \chi(\Gamma)~:~ \Gamma \in {\cal
Q}_w \textrm{ and $\Gamma$ has type B} \} \cup \{\infty\} \right)
\end{equation}

\begin{example}
For the commutator word $\beta(w)=2$.
\end{example}

In the next few lemmas we establish several properties of
$\beta(\cdot)$ which are clearly desirable. Among others,
$\beta(\cdot)$ is proved to be invariant under reductions, and
hence well defined as a function on $\mathbf{F}_k$.

\begin{lem} \label{lem:beta-invrt-cyc-shft}
$\beta(\cdot)$ is invariant under cyclic shifts.
\end{lem}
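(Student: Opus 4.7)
The plan is to exhibit a bijection between ${\cal Q}_w$ and ${\cal Q}_{w'}$ (for $w'$ a cyclic shift of $w$) that preserves both $\chi$ and the type A/B distinction. The general case reduces by iteration to a shift by one position, so write $w = a_1 a_2 \cdots a_m$ and $w' = a_2 a_3 \cdots a_m a_1$.

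The bijection comes from a natural re-indexing. Denote the $w$-trail by $s_0, s_1, \ldots, s_m$ (with the $i$-th edge labeled $a_i$ and closure $s_0 \equiv s_m$), and the $w'$-trail by $s'_0, s'_1, \ldots, s'_m$ (with the $i$-th edge labeled $a_{i+1}$ for $i \le m-1$, the $m$-th edge labeled $a_1$, and closure $s'_0 \equiv s'_m$). Identifying $s_i \leftrightarrow s'_{i-1}$ for $i = 1, \ldots, m$ matches the first $m-1$ edges of the two trails directly, while the last edge of each matches the other's first edge once the respective closures are invoked. The realizability constraints on the two trails therefore correspond, yielding a bijection between the partitions underlying ${\cal Q}_w$ and ${\cal Q}_{w'}$; corresponding quotient graphs $\Gamma$ and $\Gamma'$ are isomorphic as edge-colored graphs, so $\chi(\Gamma) = \chi(\Gamma')$.

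For type preservation, suppose $\Gamma \in {\cal Q}_w$ is type A, witnessed by a minimal generating set $S$ containing $\{s_0, s_m\}$. First normalize $S$ so that no pair in $S_0 := S \setminus \{\{s_0, s_m\}\}$ involves $s_0$: any offending pair $\{s_0, s_j\}$ can be replaced by $\{s_m, s_j\}$, which is equivalent modulo the identification $s_0 \equiv s_m$ supplied by the remaining $\{s_0, s_m\} \in S$, preserving both minimality and the generated partition. Translate $S_0$ via $s_i \leftrightarrow s'_{i-1}$ to obtain $\mathrm{tr}(S_0) \subseteq \{s'_0, \ldots, s'_{m-1}\}$ and set $S' := \mathrm{tr}(S_0) \cup \{\{s'_0, s'_m\}\}$. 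Then $|S'| = |S| = \chi(\Gamma) = \chi(\Gamma')$; and once either closure pair is present, the $w$- and $w'$-realizabilities both coincide with the canonical ``cyclic'' realizability on the underlying $m$-element cycle of letters, under which $S_0$ and $\mathrm{tr}(S_0)$ correspond. Thus $S'$ generates the partition underlying $\Gamma'$, so $\Gamma'$ is type A; the converse follows by the inverse cyclic shift.

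The main technical point that needs care is verifying that the translation of $S_0$ together with the replacement of the $w$-closure pair by the $w'$-closure pair yields a genuine generating set on the $w'$ side. The subtlety is that the two closure pairs, while playing analogous ``loop-closing'' roles, are not themselves related by the index-shift bijection (each looks trivial on the opposite side); the resolution is that adding either closure reduces both trail realizabilities to the same cyclic realizability, through which the generation transfers cleanly. Since the bijection then preserves $\chi$ and maps type-B graphs bijectively to type-B graphs, we conclude $\beta(w) = \beta(w')$.
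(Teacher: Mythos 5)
Your proposal is correct and uses essentially the same strategy as the paper's proof: a cyclic re-indexing bijection between ${\cal Q}_w$ and ${\cal Q}_{w'}$, followed by observing that once the closure pair is merged both sides reduce to the same cyclic structure, so the translated generating set witnesses type A on the other side. Your normalization step (removing $s_0$ from the remaining pairs before translating) makes explicit a point the paper glosses over when it says to "apply the same series of quotients" on both universal graphs.
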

\begin{proof}
Let $w \in \Sigma_k$, and let $w'$ be some cyclic shift of $w$.
There is an obvious bijection between ${\cal Q}_w$ and ${\cal
Q}_{w'}$, obtained by applying the appropriate cyclic shift on the
indices of the $s_i$'s in the blocks' names. (E.g, if $w'$ is
attained from $w$ by a right cyclic shift of two positions, replace
each label $s_i$ in $w$ by $s'_j$ in $w'$ where
$j \equiv i+2 \mod |w|$.) We claim that in addition, each $\Gamma \in
{\cal Q}_w$ has the same type in ${\cal Q}_w$ as its matching
quotient $\Gamma'$ in ${\cal Q}_{w'}$. It suffices to show that if
$\Gamma$ has type A in ${\cal Q}_w$, then $\Gamma'$ have type A in
${\cal Q}_{w'}$ (It then follows by symmetry that $\Gamma$ has type A
in ${\cal Q}_w$ iff $\Gamma'$ has type A in ${\cal Q}_{w'}$).

Let $S$ be a smallest generating set of $\Gamma$ with
$\{s_0,s_{|w|}\} \in S$. Given $S$, we can generate $\Gamma$
gradually, through a series of quotients. To proceed from the
quotient $\Gamma_i$ to the next quotient, we add a pair $\{s_j,
s_r\} \in S$. To determine $\Gamma_{i+1}$ we carry out all
necessary identifications and only them. Formally, $\Gamma_{i+1}$
is the finest realizable quotient of $\Gamma_i$ in which the pair $\{s_j,
s_r\}$ is merged. It is easily verified that the final quotient is
$\Gamma$ regardless of the order at which the pairs in $S$ are
introduced. But merging $\{s_0,s_{|w|}\}$ in $w$, and merging
$\{s'_0,s'_{|w|}\}$ in $w'$, yield equivalent quotient graphs
(these are the universal graphs of $w$ and of $w'$, as defined in
Section \ref{sbs:exp-fix-points} around Figure
\ref{fig:univ-graphs}). We can now proceed by applying the same
series of quotients as described above on both universal graphs,
to conclude that $\Gamma'$ is of type A with respect to $w'$ as
well.
\end{proof}

\begin{lem}\label{lem:beta-invrt-red}
Let $w \in \Sigma_k$, and
let $w'$ be its reduced form. Then $\beta(w) = \beta(w')$.
\end{lem}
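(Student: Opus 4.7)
Plan:

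By induction on the number of reductions, it suffices to handle a single cancellation $w = u \cdot g_i^\alpha g_i^{-\alpha} \cdot v$ and $w' = u v$. Using Lemma~\ref{lem:beta-invrt-cyc-shft} I may first cyclically shift $w$ so that the cancelled pair is strictly internal, at positions $j$ and $j+1$ with $1 \le j \le |w|-1$.

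The geometric pivot is that in any realizable partition of $w$'s trail, the folding (realizability) condition at $s_j$---which receives two $g_i$-labeled edges, from $s_{j-1}$ and from $s_{j+1}$---forces $s_{j-1} \equiv s_{j+1}$. This enables two complementary maps. The projection $\Psi : {\cal Q}_w \to {\cal Q}_{w'}$ deletes $s_j$ from its block (erasing the block if $s_j$ was alone) and relabels the remaining points by $s_h \mapsto s'_h$ for $h < j$ and $s_h \mapsto s'_{h-2}$ for $h > j+1$. The canonical section $\Psi^\ast : {\cal Q}_{w'} \to {\cal Q}_w$ places $s_j$ in the unique block dictated by folding at $B_{j-1}$: if $B_{j-1}$ already has a $g_i$-edge of the right sign to some block $B$ in $\Gamma'$, then folding forces $s_j \in B$; otherwise $s_j$ goes to a fresh singleton block. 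A direct case check yields $\chi(\Psi^\ast(\Gamma')) = \chi(\Gamma')$ in both cases (in the singleton case $v$ and $e$ each rise by $1$; in the forced-merge case both are unchanged), and $\Psi \circ \Psi^\ast = \mathrm{id}$.

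Type-preservation under $\Psi^\ast$ follows from Lemma~\ref{lem:char=mgps}: a smallest generating set $S'$ for $\Gamma'$ lifts (via $s'_h \mapsto s_h$ for $h < j$ and $s'_h \mapsto s_{h+2}$ for $h \ge j$) to a generating set for $\Psi^\ast(\Gamma')$ of the same cardinality, because the forced placement of $s_j$ is produced automatically by the folding relations imposed by $S'$. The closing pair $\{s'_0, s'_{|w|-2}\}$ lifts exactly to $\{s_0, s_{|w|}\}$, so $\Gamma'$ is type-A iff $\Psi^\ast(\Gamma')$ is type-A.

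It remains to show that the minimum of $\chi$ over type-B graphs in ${\cal Q}_w$ is attained within the image of $\Psi^\ast$. Within each fiber $\Psi^{-1}(\Gamma')$, any non-canonical placement of $s_j$ in a block $B$ not dictated by folding either violates realizability at $B_{j-1}$ or $B$ (so $\Gamma \notin {\cal Q}_w$) or the identification $s_j \equiv s_h$ for some $s_h \in B$ is not implied by the other constraints---in which case an extra generating pair is required and $\chi(\Gamma) = \chi(\Gamma')+1$. Thus the unique minimizer in each fiber is $\Psi^\ast(\Gamma')$, with the same type, so $\beta(w) = \beta(w')$. The main obstacle I anticipate is precisely this last step: one has to verify case-by-case that non-canonical placements of $s_j$ either violate folding or strictly inflate $\chi$ (rather than inducing a cascade of identifications that places $\Gamma$ in a different fiber), which comes down to a sign-by-sign tracking of the realizability relations between positions $j, j+1$ and any other position carrying letter $g_i$, using the fact that $s_{j-1} \equiv s_{j+1}$ is always forced.
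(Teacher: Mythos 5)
Your proposal is structurally the same as the paper's: normalize the cancellation's position with a cyclic shift, set up a section $\Psi^\ast$ and projection $\Psi$ between the two quotient sets, and track Euler characteristic and type through the fibers. (The paper places the cancelled pair at the \emph{end} of the word rather than internally, which spares the relabelling bookkeeping, but that difference is cosmetic.) The difficulty is that the type-tracking you supply is not enough to close the argument, in two places.

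First, for $\beta(w)\leq\beta(w')$ (with your convention that $w$ is the longer word) you need: $\Gamma'$ type B $\Rightarrow \Psi^\ast(\Gamma')$ type B, i.e.\ $\Psi^\ast(\Gamma')$ type A $\Rightarrow \Gamma'$ type A. But what you actually argue is the reverse direction: a smallest generating set $S'$ of $\Gamma'$ containing the closing pair \emph{lifts} to one for $\Psi^\ast(\Gamma')$, which proves $\Gamma'$ type A $\Rightarrow \Psi^\ast(\Gamma')$ type A and nothing more. The ``iff'' you assert requires the converse \emph{projection} argument, which the paper carries out explicitly (including the case where the deleted label appears in a pair of the generating set for $\Psi^\ast(\Gamma')$, which must then be rerouted to an older label because that edge is not new).

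Second, and more importantly, your concluding step---``the unique minimizer in each fiber is $\Psi^\ast(\Gamma')$, with the same type, so $\beta(w)=\beta(w')$''---does not follow. For $\beta(w')\leq\beta(w)$ you need that $\Gamma$ type B $\Rightarrow \Psi(\Gamma)$ type B for \emph{every} $\Gamma$ in the fiber, not just for $\Gamma=\Psi^\ast(\Gamma')$. With only what you have, a non-canonical type-B $\Gamma$ could achieve $\beta(w)$ while projecting to a type-A $\Gamma'$, and then there is no type-B witness in ${\cal Q}_{w'}$ at the right characteristic. The paper proves the required stronger property (its third bullet): if $\Gamma'$ is type A, then \emph{all} members of the fiber are type A. The proof lifts the smallest generating set of $\Gamma'$ (which contains the closing pair) and, for a non-canonical fiber member, appends the single extra pair $\{s_j,s_i\}$ witnessing the non-canonical placement of $s_j$; this gives a smallest generating set for the non-canonical quotient that still contains the closing pair. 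Your observation that ``an extra generating pair is required'' is exactly the right one for the $\chi$-inflation, but you also need to observe that the extra pair can be chosen so as to keep the closing pair in the generating set, which is what delivers the missing type statement.
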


\begin{proof}
We need to show that $\beta$ does not change when a letter and its
inverse are inserted consecutively into a word. But Lemma
\ref{lem:beta-invrt-cyc-shft} says that $\beta$ is invariant under
cyclic shifts, so it suffices to show that $\beta(w)=\beta(w')$
for $w=g_{i_1}^{\;\alpha_1}g_{i_2}^{\;\alpha_2}\ldots
g_{i_m}^{\;\alpha_m}$ and
$w'=g_{i_1}^{\;\alpha_1}g_{i_2}^{\;\alpha_2}\ldots
g_{i_m}^{\;\alpha_m}g_j g_j^{\;-1}$. To see this, we define below
for every quotient $\Gamma \in {\cal Q}_{w}$ the subset
$\epsilon(\Gamma) \subseteq {\cal Q}_{w'}$ of all consistent
extensions of $\Gamma$. The set $\epsilon(\Gamma)$ contains a
certain member $\delta(\Gamma)$ which plays a special role. The
relevant properties of $\epsilon$ and $\delta$ are:
\begin{itemize}
\item
The union of the images of $\epsilon$ is all of ${\cal Q}_{w'}$.
\item
If $\Gamma_1 \neq \Gamma_2 \in {\cal Q}_{w}$, then
$\epsilon(\Gamma_1)$ and $\epsilon(\Gamma_2)$ are disjoint.
\item
If $\Gamma \in {\cal Q}_{w}$ has type A, then all members in
$\epsilon(\Gamma)$ have type A.
\item
For every $\Gamma \in {\cal Q}_{w}$, one graph $\delta(\Gamma) \in
\epsilon(\Gamma)$ has the same Euler characteristic and the same
type as $\Gamma$. All other members in $\epsilon(\Gamma)$ have
Euler characteristic $\chi(\Gamma)+1$.
\end{itemize}
It should be clear that these properties prove the lemma.

If $v \in V(\Gamma)$ is the vertex corresponding to $s_m$, then
$\epsilon(\Gamma)$ is the set of all extensions of $\Gamma \in
{\cal Q}_w$ where there is a $j$-edge (an edge labeled $j$)
starting at $v$. If $\Gamma$ already has such an edge, then we can
attain a graph $\Gamma' \in {\cal Q}_{w'}$ by adding $s_{m+2}$ to
the block containing $s_m$, and adding $s_{m+1}$ to the block at
the end of this $j$-edge. We then define $\epsilon(\Gamma) = \{
\delta(\Gamma) \} = \{ \Gamma'\} \approx \{\Gamma\}$ (We use
``$\approx$'' to denote equality as vertex-unlabeled-graphs.)

Otherwise, $\epsilon(\Gamma)$ includes all the
$(v_\Gamma-e_\Gamma^j+1)$ different possible extensions of
$\Gamma$ with such an edge. We only need to specify the other
vertex of this new edge, that corresponds to $s_{m+1}$. In the
graph $\delta(\Gamma)$ the vertex $s_{m+1}$ is new and so is the
$j$-edge $(s_m, s_{m+1})$.
Clearly, $\chi(\delta(\Gamma))=
\chi(\Gamma)$, as claimed. Otherwise this additional edge can go
from $s_m$ to any of the $v_\Gamma-e_\Gamma^j$ vertices in
$\Gamma$ which are not tails of a $j$-edge. Such graphs clearly
have characteristics $\chi(\Gamma)+1$.

We prove the first two properties of $\epsilon$ by recovering, for
every $\Gamma' \in {\cal Q}_{w'}$ the (unique) graph $\Gamma \in
{\cal Q}_{w}$ with $\Gamma' \in \epsilon(\Gamma)$. We consider the
$(m+1)$-st step in the path of $w'$ through $\Gamma'$ (the step
from $s_m$ to $s_{m+1}$), and use the notations of Lemma
\ref{lem:char=mgps}. If this step is free, then $\Gamma$ is
obtained from $\Gamma'$ by deleting the vertex corresponding to
$s_{m+1}$ and the edge $(s_m,s_{m+1})$ (as well as, of course,
omitting $s_{m+2}$ from its block). If it is a coincidence, then
clearly $\Gamma \approx \Gamma' \setminus (s_m, s_{m+1})$.
Otherwise, it is forced and so $\Gamma \approx \Gamma'$.

We want to show next that if $\Gamma \in {\cal Q}_w$ has type A,
then all graphs in $\epsilon(\Gamma)$ have type A as well. By
assumption $\Gamma$ is generated by a set $S$ of cardinality
$|S|=\chi(\Gamma)$ and $\{s_0,s_m\} \in S$. Note that $s_m \equiv
s_{m+2}$ in \emph{every} quotient of $w'$. Therefore, when we
consider generating sets for graphs in ${\cal Q}_{w'}$, the
vertices $s_m$ and $s_{m+2}$ play the exact same role. We
therefore define $S'$ to be the set of pairs that is attained by
replacing each occurrence of $s_m$ in $S$ with $s_{m+2}$. Clearly,
$S'$ generates the graph $\delta(\Gamma) \in \epsilon(\Gamma)$.
For any other graph in $\epsilon(\Gamma)$, we add to $S'$ the pair
$\{s_{m+1},s_i\}$ where $s_i~(i \leq m)$ corresponds to the vertex
which $(s_m,s_{m+1})$ goes to. In each of these cases we found a
smallest generating set $S'$ that includes the pair
$\{s_0,s_{m+2}\}$, so all members of $\epsilon(\Gamma)$ have type
A.

Finally, we need to show that if $\Gamma \in {\cal Q}_w$ has type
B, then so does $\delta(\Gamma)$. So suppose $\delta(\Gamma)$ has
type A, with a smallest generating set $S'$ that contains the pair
$\{s_0,s_{m+2}\}$. Let us construct a set of pairs $S$ by
replacing each occurrence of $s_{m+2}$ in $S'$ by $s_m$. If $S'$
contains some pair $\{s_{m+1},s_i\}$, then clearly $s_{m+1}$ is
not a new vertex in $\delta(\Gamma)$, and we are necessarily in
the case where $\Gamma \approx \delta(\Gamma)$ (recall that
$\Gamma$ and $\delta(\Gamma)$ have the same characteristic). In
this case the edge $(s_m,s_{m+1})$ is not new, so it is merged
with some $(s_{r-1},s_r)$ (or $(s_{r+1},s_r)$) for some $r<m$.
Thus, we can replace each $s_{m+1}$ in $S'$ with $s_r$. This is a
contradiction since $S$ is a smallest generating set of $\Gamma$
which therefore has type A.
\end{proof}

Note that from Lemmas \ref{lem:beta-invrt-cyc-shft} and
\ref{lem:beta-invrt-red} it follows that $\beta$ is invariant
under \emph{cyclic} reduction as well, or under conjugation.
Similar arguments show that it is also invariant under the
equivalence relation $``\sim"$, but we do not include the proof.
In the following lemma we state an important property of type-B
quotient graphs. This property plays a crucial role in the sequel,
where we introduce a bound to the number of words with some fixed
value of $\beta(\cdot)$.

\begin{lem}\label{lem:2-preimages}
Let $\Gamma \in {\cal Q}_w$ have type B. As we trace the path of
$w$ through $\Gamma$, every edge in $\Gamma$ is traversed at least
twice.
\end{lem}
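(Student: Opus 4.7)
The plan is to prove the contrapositive: if some edge $e$ of $\Gamma$ is traversed only once by the walk of $w$ through $\Gamma$, then $\Gamma$ has type A. The key idea is to exploit cyclic-shift invariance (Lemma~\ref{lem:beta-invrt-cyc-shft}) to rotate $w$ and reduce to a particularly clean configuration.

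Concretely, suppose $e$ is traversed only at step $j$. First I would apply a cyclic shift of $w$ to obtain a word $w'$ for which the matching quotient graph $\Gamma' \in {\cal Q}_{w'}$ (under the natural bijection of Lemma~\ref{lem:beta-invrt-cyc-shft}) has the same type as $\Gamma$, and for which the edge $e$ is traversed at step $|w|$ of the walk through $w'$. It therefore suffices to show that $\Gamma'$ has type A. Analyzing step $|w|$ of this walk, it cannot be a free step, because $s_{|w|}' \equiv s_0'$ in $\Gamma'$ and the block of $s_0'$ has already appeared in the partial graph after $|w|-1$ steps; and it cannot be a forced step, because $e$ appears for the first (and only) time at step $|w|$, so it is not yet present in the partial graph. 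Hence step $|w|$ is a coincidence, landing at a vertex of $\Gamma'$ whose block contains $s_0'$.

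By the construction of the canonical minimal generating set $\widehat{S}'$ in the proof of Lemma~\ref{lem:char=mgps}, the pair added to $\widehat{S}'$ at step $|w|$ is $\{s_{|w|}', s_{i_1}'\}$, where $s_{i_1}'$ is the smallest-indexed element with index less than $|w|$ in that block. Since $s_0'$ lies in the block and $0$ is the smallest possible index, we have $i_1 = 0$, so the pair is exactly $\{s_0', s_{|w|}'\}$. This exhibits $\widehat{S}'$ as a smallest generating set of $\Gamma'$ containing $\{s_0', s_{|w|}'\}$, so $\Gamma'$ has type A and therefore so does $\Gamma$. I do not anticipate a major obstacle: once one notices the cyclic-shift reduction, the rest follows directly from the definitions. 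The only subtlety is that the ``block'' referenced in the canonical construction of $\widehat{S}'$ is the \emph{final} block of $\Gamma'$, which is what guarantees that $s_0'$ counts among the relevant $s_i'$ with $i<|w|$ even in the degenerate case where $s_0'$ only gets merged into the block of $s_{|w|}'$ through the coincidence at step $|w|$ itself.
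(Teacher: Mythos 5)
Your proof is correct and follows essentially the same route as the paper's: apply the cyclic-shift invariance of Lemma~\ref{lem:beta-invrt-cyc-shft} to move the once-traversed edge to the last step, observe that this final step must be a coincidence, and then invoke the canonical generating set $\widehat{S}$ from Lemma~\ref{lem:char=mgps}, in which the coincidence at step $|w|$ contributes exactly the pair $\{s_0, s_{|w|}\}$. You spell out the case analysis (not free, not forced) and the index-minimality reasoning more explicitly than the paper's terse version, but the argument is the same.
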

\begin{proof}
We show that if some edge $e$ is traversed only once, then
$\Gamma$ has type A. Lemma \ref{lem:beta-invrt-cyc-shft} allows us
to assume that $e$ is the last step in the path of $w$, i.e. the
step from $s_{|w|-1}$ to $s_{|w|}$. In the proof of Lemma
\ref{lem:char=mgps}, we constructed $\widehat{S}$, a generating
set of $\Gamma$ of smallest cardinality, with one pair for each
coincidence in the path of $w$ through $\Gamma$. Here, the last
coincidence corresponds to the pair $\{s_0,s_{|w|}\}$. Therefore
$\Gamma$ has type A.
\end{proof}

\begin{remark}
The converse is not true. There are quotients of type A where
every edge is traversed more than once. Consider the word
$w=ababa$. One of the quotient graphs in ${\cal Q}_w$ is a
figure-eight with one vertex and two loops. Each edge in this
quotient is traversed twice or thrice, but the quotient has type
A. It is generated by the two pairs $ \{s_0,s_2\},\{s_0,s_5\}$.
\end{remark}

\subsection{Some Connections between $\phi(w)$ and $\beta(w)$}
\label{sbs:phi=beta}

We now turn to examine the relation between $\phi$ and $\beta$. We
first observe that $a_i(w)$ (the coefficient of $\frac{1}{n^i}$ in
the power series form of $\Phi_w(n)$), is completely determined by
quotients in ${\cal Q}_w$ with characteristic $\leq i$. This is
easily verified by considering the contribution of each $\Gamma
\in {\cal Q}_w$ in \eqref{eq:Phi-series}. We are now able to use
our new perspective and show that (as mentioned above) for
$i=0,1$, $\phi(w)=i \Leftrightarrow \beta(w)=i$.

\begin{lem} \label{lem:phi=beta=0}
For every $w \in \mathbf{F}_k$, $\phi(w)=0 \Leftrightarrow
\beta(w)=0 \Leftrightarrow w=1$.
\end{lem}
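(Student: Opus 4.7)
The plan is to prove both equivalences by (i) verifying the easy direction $w=1 \Rightarrow \phi(w)=\beta(w)=0$ and (ii) showing the reverse by proving that if $w$ has nonempty reduced form, then ${\cal Q}_w$ contains no quotient of Euler characteristic $0$ --- no ``tree'' quotient. Claim (ii) is the heart of the matter: by formula~\eqref{eq:Phi-series}, only quotients with $\chi(\Gamma)=0$ contribute to $a_0(w)$, so their absence yields $a_0(w)=0$ and hence $\phi(w)\geq1$; and the absence of \emph{any} $\chi=0$ quotient trivially precludes a type-B one of that characteristic, so $\beta(w)\geq1$.

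For the easy direction, if $w=1$ then $w(\sigma_1,\ldots,\sigma_k)$ is the identity permutation for every choice of $\sigma_i$'s, so $X_w^{(n)}\equiv n$ and $\Phi_w(n)=1-\frac{1}{n}$, giving $a_0(w)=1$ and $\phi(w)=0$. In this case ${\cal Q}_w$ contains only the single-vertex, no-edge quotient, which has $\chi=0$ and is vacuously of type B --- its unique smallest generating set is the empty set, which does not contain the degenerate pair $\{s_0,s_{|w|}\}=\{s_0,s_0\}$ --- so $\beta(w)=0$.

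For the main direction, by invariance of $\phi$ and $\beta$ under reduction (Lemma~\ref{lem:beta-invrt-red} and the remarks following~\eqref{eq:phi}), we may take $w\in\Sigma_k$ reduced with $|w|\geq1$. Suppose toward contradiction that $\Gamma\in{\cal Q}_w$ is a tree, and consider the associated closed walk $s_0\to s_1\to\cdots\to s_{|w|-1}\to s_0$. A tree has no loops; hence if $|w|=1$ the sole edge $s_0\to s_0$ would be a loop, a contradiction, while if $|w|\geq 2$ not all $s_i$'s can coincide with $s_0$, so some $s_i$ with $1\leq i\leq|w|-1$ attains the (positive) maximum graph-distance from $s_0$ in $\Gamma$. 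Its two cyclic neighbors in the walk, $s_{i-1}$ and $s_{i+1\bmod|w|}$, lie at strictly smaller depth, and in a tree each non-root vertex has a unique neighbor closer to the root; therefore $s_{i-1}=s_{i+1\bmod|w|}$. Consequently the edges traversed at steps $i$ and $i+1$ coincide with opposite orientation, forcing the letters of $w$ at adjacent positions $i$ and $i+1$ to be formal inverses, contradicting the reducedness of $w$.

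The main obstacle is setting up this walk-on-a-tree argument cleanly: handling the small cases ($|w|=1$ and the walk being stationary) separately and arranging matters so that a max-depth vertex can always be chosen from $\{s_1,\ldots,s_{|w|-1}\}$. This placement guarantees that the forced backtracking lies at two \emph{adjacent} (non-cyclic) positions of $w$, so that it contradicts plain reducedness without any appeal to cyclic reduction or to conjugation invariance.
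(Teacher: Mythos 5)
Your proof is correct and follows the same high-level plan as the paper's: read off from~\eqref{eq:Phi-series} that $a_0(w)$ counts $\chi=0$ quotients in ${\cal Q}_w$, show such a quotient exists iff $w$ reduces to the empty word, and observe that when it exists it is automatically type B (its unique smallest generating set is $\emptyset$). Where you diverge is in the key sub-claim that ${\cal Q}_w$ has no $\chi=0$ quotient for reduced $w\ne 1$. The paper gets this almost for free by citing Lemma~\ref{lem:char=mgps}: the only possible $\chi=0$ quotient is the one generated by the empty set (the finest realizable partition of the open trail), and it lies in ${\cal Q}_w$ precisely when $s_0\equiv s_{|w|}$ is forced, i.e.\ precisely when $w$ reduces to $1$. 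You instead give a self-contained contradiction argument: rooting the hypothetical tree quotient at $s_0$, choosing a deepest vertex $s_i$ with $1\le i\le |w|-1$, and using the unique-parent property of trees to force $s_{i-1}=s_{(i+1)\bmod|w|}$, whence the letters at the two adjacent positions $i,i+1$ must be formal inverses. This is valid, arguably more elementary (no appeal to the lexicographic machinery behind Lemma~\ref{lem:char=mgps}), and your care in placing the max-depth vertex strictly between $s_0$ and $s_{|w|}$ correctly avoids needing cyclic reduction. One small imprecision: in the easy direction you say ``${\cal Q}_w$ contains only the single-vertex, no-edge quotient,'' which is literally true only for the length-zero representative of $w=1$; this is harmless since one may always pass to the reduced representative (by the invariance under reduction you invoke in the other direction), but it should be made explicit.
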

\begin{proof}
By Lemma \ref{lem:char=mgps}, the only quotient graph of $w$ with
characteristic 0 is the graph $\Gamma$ generated by the empty set.
Now $\phi(w)=0 ~ \Leftrightarrow ~ a_0(w)>0 ~ \Leftrightarrow ~
\Gamma \in {\cal Q}_w ~ \Leftrightarrow s_0 \equiv s_{|w|} \textrm{
in } \Gamma \Leftrightarrow w$ reduces to 1. If $\Gamma \in {\cal
Q}_w$ then $\Gamma$ has type B by definition. Thus $\beta(w)=0
\Leftrightarrow \Gamma \in {\cal Q}_w$.
\end{proof}

\begin{lem} \label{lem:phi=beta=1}
For every $w \in \mathbf{F}_k$, $\phi(w)=1 \Leftrightarrow
\beta(w)=1 \Leftrightarrow$ $w$ is imprimitive.
\end{lem}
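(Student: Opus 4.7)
My plan is to establish each of $\phi(w)=1$ and $\beta(w)=1$ as equivalent to $w$ being imprimitive by routing through the following structural claim: for $w$ cyclically reduced, a quotient $\Gamma\in\mathcal{Q}_w$ has $\chi(\Gamma)=1$ if and only if $\Gamma$ is a directed labeled cycle on which the walk of $w$ winds a fixed number of times $m\ge1$ in a fixed direction, and such cycle-quotients are in bijection with the divisors of the largest $d$ with $w=v^d$ (where $v\in\mathbf{F}_k$ is primitive). Both $\beta$ and $\phi$ are invariant under cyclic reduction---for $\beta$ by Lemmas~\ref{lem:beta-invrt-cyc-shft} and~\ref{lem:beta-invrt-red}; for $\phi$ because conjugation is an inner automorphism and so lies in the orbit defining $\sim$---so assuming $w$ cyclically reduced is harmless.

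To prove the structural claim, note that $\chi(\Gamma)=1$ means $e_\Gamma=v_\Gamma$. Since $w$ is reduced, the closed walk through $\Gamma$ cannot immediately traverse an edge and then its inverse; in particular $\Gamma$ has no degree-one vertex, for a leaf would force exactly such a backtrack on its unique incident edge. Hence every vertex has undirected degree $\ge 2$, and the handshake identity together with $e_\Gamma=v_\Gamma$ forces every degree to equal exactly $2$. A connected graph of constant degree $2$ is a single cycle, and reducedness then forces the walk to traverse this cycle in a single fixed direction; so $|w|=me_\Gamma$ for some $m\ge1$ and reading off one circuit of labels gives a word $u$ with $w=u^m$. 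Conversely, if $w=v^d$ with $v$ primitive, then---since centralizers in $\mathbf{F}_k$ are cyclic---every factorization $w=u^m$ arises from a divisor $m\mid d$ via $u=v^{d/m}$, and each divisor produces a distinct realizable cycle-quotient of length $d|v|/m$ by the identification $s_i\equiv s_{i+|u|}$ for all $i$. Hence $|\{\Gamma\in\mathcal{Q}_w:\chi(\Gamma)=1\}|=\tau(d)$.

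The two equivalences now follow readily. For $\phi$: since $w\neq 1$ there are no $\chi=0$ quotients, so the per-$\Gamma$ expansion in \eqref{eq:Phi-series} yields $a_1(w)=-1+\tau(d)$; hence $\phi(w)=1\Leftrightarrow a_1(w)\neq 0\Leftrightarrow d\ge 2\Leftrightarrow w$ is imprimitive. For $\beta$: if $w=v^d$ with $d\ge 2$, the cycle corresponding to the divisor $m=1$ (of length $|v|$) is distinct from the universal graph of $w$ (of length $d|v|$); since $\{s_0,s_{|w|}\}$ alone generates only the universal graph, the shorter cycle is of type B, giving $\beta(w)\le 1$, and $\beta(w)\neq 0$ by Lemma~\ref{lem:phi=beta=0}. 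Conversely, if $\beta(w)=1$, take a type-B $\Gamma\in\mathcal{Q}_w$ with $\chi(\Gamma)=1$; by the structural claim it is a cycle, and by Lemma~\ref{lem:2-preimages} every edge is traversed at least twice, forcing $m\ge 2$ and $w=u^m$ imprimitive.

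The main obstacle is the careful execution of the ``no leaves $\Rightarrow$ cycle'' step, taking into account the directed labeled multigraph setting and the realizability constraints; in particular one must verify that the labels around the cycle combine into a well-defined $u\in\mathbf{F}_k$ with $w=u^m$ literally as strings, which follows from cyclic reducedness together with the fact that the walk cannot reverse its direction around the cycle without backtracking. A secondary subtle point is that distinct divisors $m\mid d$ give genuinely distinct quotient graphs; this is immediate from the vertex count $|u|=d|v|/m$, but one must also check that each such identification $s_i\equiv s_{i+|u|}$ respects the realizability propagation rules, which again follows from $w$ being literally $u^m$.
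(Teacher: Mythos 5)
Your argument is correct and follows essentially the same route as the paper: reduce to cyclically reduced $w$, show that $\chi(\Gamma)=1$ forces $\Gamma$ to be a cycle (every vertex has degree $\geq 2$ by reducedness, and $e_\Gamma=v_\Gamma$ then forces degree exactly $2$), observe that the walk of $w$ winds around it consistently some $m\ge 1$ times, and read off imprimitivity from a non-universal such cycle. Two small remarks: in your own parametrization the divisor $m=1$ gives $u=v^{d}=w$ (the universal graph, of length $d|v|$), while the cycle of length $|v|$ corresponds to $m=d$, so the phrase ``the cycle corresponding to the divisor $m=1$ (of length $|v|$)'' has the indices swapped (the conclusion is unaffected); and the exact count $|\{\Gamma:\chi(\Gamma)=1\}|=\tau(d)$ via the centralizer argument, while correct, is more than the equivalence requires --- the paper contents itself with the observation that an extra $\chi=1$ quotient exists iff $w$ is imprimitive.
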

\begin{proof}
Let $w\in \mathbf{F}_k$ be in reduced form and assume $w \neq 1$.
By Lemma \ref{lem:phi=beta=0}, all $\Gamma \in {\cal Q}_w$ have a
positive characteristic. The definition of $\Phi_w(n)$ clearly
yields that $a_1(w) = \left| \{\Gamma \in {\cal
Q}_w~:~\chi(\Gamma)=1\}\right| - 1$. In this case the single pair
$\{s_0,s_{|w|}\}$ is a smallest generating set for the universal
graph (defined in Section \ref{sbs:exp-fix-points}), so at least
one quotient has characteristic 1. Obviously, any other quotient
with $\chi=1$ is not generated by $\{s_0,s_{|w|}\}$, and thus (by
Lemma \ref{lem:char=mgps}) has
type B. Thus $\phi(w)=1 \Leftrightarrow \beta(w)=1
\Leftrightarrow$ such additional quotients exist. We complete the
proof by showing that the latter is true iff $w$ is imprimitive.

Let $w'$ be the cyclic reduction of $w$. It is easy to verify that
$w \sim w'$ whence $\phi(w)=\phi(w')$ and $w$ is primitive iff so
is $w'$. Lemmas \ref{lem:beta-invrt-cyc-shft} and
\ref{lem:beta-invrt-red} yield that $\beta(w)=\beta(w')$ as well.
Thus we can assume, for simplicity, that $w$ is cyclically
reduced.

In this case, merging $s_0$ and $s_{|w|}$ implies no other
identifications, and the universal graph is a cycle of length
$|w|$. If $w$ is imprimitive, there is some $u \in \mathbf{F}_k$
and $d\geq 2$ such that $w=u^d$. Clearly, $u$ is cyclically
reduced as well, and the universal graph of $u$ is a cycle of
length $|u|$ which is an additional quotient of characteristic 1
in ${\cal Q}_w$.

On the other hand, since $w$ is cyclically reduced, every vertex
in every $\Gamma \in {\cal Q}_w$ has degree $\geq 2$. Thus, if
$\chi(\Gamma)=1$, it is necessarily a cycle. As the path of $w$
through $\Gamma$ is non-backtracking and $w\neq1$, it consists of
tracing this cycle some $d\geq 1$ times. If $d=1$, $\Gamma$ is the
universal graph. Otherwise, if we let $u$ denote the word
corresponding to a single traversal of the cycle, then $w=u^d$.
\end{proof}

The contents of Lemmas \ref{lem:phi=beta=0} and
\ref{lem:phi=beta=1} appear in different language in \cite{bs},
in \cite{nica} and in \cite{fri03}. But the relation between
$\phi(\cdot)$ and $\beta(\cdot)$ goes deeper. For instance, for
the single-letter word $w=a$ both $\phi(w) = \beta(w) = \infty$.
The reason for $\phi(a)=\infty$ is that the expected number of
fixed points in a random permutation equals 1. On the other hand,
Lemma \ref{lem:2-preimages} implies that $\beta(a)=\infty$. Also,
as already mentioned, both $\phi$ and $\beta$ are invariant under
$``\sim"$, so they are both infinite on the entire equivalence
class of $a$ under $``\sim"$. (In particular, every $w$ in which
some letter appears exactly once belongs to this class.)

The following lemma expands even further the relation between
$\phi(\cdot)$ and $\beta(\cdot)$. The next natural step would be
to prove that $\phi(w)=2 \Leftrightarrow \beta(w)=2$. This, in
other words, says that for primitive words, $\beta(w) \geq 3$ iff
$a_2(w) = 0$. This is, at present, still beyond our reach and we
content ourselves with a weaker statement.

\begin{lem} \label{lem:phi approx beta=2}
Let $w \in \mathbf{F}_k$ have $\beta(w) \geq 3$. Then $a_2(w) \leq 0$.
\end{lem}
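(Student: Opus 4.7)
The plan is to read $a_2(w)$ off from the power-series form \eqref{eq:Phi-series} of $\Phi_w$ and then reduce the lemma to a combinatorial count of $\chi = 2$ quotients of a cycle.

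First, by the invariance of $\beta(\cdot)$ (Lemmas \ref{lem:beta-invrt-cyc-shft} and \ref{lem:beta-invrt-red}) and of the coefficients $a_i(\cdot)$ under free reduction and cyclic rotation, I may assume $w$ is cyclically reduced. The hypothesis $\beta(w) \ge 3$, combined with Lemmas \ref{lem:phi=beta=0} and \ref{lem:phi=beta=1}, gives that $w \ne 1$, that $w$ is primitive, and that no type-B quotient in ${\cal Q}_w$ has characteristic $\le 2$. Expanding each rational factor in \eqref{eq:Phi-series} to first order in $1/n$ — each $\Gamma$ contributes a series $n^{-\chi(\Gamma)}\bigl(1 + \bigl(\sum_j \binom{e_\Gamma^j}{2} - \binom{v_\Gamma}{2}\bigr)n^{-1} + O(n^{-2})\bigr)$ — and extracting the coefficient of $n^{-2}$ yields
\begin{displaymath}
a_2(w) \;=\; \Bigl(\sum_j \binom{m_j}{2} - \binom{|w|}{2}\Bigr) \;+\; \bigl|\{\Gamma \in {\cal Q}_w : \chi(\Gamma) = 2\}\bigr|,
\end{displaymath}
where $m_j$ counts occurrences of $g_j^{\pm 1}$ in $w$: no $\chi = 0$ quotient exists since $w \ne 1$, and as in the proof of Lemma \ref{lem:phi=beta=1} the unique $\chi = 1$ quotient for primitive cyclically reduced $w$ is the length-$|w|$ cycle $C$ with $v_C = |w|$ and $e_C^j = m_j$. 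The lemma is therefore equivalent to the inequality
\begin{displaymath}
\bigl|\{\chi = 2\text{ quotients in }{\cal Q}_w\}\bigr| \;\le\; \binom{|w|}{2} - \sum_j \binom{m_j}{2},
\end{displaymath}
whose right-hand side is the number of unordered pairs of positions in $w$ carrying distinct generators.

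Since $\beta(w) \ge 3$, every $\chi = 2$ quotient is of type A, and by Lemma \ref{lem:char=mgps} it is obtained by merging one additional pair $\{s_a, s_b\}$ of distinct vertices in $C$ and closing under the Stallings-style forced identifications that arise whenever two same-labeled edges with the same orientation share a vertex. The plan for the bound is to organise this into a forcing graph ${\cal F}$ on the $\binom{|w|}{2}$ pairs of cycle vertices, with one edge per unordered same-letter pair of positions $\{p_1, p_2\}$: between $\{s_{p_1-1}, s_{p_2-1}\}$ and $\{s_{p_1}, s_{p_2}\}$ when $\alpha_{p_1} = \alpha_{p_2}$, and between $\{s_{p_1-1}, s_{p_2}\}$ and $\{s_{p_1}, s_{p_2-1}\}$ when $\alpha_{p_1} = -\alpha_{p_2}$. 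Each such edge encodes a mandatory coincidence — merging either of its endpoint-pairs forces merging the other — so $\chi = 2$ quotients correspond bijectively to connected components of ${\cal F}$, and the desired inequality is equivalent to $\mathrm{rank}({\cal F}) \ge \sum_j \binom{m_j}{2}$, i.e., to the $\sum_j \binom{m_j}{2}$ edges of ${\cal F}$ forming a forest.

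The main obstacle is verifying this acyclicity. A cycle in ${\cal F}$ would correspond to a closed cascade of mutually-dependent forcings along $C$ which identifies the same pair of cycle vertices by two different routes; the plan is to show that such a redundant cascade produces a quotient in ${\cal Q}_w$ of characteristic at most $2$ whose smallest generating set need not contain $\{s_0, s_{|w|}\}$ — intuitively, a length-$k$ cycle in ${\cal F}$ yields $k$ distinct size-$2$ presentations of the same quotient, one of which can be routed around $\{s_0, s_{|w|}\}$ — so the quotient is of type B, contradicting $\beta(w) \ge 3$. Verifying this requires a careful case analysis of how forcings interact along the cycle, in the spirit of Lemmas \ref{lem:char=mgps} and \ref{lem:2-preimages}. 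Once acyclicity is established, the rank bound and therefore $a_2(w) \le 0$ follow immediately.
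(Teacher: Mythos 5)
Your proposal reproduces the paper's proof almost exactly up to and including the construction of the forcing graph: the graph you call $\mathcal{F}$ is precisely the paper's $\Upsilon$, with $\binom{v_{\hat\Gamma}}{2}$ vertices and $\sum_j\binom{e^j_{\hat\Gamma}}{2}$ edges, and the reduction of the lemma to ``$\mathcal{F}$ is a forest'' is the same reduction the paper makes. The only place where you diverge is the acyclicity argument itself, and there you leave a genuine gap.

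The paper proves acyclicity directly from primitivity of $w$ (a consequence merely of $\beta(w)\ge 2$), not from $\beta(w)\ge 3$. A simple cycle $\{x_1,y_1\},\ldots,\{x_r,y_r\},\{x_1,y_1\}$ in $\Upsilon$ has edges spelling out a subword $u$; because $\hat\Gamma$ is a simple $|w|$-cycle, this $u$ traces two non-backtracking paths on $\hat\Gamma$, one through the $x$'s and one through the $y$'s. Either both paths are closed (whence some cyclic shift of $w$ equals another cyclic shift of $w$ or of $w^{-1}$), or $u$ goes from $x_1$ to $y_1$ and from $y_1$ to $x_1$ (whence $u=u^{-1}$ or $w$ is a cyclic shift of $u^2$). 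Every branch is ruled out by a short, concrete argument (for instance, a reduced word is never a cyclic subword of its inverse's square), so $\Upsilon$ has no cycles. This is elementary and self-contained.

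Your proposed route — that a cycle in $\mathcal{F}$ would produce a type-B quotient of characteristic $\le 2$, contradicting $\beta(w)\ge 3$ — is not substantiated, and I don't believe it is correct as stated. The $k$ size-two generating sets you get from a $k$-cycle in $\mathcal{F}$ are of the form $\{\{s_0,s_{|w|}\},\{x_i,y_i\}\}$ for $i=1,\ldots,k$: they all contain $\{s_0,s_{|w|}\}$, which is precisely what a type-A generating set is. Having many generating sets containing $\{s_0,s_{|w|}\}$ says nothing about type B; you would need to exhibit a size-two generating set that \emph{omits} $\{s_0,s_{|w|}\}$ yet still forces $s_0\equiv s_{|w|}$. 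Moreover, the forcing relations recorded by $\mathcal{F}$ live in $\hat\Gamma$, where $s_0$ and $s_{|w|}$ are already glued; when you return to the trail $s_0\to\cdots\to s_{|w|}$ and unglue them, those forcings can break, so a cascade in $\hat\Gamma$ does not automatically lift to one in the trail. Your ``careful case analysis'' is therefore not a detail to be filled in but the entire substance of the lemma, and the path you sketch toward it is not the one the paper takes and does not obviously lead anywhere. Replace that final paragraph with the paper's direct argument and the proof is complete.
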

\begin{proof}
For simplicity we assume that $w$ is cyclically reduced. (Again,
this assumption is possible because both $\beta(w)$ and
$\Phi_w(n)$ are invariant under cyclic reductions of $w$.) When
$\beta(w) \geq 3$, there is only one graph $\hat{\Gamma} \in {\cal
Q}_w$ of characteristic 1 (the universal graph), and all the
quotient graphs of characteristic 2 have type A. To find the
contribution of $\hat{\Gamma}$ to $a_2(w)$ expand the expression
$\frac{\prod_{l=1}^{v_\Gamma-1}(1-lx)}{\prod_{j=1}^k
\prod_{l=1}^{e_\Gamma^j-1} (1-lx)}$ to first order. It follows
that this contribution is $-\binom{v_{\hat{\Gamma}}}{2} +
\sum_{j=1}^k \binom {e^j_{\hat{\Gamma}}}{2}$. We need to show that
there are at most $\binom{v_{\hat{\Gamma}}}{2} - \sum_{j=1}^k
\binom {e^j_{\hat{\Gamma}}}{2}$ graphs $\Gamma \in {\cal Q}_w$
with $\chi(\Gamma)=2$.

Since $\hat{\Gamma}$ is generated by $\{s_0,s_{|w|}\}$, and every
quotient $\Gamma \in {\cal Q}_w$ of characteristic 2 has type A,
$\Gamma$ is generated from $\hat{\Gamma}$ by a single pair of
vertices of $\hat{\Gamma}$. The total number of pairs is
$\binom{v_{\hat{\Gamma}}}{2}$, but clearly different pairs may
generate the same $\Gamma$. For instance, for any two $j$-edges, the
pair of heads generates the same quotient as the pair of tails.

In order to understand the full picture, we introduce a graph
$\Upsilon$, which captures this kind of dependency between pairs.
The graph $\Upsilon$ has $\binom{v_{\hat{\Gamma}}}{2}$ vertices
labeled by the pairs of vertices of $\hat{\Gamma}$, and has
$\sum_{j=1}^k \binom {e^j_{\hat{\Gamma}}}{2}$ edges, one for each
pair of same-color edges in $\hat{\Gamma}$. The edge corresponding
to the pair $\{\epsilon_1,\epsilon_2\}$ of $j$-edges, is a
$j$-edge from the vertex $\{head(\epsilon_1) ,head(\epsilon_2)\}$
to $\{tail(\epsilon_1) ,tail(\epsilon_2)\}$. We illustrate this in
Figure \ref{fig:Upsilon}.

\begin{figure}[htb]
\centering
\includegraphics[width=1\textwidth]{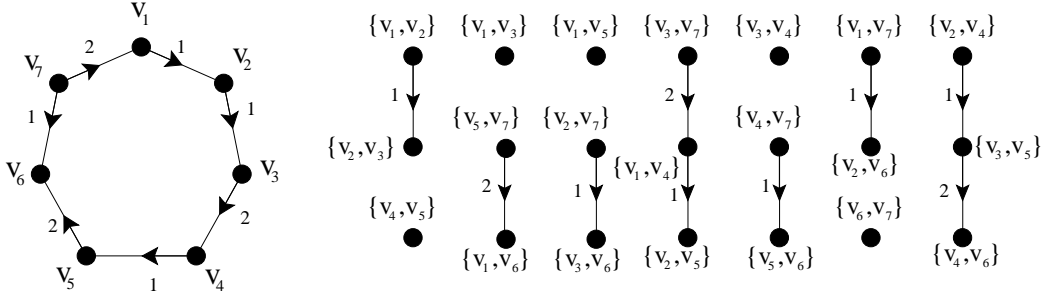}
\caption{The graph $\Upsilon$ (on the right) corresponding to the
universal graph $\hat{\Gamma}$ (on the left) for
$w=g_1^2 g_2 g_1 g_2 g_1^{\;-1} g_2$. ($\hat{\Gamma}$'s
vertices are denoted here by $v_1,\ldots,v_7$ while the $s_i$ labels
are omitted.) }  \label{fig:Upsilon}
\end{figure}

We claim that $\Upsilon$ has no cycles. As $w$ is assumed to be
cyclically reduced, $\hat{\Gamma}$ is simply a cycle and the path
of $w$ through it is a simple cycle. Now say that
$\{x_1,y_1\},\{x_2,y_2\},\ldots,\{x_r,y_r\},\{x_1,y_1\}$ is a
simple cycle in $\Upsilon$, whose edges compose some word $u$.
This $u$ corresponds to two non-backtracking paths in
$\hat{\Gamma}$ in one of the two following ways. Either $u$ is a
path from $x_1$ to itself and a path from $y_1$ to itself (whence it
is some cyclic shift of $w$ or of $w^{-1}$). Or $u$
is a path from $x_1$ to $y_1$ as well as a path from $y_1$ to
$x_1$. (See Figure \ref{fig:cycle_in_upsilon}.)

\begin{figure}[htb]
\centering
\includegraphics[width=1\textwidth]{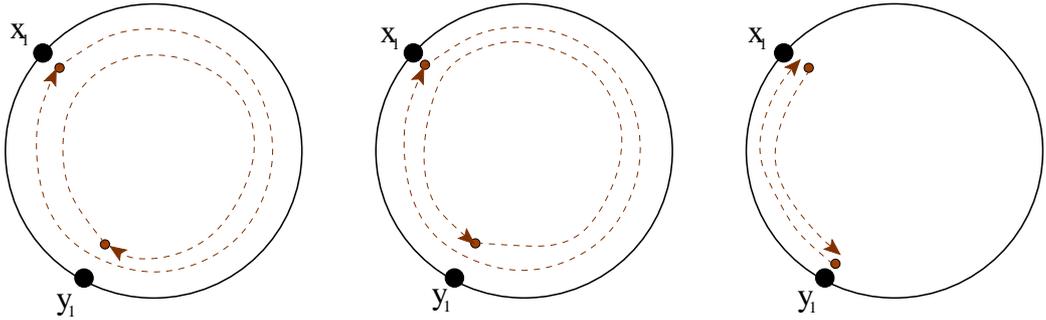}
\caption{A cycle in $\Upsilon$ yields one of these three scenarios. The
thick line stands for the universal graph $\hat{\Gamma}$. The broken
lines (arrows) stand for copies of $u$, the subword composed by the
edges of the cycle in $\Upsilon$. In the left and middle scenarios,
$u$ is a path from $x_1$ to itself and a path from $y_1$ to itself (in
the left graph the paths are equally oriented, in the middle one they
have inconsistent orientation). The right graph describes the case where
$u$ is a path from $x_1$ to $y_1$ as well as a path from $y_1$ to
$x_1$. All three scenarios are impossible.}  \label{fig:cycle_in_upsilon}
\end{figure}

In the former case, some cyclic shift of $w$ equals another
cyclic shift of $w$ or of $w^{-1}$. But $w$ is primitive, so
it is not invariant under any cyclic shift. To see that $w$ cannot
equal a cyclic shift of its inverse, we need to show that $w^{-1}$
is not a subword of $w^2$. To see this, let $w=g_1\ldots g_m$ and
assume to the contrary that $\forall j=1,\ldots,m ~~ g_j^{\;
-1}=g_{s-j}$ for some $m+1 \leq s \leq 2m$ (here $g_{i+m}=g_i$ for
$i=1,\ldots,m$). Is $s$ is even, say $s=2q$, we conclude (for
$j=q$) that $g_q=g_q^{\; -1}$ which is impossible. If $s=2q+1$ we
conclude for $j=q$ that $g_q^{\; -1} = g_{q+1}$, so that the word
$w$ is not reduced.

In the latter case, either $u=u^{-1}$, which is impossible by the
same argument, or $w$ is a cyclic shift of $u^2$, which again
contradicts its being primitive. This rules out the possibility of
a cycle in $\Upsilon$.

Clearly, two pairs of vertices in $V(\hat{\Gamma})$ which belong to
the same connected component in $\Upsilon$, generate the same
$\Gamma$. Thus the number of different $\Gamma$'s in ${\cal Q}_w$
with characteristic 2 is at most the number of connected components
in $\Upsilon$. But $\Upsilon$ has no cycles, so the number of
connected components is exactly $\binom{v_{\hat{\Gamma}}}{2} -
\sum_{j=1}^k \binom {e^j_{\hat{\Gamma}}}{2}$.
\end{proof}

Note that this last proof yields a surjective function from the
connected components of $\Upsilon$ to type-A graphs of
characteristics 2 in ${\cal Q}_w$. In order to prove that
$\phi(w)=2 \Leftrightarrow \beta(w)=2$ it suffices to show that
this function is injective (which we believe is true). That would
yield that the number of type-A quotients of characteristic 2
exactly balances off the negative contribution of $\hat{\Gamma}$
to $a_2(w)$. Thus, $a_2(w) \neq 0$ (or more precisely $a_2(w)>0$)
iff there is a type-B graph of characteristic 2, i.e. iff
$\beta(w)=2$.

In fact, we believe that something similar happens in general.
Namely, for every integer $i\geq 0$, if ${\cal Q}_w$ has no
type-B graphs of characteristic $< i$, then the
contributions from all the type-A graphs of characteristic $\leq
i$. (i.e., the graphs generated from the universal graph by fewer
than $i$ pairs) balances out. Hence
$a_0(w)=a_1(w)=\ldots=a_{i-1}(w)=0$, and $a_i(w)$ equals the
number of type-B quotients of characteristic $i$.

There are three kinds of supporting evidence to this belief. As we
saw, it is valid for $i=0,1$, and we have a good understanding
why it should hold for $i=2$ as well. Also, we have carried out extensive
numerical simulations to test it for $i=2,3,4$ without any failure.
Finally, consider a word $w$ such that $w \sim a$. Such a word has
only type-A quotients, and we know that $a_i(w) = 0$ for all $i$.
In this case, therefore, the type-A quotients of characteristic
$\leq i$ indeed balance the contribution of each other to $a_0(w),
\ldots, a_i(w)$. (We note that the vanishing of all $a_i(w)$ even
in this specialized case is not obvious).

Here, then, is a formal statement of this main conjecture:
\begin{conj} \label{conj:beta=phi}
For every $w \in \mathbf{F}_k$,
\begin{equation}\label{eq:phi=beta}
\phi(w) = \beta(w)
\end{equation}

Moreover, if $\phi(w)=i$, then
\begin{equation}\label{eq:a_i-equals...}
a_i(w) = \left| \{\Gamma \in {\cal Q}_w~:~\chi(\Gamma)=i \textrm{
and $\Gamma$ has type B}\} \right|
\end{equation}
\end{conj}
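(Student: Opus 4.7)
The plan is to reduce both parts of the conjecture to a single formal-power-series identity: for $w \neq 1$,
\[
\sum_{\substack{\Gamma \in {\cal Q}_w \\ \Gamma \text{ has type A}}} \left(\frac{1}{n}\right)^{\chi(\Gamma)} \frac{\prod_{l=1}^{v_\Gamma-1}(1-\frac{l}{n})}{\prod_{j=1}^k \prod_{l=1}^{e_\Gamma^j-1}(1-\frac{l}{n})} = \frac{1}{n}.
\]
Granting this and substituting into \eqref{eq:Phi-series}, $\Phi_w(n)$ collapses to the analogous sum restricted to type B quotients. Each such $\Gamma$ contributes $(1/n)^{\chi(\Gamma)}$ as its leading term, with corrections of strictly smaller order (larger power of $1/n$). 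If $\beta(w) = i$ then no type B quotients of characteristic below $i$ exist, so $a_0 = \cdots = a_{i-1} = 0$ and $a_i(w) = |\{\Gamma \in {\cal Q}_w : \chi(\Gamma) = i \text{ and } \Gamma \text{ has type B}\}|$, a positive integer. Both \eqref{eq:phi=beta} and \eqref{eq:a_i-equals...} follow (the degenerate case $w = 1$ is already covered by Lemma \ref{lem:phi=beta=0}).

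The identity above is the crux. At order $1/n$ it is immediate: $\hat{\Gamma}$ is the unique type A quotient of characteristic 1, and its leading contribution is exactly $1/n$. At order $1/n^2$ the identity is essentially the content of Lemma \ref{lem:phi approx beta=2}, sharpened from an inequality into equality: the constant correction $\binom{v_{\hat{\Gamma}}}{2} - \sum_j \binom{e_{\hat{\Gamma}}^j}{2}$ from $\hat{\Gamma}$ must be cancelled \emph{exactly} by the leading $+1$ contributions from type A quotients of characteristic 2. The first concrete task would therefore be to upgrade the surjection in Lemma \ref{lem:phi approx beta=2} to a bijection---that is, to show that two vertex-pairs of $\hat{\Gamma}$ generate the same quotient if and only if they lie in the same connected component of $\Upsilon$. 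I would approach the nontrivial direction by tracking the chain of forced identifications triggered by a single pair-merge, and verifying that each forced step corresponds precisely to an edge of $\Upsilon$.

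For general $i$, the natural plan is to construct a higher-dimensional analog $\Upsilon^{(i)}$, a cell complex whose cells encode $(i-1)$-tuples of additional pair-identifications beyond $\{s_0, s_{|w|}\}$, and to deduce the identity at order $1/n^i$ via an Euler-characteristic argument on $\Upsilon^{(i)}$. The main obstacle lies precisely in this combinatorial reorganization: expanding each rational factor $\prod_l (1 - l/n) / \prod_j \prod_l (1 - l/n)$ produces an unwieldy signed sum of binomial products, and aligning these analytic contributions term-by-term with a clean M\"obius inversion on the poset of generating sets has so far resisted a direct attack. The case $w \sim a$, in which all $a_i(w)$ vanish despite a profusion of type A quotients, is a non-obvious proof-of-concept, and the numerical evidence reported above through $i = 4$ strongly supports the belief that the correct combinatorial framework exists but has yet to be isolated.
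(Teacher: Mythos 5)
The statement you were asked to prove is labeled ``Conjecture'' in the paper, and the paper does not prove it --- it is listed explicitly among the open problems in Section~\ref{sec:open_prob}. So there is no proof in the paper for you to be compared against, and your proposal should be judged on whether it actually closes the conjecture. It does not, and you say so yourself.

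What you have done is repackage the conjecture as the clean power-series identity
\[
\sum_{\substack{\Gamma \in {\cal Q}_w \\ \Gamma \text{ type A}}} \left(\tfrac{1}{n}\right)^{\chi(\Gamma)} \frac{\prod_{l=1}^{v_\Gamma-1}(1-\tfrac{l}{n})}{\prod_{j=1}^k\prod_{l=1}^{e^j_\Gamma-1}(1-\tfrac{l}{n})} = \frac{1}{n},
\]
from which both parts of the conjecture do follow by the reasoning you give: the type-A contributions then cancel the $-\tfrac{1}{n}$ in \eqref{eq:Phi-series} exactly, leaving $\Phi_w(n)$ as the type-B sum, whose leading order is $\chi = \beta(w)$ with coefficient equal to the count of type-B quotients of that characteristic. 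This is precisely the heuristic the authors sketch in the paragraph preceding the conjecture (``the contributions from all the type-A graphs of characteristic $\leq i$ \ldots\ balances out''), and your proposed identity is in fact slightly stronger than what the conjecture strictly requires --- the paper only needs cancellation up to order $\beta(w)$, not at all orders. The cases you correctly flag as established match the paper: order $1/n$ is Lemma~\ref{lem:phi=beta=1}, and order $1/n^2$ is Lemma~\ref{lem:phi approx beta=2}, which gives an inequality via a surjection from components of $\Upsilon$ onto type-A $\chi=2$ quotients but stops short of the bijection.

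The genuine gap is exactly where you place it. Upgrading that surjection to a bijection is unproved (the paper says only ``which we believe is true''), and for $i \geq 3$ there is no candidate argument at all: the signed binomial corrections coming from the rational factors do not obviously organize into an Euler-characteristic cancellation on any poset of generating sets, and the paper offers nothing beyond numerical evidence through $i=4$. Your proposal is therefore a faithful restatement of the research problem together with the same partial progress the authors already record --- a reasonable strategy document, but not a proof, and you should not present it as one.
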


Conjecture \ref{conj:beta=phi} is the main missing link in our
tentative proof to the claim that $\mathbb{E}(\mu_{max}) <
O(\rho)$. We return to this in Section \ref{sec:lifts}. Note that
this conjecture implies that the first non-zero coefficient in the
power series of $\Phi_w(n)$ is positive. This means that
$E(X_w^{(n)}) \geq 1$ for every $w \in \mathbf{F}_k$ and every
large enough $n$.

\begin{remark}
It might be tempting to suspect something even stronger, namely that
for every $w \in \mathbf{F}_k$ and $n \geq 1$, $E(X_w^{(n)}) \geq 1$.
However, this stronger assertion is false. (We would like to thank
Mikl\'{o}s Ab\'{e}rt for showing us the invalidity of this claim. The main
ideas of the proof can be found in \cite{abert}.)
\end{remark}

Finally, we present a second conjecture based on other simulations
we conducted. These simulations suggest that the only words for
which $\phi(w)=\infty$ are those mentioned above:

\begin{conj}
\label{nielsen_conj}
$E(X_w^{(n)}) \equiv 1$ iff $w$ is equivalent ($\sim$) to a
single-letter word.
\end{conj}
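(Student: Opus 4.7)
The forward direction is immediate: if $w\sim a$ for a single letter $a$, then (as already observed in Section \ref{sec:word-maps}) any $\pi\in\mathbf{A}_k$ descends to a bijection on $S_n^k$, so the distribution of $w(\sigma_1,\ldots,\sigma_k)$ agrees with that of $a(\sigma_1,\ldots,\sigma_k)=\sigma_i$ for some $i$. A uniformly random permutation in $S_n$ has expected one fixed point, giving $\mathbb{E}(X_w^{(n)})\equiv 1$.

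The backward direction is the substantive content. My plan is first to pass, using the $\sim$-invariance of $\mathbb{E}(X_w^{(n)})$, to a Whitehead-minimal and cyclically reduced representative of the $\sim$-class: any such representative of the class of a single letter is itself a single letter, so the goal reduces to showing that $\phi(w)=\infty$ together with Whitehead-minimality forces $|w|=1$. Next I would invoke (a partial form of) Conjecture \ref{conj:beta=phi}: from $\phi(w)=\infty$, i.e.\ $a_i(w)=0$ for all $i$, I would like to deduce $\beta(w)=\infty$, meaning no quotient $\Gamma\in{\cal Q}_w$ has type~B. Lemma \ref{lem:2-preimages} then says that in each $\Gamma$ every edge is traversed at least twice can be ruled out, and the generating-set structure from Lemma \ref{lem:char=mgps} forces every identification to be compatible with the distinguished pair $\{s_0,s_{|w|}\}$.

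At this point I would translate the combinatorial condition ``every $\Gamma\in{\cal Q}_w$ has type~A'' into a statement about the Stallings core graph of the cyclic subgroup $\langle w\rangle\le\mathbf{F}_k$. The classical criterion of Whitehead and Stallings gives that $w\sim a$ iff $\langle w\rangle$ is a free factor of $\mathbf{F}_k$, equivalently iff its core graph embeds into an $\mathbf{F}_k$-rose with only $k-1$ additional tree edges. I would argue that the type-A condition, applied uniformly to all quotients obtainable by identifying pairs $\{s_i,s_j\}$ in the trail of $w$, implies that the core graph of $\langle w\rangle$ admits a ``valence-completion'' by hanging trees, so that no essential coincidence beyond $\{s_0,s_{|w|}\}$ ever occurs; this yields primitivity of $w$ in $\mathbf{F}_k$, and hence $w\sim a$.

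The main obstacle is the second step, namely turning $a_i(w)=0$ for all $i$ into a structural statement about ${\cal Q}_w$. This is precisely where Conjecture \ref{conj:beta=phi} is needed, and the proof of that conjecture is the principal open problem of the paper; Lemmas \ref{lem:phi=beta=0}, \ref{lem:phi=beta=1} and \ref{lem:phi approx beta=2} only establish it for $i\le 1$ and in a one-sided form for $i=2$. A plausible bypass would be to construct, for every non-primitive cyclically reduced $w$, an explicit witness $n_0$ and a non-type-A quotient whose contribution to $\Phi_w(n_0)$ cannot be cancelled by the type-A terms, in the spirit of the graph $\Upsilon$ introduced in the proof of Lemma \ref{lem:phi approx beta=2}; but producing such a witness in general seems to require precisely the kind of cancellation theorem for type-A quotients that Conjecture \ref{conj:beta=phi} asserts.
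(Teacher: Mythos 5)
This statement is labelled a \emph{conjecture} in the paper (and is reiterated as an open problem in Section~\ref{sec:open_prob}), so there is no proof in the paper to compare your attempt against; the authors offer only the observation that $w \sim a$ implies $\mathbb{E}(X_w^{(n)}) \equiv 1$, together with numerical evidence for the converse.

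Your treatment of the forward direction is correct and matches the paper's reasoning: $\sim$-equivalence preserves the distribution of the image on every finite group, and for a single letter the image is uniform with expected one fixed point.

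For the backward direction you honestly flag that the argument is incomplete, and this is right — but let me make the gaps concrete. First, as you note, passing from $\phi(w)=\infty$ to $\beta(w)=\infty$ requires Conjecture~\ref{conj:beta=phi}, which is open. Second, even granting that, you still need $\beta(w)=\infty \Rightarrow w\sim a$; this is not a lemma anywhere in the paper and would itself be a substantial new result. The paper only establishes the opposite implication (words $\sim a$ have only type-A quotients), so the logical content you'd be adding is genuinely new. Third, the sentence invoking Lemma~\ref{lem:2-preimages} is garbled: that lemma states that a type-B quotient has every edge traversed at least twice, and the remark immediately after it gives an example (the word $ababa$) showing the converse fails, so ``no type-B quotient'' does \emph{not} tell you that every edge in every quotient is traversed once. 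Finally, the jump from ``every $\Gamma \in {\cal Q}_w$ has type A'' to a valence-completion property of the Stallings core of $\langle w\rangle$ is a plausible-sounding heuristic but is not argued; the relationship between the quotient-graph machinery of this paper and the Whitehead/Stallings primitivity criterion is precisely what one would have to establish, and you do not indicate how the type-A condition controls the folding sequence of the core graph. In short: what you have is a reasonable research plan, consistent with what the paper leaves open, but not a proof, and you are right that this is so.
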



\section{The Largest New Eigenvalue in a Random Lift of a Graph}
\label{sec:lifts}

In this section we apply our findings concerning formal words and word
maps on $S_n$ to study the new eigenvalues in random lifts
of graphs. Our main result is
Theorem \ref{ther:2/3} which says that $\mu_{max} \leq
O(\lambda_1^{\;1/3}\rho^{2/3})$ almost surely.

Recall the definition of $\phi(w)$ for $w\in \mathbf{F}_k$
(Equation \eqref{eq:phi}). Namely, $\Phi_w(n) =
\frac{a_{\phi(w)}(w)+o(1)}{n^{\phi(w)}}$. We first seek an
improved estimate of the $o(1)$ term in the numerator.

\begin{lem} \label{lem:n_manot}
Let $w\in \Sigma_k$ and let $i\geq 0$ be some integer. Then:
\[\left|\{\Gamma \in {\cal Q}_w~:~\chi(\Gamma)=i \}\right| \leq |w|^{2i}\]
\end{lem}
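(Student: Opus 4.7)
The plan is to reduce the problem to counting the possible smallest generating sets, using Lemma~\ref{lem:char=mgps}. By that lemma, every quotient graph $\Gamma \in \mathcal{Q}_w$ with $\chi(\Gamma) = i$ admits a generating set consisting of exactly $i$ pairs drawn from the $|w|+1$ symbols $\{s_0, s_1, \ldots, s_{|w|}\}$. Moreover, by the definition of ``generates'' (Definition~\ref{df:generating-set}), a set of pairs determines a unique quotient — the finest realizable partition in which the specified pairs are merged. Hence the map sending a quotient $\Gamma$ of characteristic $i$ to any one of its smallest generating sets is an injection from $\{\Gamma \in \mathcal{Q}_w : \chi(\Gamma) = i\}$ into the collection of $i$-element sets of pairs of the $s_b$'s.

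First I would count: the number of unordered pairs from the $|w|+1$ symbols $s_0, \ldots, s_{|w|}$ is $\binom{|w|+1}{2} = \frac{|w|(|w|+1)}{2}$, which is at most $|w|^2$ for every $|w| \geq 1$. Then the number of $i$-element subsets of these pairs is at most
\[
\binom{|w|^2}{i} \;\leq\; \frac{|w|^{2i}}{i!} \;\leq\; |w|^{2i}.
\]
Combining with the injection above gives the desired bound. Edge cases ($i = 0$, when the only possible quotient is the one generated by the empty set, matching $|w|^0 = 1$; and very short $w$) are handled by the same estimate.

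There is no real obstacle here — the argument is a straightforward counting application of Lemma~\ref{lem:char=mgps}. The only subtlety to be careful about is that the generating set might not be unique, but we do not need uniqueness: it suffices that \emph{at least one} generating set of size $i$ exists for each $\Gamma$ with $\chi(\Gamma) = i$, since we are only bounding the cardinality of a set by injecting it into the space of generating sets of that size.
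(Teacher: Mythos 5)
Your proof is correct and takes essentially the same approach as the paper: use Lemma~\ref{lem:char=mgps} to associate to each $\Gamma$ of characteristic $i$ a generating set of $i$ pairs, note that distinct quotients must have distinct generating sets (since a generating set determines the quotient it generates), and then count: there are $\binom{|w|+1}{2}\le |w|^2$ possible pairs and hence at most $|w|^{2i}$ such $i$-element sets. The paper's proof is just a terser version of the same argument; your explicit remark that the map $\Gamma\mapsto S$ is injective because $S$ determines $\Gamma$ is the only step the paper leaves implicit.
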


\begin{proof}
As we saw in Lemma \ref{lem:char=mgps}, each $\Gamma \in {\cal Q}_w$
with characteristic $i$ is generated by some set of $i$ pairs. There
are $\binom{|w| + 1}{2} \leq |w|^2$ pairs to choose from, and the
claim follows.
\end{proof}

\begin{lem} \label{lem:analyze-o(1)}
If $\phi(w) = i$ and if $n \geq 3|w|^2$, then
\begin{equation} \label{eq:analyze-o(1)}
\Phi_w(n) \leq
\frac{1}{n^i}\left(a_i(w)+\frac{|w|^{2i+4}}{n}\right).
\end{equation}
\end{lem}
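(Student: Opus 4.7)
My starting point is \eqref{eq:Phi-series},
\[\Phi_w(n) \;=\; -\frac{1}{n} + \sum_{\Gamma \in {\cal Q}_w}\frac{F_\Gamma(1/n)}{n^{\chi(\Gamma)}},\qquad F_\Gamma(x) \;=\; \frac{\prod_{l=1}^{v_\Gamma-1}(1-lx)}{\prod_{j=1}^k\prod_{l=1}^{e_\Gamma^j-1}(1-lx)},\]
combined with Lemma \ref{lem:n_manot}. A preliminary observation is that for $n\ge 3|w|^2$, Weierstrass's inequality $\prod(1-a_\ell)\ge 1-\sum a_\ell$ gives $\prod_{j,l}(1-l/n) \ge 1 - |w|^2/n \ge 2/3$, so $F_\Gamma(1/n)\le 3/2$ uniformly in $\Gamma$.

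Expanding $F_\Gamma(x) = \sum_{r\ge 0}c_{\Gamma,r}x^r$, the assumption $\phi(w)=i$ forces $a_0(w)=\dots=a_{i-1}(w)=0$; a short bookkeeping then shows that replacing each $F_\Gamma(1/n)$ by its Taylor polynomial of degree $i-\chi(\Gamma)$ (taken to be $0$ when $\chi(\Gamma)>i$) and including the $-1/n$ term collapses to exactly $a_i(w)/n^i$. Hence
\[\Phi_w(n) - \frac{a_i(w)}{n^i} \;=\; \sum_{\Gamma \in {\cal Q}_w}\frac{1}{n^{\chi(\Gamma)}}\sum_{r > i-\chi(\Gamma)}\frac{c_{\Gamma,r}}{n^r},\]
and I need to bound the right hand side by $|w|^{2i+4}/n^{i+1}$.

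I split by $\chi(\Gamma)$. When $\chi(\Gamma)\ge i+1$, the inner sum equals $F_\Gamma(1/n)\le 3/2$ and is positive; Lemma \ref{lem:n_manot} (at most $|w|^{2c}$ quotients of Euler characteristic $c$) together with $|w|^2/n\le 1/3$ summed geometrically bounds this part by $\tfrac{9}{4}|w|^{2(i+1)}/n^{i+1}$. When $\chi(\Gamma)\le i$, I estimate $|c_{\Gamma,r}|$ by a Cauchy integral on the circle $|x|=1/|w|^2$: on this small circle one has $|1-lx|\le 1+l/|w|^2$ and $|1-lx|^{-1}\le 1+2l/|w|^2$, so Weierstrass-type estimates give $|\prod_{l=1}^{v_\Gamma-1}(1-lx)|\le e^{1/2+o(1)}$ and $|\prod_{j,l}(1-lx)|^{-1}\le e^{O(1/|w|)}$, hence $|F_\Gamma(x)|\le e^2$ there. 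Cauchy then yields $|c_{\Gamma,r}|\le e^2|w|^{2r}$, and summing the remaining geometric tail (valid since $|w|^2/n\le 1/3$) against Lemma \ref{lem:n_manot} gives a contribution of at most $C(i+1)|w|^{2(i+1)}/n^{i+1}$ for a universal constant $C$. Combining the two pieces, the total is $\le C'(i+1)|w|^{2(i+1)}/n^{i+1}$, and the extra factor $|w|^2$ in the target $|w|^{2i+4}$ absorbs $C'(i+1)$ for all but a finite list of small $(|w|,i)$ pairs, which can be verified directly.

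The main technical obstacle is keeping constants polynomial rather than exponential in $|w|$: a Cauchy integral on the natural contour of radius $\asymp 1/|w|$ (where each $|1-lx|$ is bounded only by constants like $3/2$ and $1/2$) produces $|F_\Gamma(x)|\lesssim 3^{|w|}$, which is useless. Shrinking the contour to radius $1/|w|^2$ — effectively exploiting that $F_\Gamma$ is a ratio of \emph{few} factors (at most $|w|$ in either product) each with \emph{small} integer $l\le |w|$ — is what replaces the exponential by a polynomial in $|w|$.
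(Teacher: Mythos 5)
Your proof follows the same two-step outline as the paper's: bound the Taylor coefficients of each $F_\Gamma$ by $O(|w|^{2r})$, then sum the resulting geometric tails against Lemma~\ref{lem:n_manot}. But you obtain the coefficient bound by a genuinely different technique. The paper writes $F_\Gamma = (1+\sum_r c_r x^r)/(1+\sum_r d_r x^r)$, bounds $|c_r|,|d_r|\le (|w|^2/2)^r$ by an elementary symmetric-function estimate, and then runs an induction on the convolution $b_r = c_r - d_r - \sum_{j<r} b_j d_{r-j}$, which closes to \emph{exactly} $|b_r|\le |w|^{2r}$. You instead apply Cauchy's formula on the contour $|x|=1/|w|^2$; your observation that this, rather than the ``natural'' radius $\asymp 1/|w|$, is the right scale --- since each of the $O(|w|)$ factors $(1-lx)^{\pm1}$ with $l\le|w|$ then contributes only $1+O(1/|w|)$, so the whole product stays $O(1)$ --- is the real insight, and it is a pleasant conceptual explanation of why $|w|^{2r}$ is the inevitable bound. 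The price is an $O(1)$ multiplicative loss against the paper's constant $1$, and it shows up in the absorption step: you need $C'(i+1)\le |w|^2$ with $C'$ on the order of ten, so all words of length up to roughly $10$--$15$ are deferred to ``direct verification.'' As written that is a genuine gap: the number of such words grows with $k$ (and is astronomically large even for moderate $k$), so it is not a finite check; to close it you would either have to sharpen the contour estimate or simply fall back on the paper's elementary bound. (By contrast the paper's absorption inequality, $\tfrac{3}{2}(|w|+1)\le|w|^2$, fails only at $|w|\le 2$, where $\phi(w)\in\{0,1,\infty\}$ and the lemma is immediate.) One smaller point: for $i=0$ the $-1/n$ term of \eqref{eq:Phi-series} is not captured by any $a_m$ with $m\le i$, so your displayed identity for $\Phi_w(n)-a_i(w)/n^i$ is off by $-1/n$ in that case; the sign is in your favor, so the upper bound survives, but it deserves a remark.
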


\begin{proof}
As mentioned in the beginning of Section \ref{sbs:phi=beta},
$a_i(w)$, the coefficient of $\frac{1}{n^i}$ in the power series
of $\Phi_w(n)$, is completely determined by quotients in ${\cal
Q}_w$ of characteristic $\leq i$. We now bound the contribution to
$a_i(w)$ of every such quotient.

To this end, we analyze the contribution of $\Gamma$ to
$\Phi_w(n)$, as specified in \eqref{eq:Phi-series}. For the sake
of convenience, we let $x$ equal $\frac{1}{n}$, and express this
contribution as $x^{e_\Gamma-v_\Gamma+1} \cdot
\frac{\prod_{l=1}^{v_\Gamma-1}(1-lx)}{\prod_{j=1}^k
\prod_{l=1}^{e_\Gamma^j-1} (1-lx)}$. For small $x$ we can expand
the fraction in this expression as a power series
$\sum_{r=0}^\infty b_r x^r$. Write $\prod_{l=1}^{v_\Gamma-1}(1-lx)
= 1+ \sum_{r \ge 1} c_r x^r$, and $\prod_{j=1}^k
\prod_{l=1}^{e_\Gamma^j - 1} (1-lx) = 1 + \sum_{r \ge 1} d_r x^r$.
Then:
\[ 1 + \sum_{r=1}^{\infty}c_rx^r = \left[ \sum_{r=0}^\infty b_r x^r
\right] \left[ 1+ \sum_{r=1}^{\infty} d_r x^r \right] \] Thus
$b_0=1$ and for $r\geq 1$, $b_r = c_r - d_r - b_1d_{r-1} - ...
-b_{r-1}d_1$. We have
\begin{displaymath}
|c_r| = \sum_{1 \leq y_1 < \ldots < y_r \leq v_\Gamma-1} y_1 \ldots
y_r \leq \left[\sum_{y=1}^{v_\Gamma-1} y\right]^r \leq
\left(\frac{v_\Gamma^{\;2}}{2}\right)^r \leq \frac{|w|^{2r}}{2^r}
\end{displaymath}
Similarly, $|d_r| \leq \frac{|w|^{2r}}{2^r}$. A simple induction now
shows that $|b_r| \leq |w|^{2r}$:
\begin{eqnarray*}
|b_r| &=& |c_r - d_r - b_1d_{r-1} - ... -b_{r-1}d_1| \leq \\
&\leq& \frac{|w|^{2r}}{2^r} + \frac{|w|^{2r}}{2^r} +
\frac{|w|^2|w|^{2r-2}}{2^{r-1}} + \frac{|w|^4|w|^{2r-4}}{2^{r-2}} +
\ldots + \frac{|w|^{2r-2}|w|^2}{2} = |w|^{2r}
\end{eqnarray*}

Now consider the coefficient $a_i(w)$. There is at most one quotient
in ${\cal Q}_w$ of characteristic 0 (Lemma \ref{lem:n_manot}) which
contributes at most $|w|^{2i}$ to $a_i(w)$; There are at most
$|w|^2$ quotients of characteristic 1 which contribute at most
$|w|^{2i-2}$ each, etc. There are no quotients with characteristic
greater then $|w|$, so we have at most contribution of $|w|^{2i}$ of
quotients of every characteristic $0 \leq \chi \leq |w|$. Thus
$a_i(w) \leq (|w|+1)|w|^{2i}$.

This yields the following:
\begin{eqnarray*}
\Phi_w(n) &=& \sum_{i=\phi(w)}^\infty a_i(w)\frac{1}{n^i} \leq
a_{\phi(w)}(w) \frac{1}{n^{\phi(w)}} + \sum_{i=\phi(w)+1}^\infty
\frac {(|w|+1)|w|^{2i}}{n^i} = \\ &=& \frac{1}{n^{\phi(w)}} \left[
a_{\phi(w)}(w) + \frac{(|w|+1)|w|^{2\phi(w)+2}}{n} \frac{n}{n-|w|^2}
\right]
\end{eqnarray*}
The lemma now follows because $n \geq 3|w|^2$.
\end{proof}

The set of possible values for $\beta(w)$ is
$\{0,1,\ldots,|w|\}\cup\{\infty\}$, and we now
split the sum over $w \in {\cal CP}_t(G)$
in \eqref{eq:trace} according to $\beta(w)$. This yields:
\begin{eqnarray*}
\mathbb{E}(\mu_{max}^{\;~~~t}) &\leq& \sum_{w \in {\cal
CP}_t(G)} \left[\mathbb{E}(X_w^{(n)}) - 1
\right] = \sum_{w \in {\cal CP}_t(G)} n\cdot \Phi_w(n) \\
&=& \sum_{i \in \{0,1,\ldots,t\}\cup\{\infty\}} \sum_{\substack{w
\in {\cal CP}_t(G) \\ \beta(w)=i}} n\cdot \Phi_w(n)
\end{eqnarray*}

The statement of (the unproved) Conjecture \ref{conj:beta=phi}
implies that the sum over $w$ with $\beta(w) = \infty$ vanishes,
since $\beta(w) = \infty$ yields $\phi(w) = \infty$ and hence
$\Phi_w(n) \equiv 0$. We suspect that it should be possible to
bound the number of words with $\beta(w)=i$ (Some results along
these lines are proved in Section \ref{sbs:n_words_with_beta=}). This, combined
with the statement of Conjecture \ref{conj:beta=phi}
would have allowed us to
bound the contribution of each $0\leq i \leq t$ to the above sum.

This problem is still open, so instead we split the set ${\cal
CP}_t(G)$ into four parts:
\begin{eqnarray} \label{eq:split-by-beta}
\mathbb{E}(\mu_{max}^{\;~~~t}) \leq & \sum_{\substack{w \in
{\cal CP}_t(G)\\ \beta(w)=0}} n \cdot \Phi_w(n) +
\sum_{\substack{w \in {\cal CP}_t(G)\\ \beta(w)=1}} n \cdot \Phi_w(n) + \nonumber \\
& \sum_{\substack{w \in {\cal CP}_t(G)\\ \beta(w)=2}} n \cdot
\Phi_w(n) + \sum_{\substack{w \in {\cal CP}_t(G)\\ \beta(w)\geq 3}}
n\cdot \Phi_w(n) \label{beta-split}
\end{eqnarray}

Using Lemmas \ref{lem:phi=beta=0} and \ref{lem:phi=beta=1} we can
bound the value of $\Phi_w(n)$ when $\beta(w)=0$ or $1$.
For these values $\beta(w)= \phi(w)$ and
Lemma~\ref{lem:analyze-o(1)} can be applied. If $\beta(w)=2$, then
$\phi(w) \geq 2$, and we can use \eqref{eq:analyze-o(1)} in its
worst case, i.e. when $\phi(w)=2$. Finally, if $\beta(w)\geq 3$,
the following lemma shows that $\Phi_w(n)\leq O\left(
\frac{1}{n^3} \right)$.

\begin{lem} \label{lem:beta>=3}
Let $w \in \mathbf{F}_k$ have $\beta(w) \geq 3$. Then \[\Phi_w(n)
\leq \frac{1}{n^3} \left( a_3(w) + \frac{|w|^{10}}{n} \right)\]
\end{lem}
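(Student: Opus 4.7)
The plan is to exploit the three inputs already at hand: Lemmas \ref{lem:phi=beta=0}, \ref{lem:phi=beta=1}, and \ref{lem:phi approx beta=2}. Together they pin down the first few coefficients of the series $\Phi_w(n)=\sum_{i\ge 0} a_i(w)\,n^{-i}$ whenever $\beta(w)\ge 3$. Explicitly, $\beta(w)\ne 0$ and $\beta(w)\ne 1$ force (via the ``iff'' statements in the first two lemmas) $\phi(w)\ne 0$ and $\phi(w)\ne 1$, so $a_0(w)=a_1(w)=0$. Then Lemma \ref{lem:phi approx beta=2} gives $a_2(w)\le 0$. Thus
\[
\Phi_w(n) \;=\; \frac{a_2(w)}{n^2}+\frac{a_3(w)}{n^3}+\sum_{i\ge 4}\frac{a_i(w)}{n^i} \;\le\; \frac{a_3(w)}{n^3}+\sum_{i\ge 4}\frac{a_i(w)}{n^i},
\]
since the $a_2(w)/n^2$ term is non-positive. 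It therefore suffices to show that the tail sum is bounded by $|w|^{10}/n^4$.

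Next I would import the coefficient bound that is already implicit in the proof of Lemma \ref{lem:analyze-o(1)}. There it is shown, by bounding $|b_r|\le |w|^{2r}$ for the expansion of each quotient's contribution, combined with Lemma \ref{lem:n_manot}'s count $|\{\Gamma\in \mathcal{Q}_w:\chi(\Gamma)=c\}|\le |w|^{2c}$, that
\[
a_i(w)\;\le\;\sum_{c=0}^{i}|w|^{2c}\cdot|w|^{2(i-c)}\;\le\;(|w|+1)\,|w|^{2i}.
\]
Feeding this into the tail, for $n\ge 3|w|^2$ we get a geometric-series estimate
\[
\sum_{i\ge 4}\frac{a_i(w)}{n^i}\;\le\;\frac{(|w|+1)|w|^{8}}{n^4}\cdot\frac{n}{n-|w|^{2}}\;\le\;\frac{|w|^{10}}{n^4},
\]
the last step being a crude constant chase (the factor $\frac{n}{n-|w|^2}\le \tfrac{3}{2}$ and $\tfrac{3}{2}(|w|+1)\le |w|^2$ whenever $|w|\ge 2$; the trivial case $|w|\le 1$ can be dispatched separately since then $w$ is either empty or a single letter and $\beta(w)\ge 3$ fails). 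Assembling the two bounds yields $\Phi_w(n)\le \frac{1}{n^3}\!\left(a_3(w)+\frac{|w|^{10}}{n}\right)$, which is the claimed inequality.

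The only genuinely delicate ingredient is the sign information $a_2(w)\le 0$ supplied by Lemma \ref{lem:phi approx beta=2}; everything else is a routine extraction of estimates already proved in Lemmas \ref{lem:n_manot} and \ref{lem:analyze-o(1)}. In particular, the non-positivity of $a_2(w)$ is what lets us \emph{drop} the $n^{-2}$ term rather than merely absorb it into an error, which is essential to achieve the $n^{-3}$ decay. One should take a moment to verify that the threshold $n\ge 3|w|^2$ is compatible with the domain of the power-series expansion used above (it is, by the same argument as in the proof of Lemma \ref{lem:analyze-o(1)}), and to handle small $|w|$ as noted; with that, the lemma is established.
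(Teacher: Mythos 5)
Your proposal follows essentially the same route as the paper's proof: use Lemmas \ref{lem:phi=beta=0}, \ref{lem:phi=beta=1} and \ref{lem:phi approx beta=2} to conclude $a_0(w)=a_1(w)=0$ and $a_2(w)\le 0$, drop the non-positive $n^{-2}$ term, and then reuse the coefficient bound $a_i(w)\le(|w|+1)|w|^{2i}$ together with the geometric-series estimate from the proof of Lemma \ref{lem:analyze-o(1)}. The paper phrases this last step as ``an immediate consequence of the analysis in Lemma \ref{lem:analyze-o(1)}'' (noting that that proof only needs $a_j(w)\le0$ for $j<i$, not $\phi(w)=i$), so you have simply unfolded the same argument.

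Two small slips in the end-game constant chase, neither of which affects the substance. First, $\tfrac32(|w|+1)\le|w|^2$ fails at $|w|=2$ (it gives $4.5>4$); it holds only for $|w|\ge3$. Second, a single-letter word $a$ has $\beta(a)=\infty$ by Lemma \ref{lem:2-preimages}, so the hypothesis $\beta(w)\ge3$ does \emph{not} ``fail'' for $|w|=1$. The right way to close the gap is to note that when $\beta(w)$ is finite and $\ge3$ one automatically has $|w|\ge3$ (since $\beta(w)\le|w|$), while when $\beta(w)=\infty$ with $|w|\le2$ the word is $\sim$-equivalent to a single letter (some generator appears exactly once), hence $\phi(w)=\infty$ and $\Phi_w\equiv0$, so the claimed inequality holds trivially.
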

\begin{proof}
The assumption $\beta(w) \geq 3$ yields that $a_0(w)=a_1(w)=0$ and
that $a_2(w) \leq 0$ (Lemma \ref{lem:phi approx beta=2}). Thus
clearly $\Phi_w(n) \le \sum_{i=3}^\infty a_i(w)\frac{1}{n^i}$, and
the claim is an immediate consequence of the analysis in Lemma
\ref{lem:analyze-o(1)}. (This is true since the proof of Lemma
\ref{lem:analyze-o(1)} does not
take full advantage of the assumption that $\phi(w) = i$, but
rather that $a_j(w) \le 0$ for $j < i$.)
\end{proof}

To proceed with our analysis of Equation \eqref{eq:split-by-beta},
we now bound the number of words in ${\cal CP}_t(G)$ with
$\beta(w)=i$ for $i=0,1,2$.

\subsection{The Number of Words $w \in {\cal CP}_t(G)$ with Fixed
$\beta(w)$} \label{sbs:n_words_with_beta=}

Our proof for this bound extends an idea that originated with
\cite{buc} and was later developed in \cite{fri03}. Recall that
$\rho$ denotes the spectral radius of $T$, the universal cover of
the base graph
$G$ (as well as of any lift of $G$). Buck found a bound expressed
in terms of $\rho$ for the number of words in ${\cal CP}_t(G)$
that reduce to 1. Friedman used a similar method to bound the
number of imprimitive words in ${\cal CP}_t(G)$. We further
develop the method in order to bound the number of words in ${\cal
CP}_t(G)$ with $\beta(w)=2$.

We present the three cases ($i=0,1,2$) one by one. The case $i=0$
is indeed the simplest, and things get more complicated as $i$
grows. (However, it does seem that a general bound can be proven
for the number of words in ${\cal CP}_t(G)$ for any fixed value of
$\beta(\cdot)$.) We first note that $A_T$, the (infinite)
adjacency matrix of $T$, is a self-adjoint bounded operator on the
Hilbert space $l_2(V(T))$. Consequently, its operator norm equals its spectral
radius, i.e. $\|A_T\|=\rho$. (The same argument shows that
$\|A_T^{~l}\|=\rho^l$ for any integer $l>0$). For every
$v_1,v_2\in V(T)$ the number of paths of length $l$ from $v_1$ to
$v_2$ is $A_T^{~l}(v_1,v_2)$, which can be bounded by
$\|A_T^{~l}\| = \rho^l$.

For every $x\in V(G)$, we arbitrarily choose some vertex
$v_1=v_1(x)$ in the fiber of $x$ in $T$. For every path $\gamma$
in $G$ that starts at $x$, we consider the lift of $\gamma$ that
starts at $v_1$. We denote the tail of the lifted path by
$v_\gamma= v_\gamma(x)$.

For $i=0$, recall that $\beta(w)=0 \Leftrightarrow w$ reduces to 1
(Lemma \ref{lem:phi=beta=0}). Thus, every $w \in {\cal CP}_t(G)$
with $\beta(w)=0$ corresponds to some path in $G$ of length $t$
which reduces to 1 (a nullhomotopic path). But
these are exactly the paths which lift to closed paths in $T$ as
well. Thus:

\begin{eqnarray} \label{eq:n-words-beta=0}
|\{w\in {\cal CP}_t(G)~:~\beta(w)=0\}| &=& \sum_{x \in V(G)}
A_T^{\;t}(v_1,v_1) \leq \nonumber \\
&\leq& \sum_{x \in V(G)}\rho^t = |V(G)|\rho^t
\end{eqnarray}

For $i=1$ we want to count the number of imprimitive words in
${\cal CP}_t(G)$. If $w$ is imprimitive, then $w=u^d$ (equality in
$\mathbf{F}_k$) for some $u \in \mathbf{F}_k$ and $d \geq 2$.
Suppose that the path of the cyclically reduced form of $u$
in $G$ starts at $x \in V(G)$. Since the path of $w$ visits $x$,
there is some cyclic shift of $w$ that starts at $x$. Thus, by
adding a factor of $|w|=t$ to our eventual bound, we can assume
$w$ begins at $x$.

We now let $\gamma$ be the path in $G$ of the cyclically reduced
form of $u$ (a loop from $x$ to itself). We divide the path of $w$
through $G$ to three parts:
\begin{enumerate}
\item a path homotopic to $\gamma$ of length $l_1$ \item a path
homotopic to $\gamma$ of length $l_2$ \item a path homotopic to
$\gamma^{d-2}$ of length $l_3$
\end{enumerate}
with $l_1 + l_2 + l_3 = t$.

These three parts lift to the following paths in $T$:
\begin{enumerate}
\item a path from $v_1$ to $v_\gamma$ of length $l_1$ \item a path
from $v_1$ to $v_\gamma$ of length $l_2$ \item a path from $v_1$
to $v_{\gamma^{d-2}}$ of length $l_3$
\end{enumerate}

Thus we can bound the total number of imprimitive words in ${\cal
CP}_t(G)$ as follows:

\begin{eqnarray*}
&&|\{w\in {\cal CP}_t(G)~:~\beta(w)=1\}| \leq \\
&&\leq t \cdot \sum_{d=2}^t \sum_{x \in V(G)}
\sum_{\substack{\gamma \textrm{ is a reduced} \\ \textrm{loop from
$x$ to $x$}}} \sum_{l_1 + l_2 + l_3 = t} A_T^{~l_1}(v_1,v_\gamma)
A_T^{~l_2}(v_1,v_\gamma) A_T^{~l_3}(v_1,v_{\gamma^{d-2}})
\end{eqnarray*}
(the initial factor of $t$ accounts for the cyclic shift of $w$).

Using the symmetry of $A_T$, the inequality $A_T^{~l_3}(v_1,
v_{\gamma^{d-2}}) \leq \rho^{l_3}$ and changing the order of
summations, we obtain:
\begin{eqnarray} \label{eq:n-words-beta=1}
&&|\{w\in {\cal CP}_t(G)~:~\beta(w)=1\}| \leq \nonumber \\ &&\leq
t \cdot \sum_{d=2}^t \sum_{x \in V(G)} \sum_{l_1 + l_2 + l_3 = t}
\rho^{l_3} \sum_{v_\gamma \in Fib(x)\setminus \{v_1\} \subset
V(T)}
A_T^{~l_1}(v_1,v_\gamma) A_T^{~l_2}(v_\gamma,v_1) \leq \nonumber \\
&&\leq t(t-1) \sum_{x \in V(G)} \sum_{l_1 + l_2 + l_3 = t}
\rho^{l_3} A_T^{~l_1+l_2}(v_1,v_1) \nonumber \\
&&\leq t(t-1) \sum_{x \in V(G)} \sum_{l_1 + l_2 + l_3 = t}
\rho^{l_3} \rho^{~l_1+l_2} \nonumber \\
&& = t(t-1)|V(G)|\binom{t}{2}\rho^t \leq |V(G)|t^4\rho^t
\end{eqnarray}

(The crux of the matter in this calculation is the second step which
eliminates the need to sum over closed paths $\gamma$). \\

Next we bound the number of words in ${\cal CP}_t(G)$ with
$\beta(w)=2$. To this end we introduce a lemma that deals with quotient graphs
of smallest characteristic among all type-B quotients.
Clearly, it is such quotients that determine $\beta(w)$.

Let $G$ be a graph, $w \in {\cal CP}_t(G)$ and
$\Gamma \in {\cal Q}_w$. Each label $s_i$ that
appears in a block that is
associated with a vertex in $\Gamma$ corresponds
to a vertex in $G$. Therefore, a vertex in $\Gamma$ may correspond
to several vertices in $G$.
We next show that for $\Gamma$ as above
each vertex in $\Gamma$ corresponds to a single
vertex from $G$.

\begin{lem} \label{lem:x_in_Gamma_or_in_G}
Let $G$ be a graph and $w \in {\cal CP}_t(G)$.
If $\Gamma \in {\cal Q}_w$ has type B and $\chi(\Gamma) = \beta(w)$,
then all labels that appear in a block from
a vertex in $\Gamma$ correspond to the same vertex in $G$.
\end{lem}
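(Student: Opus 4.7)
My plan is proof by contradiction. Suppose $\Gamma \in {\cal Q}_w$ has type B with $\chi(\Gamma) = \beta(w)$, yet some block of $\Gamma$ contains labels $s_h, s_l$ projecting to distinct vertices of $G$. I will construct a strictly finer quotient $\Gamma' \in {\cal Q}_w$ that is still type B and satisfies $\chi(\Gamma') < \chi(\Gamma)$, contradicting the minimality in the definition of $\beta(w)$.

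The construction takes $\Gamma'$ to be the common refinement of $\Gamma$ and $\pi$, where $\pi$ is the partition of $\{s_0,\ldots,s_{|w|}\}$ in which two labels are equivalent exactly when they correspond to the same vertex of $G$. Since $w \in {\cal CP}_t(G)$, both $G$-endpoints of every letter of $w$ are determined by the letter itself, from which one verifies directly that $\pi$ is realizable; because realizability is a conjunctive ``iff'' condition, the common refinement of two realizable partitions is again realizable, so $\Gamma' \in {\cal Q}_w$. By hypothesis $\Gamma'$ strictly refines $\Gamma$. The core accounting is then Euler-characteristic bookkeeping: every trail step labeled $g_j^{\alpha}$ has both endpoints pinned by $\pi$ to the unique pair of $\pi$-subblocks determined by the $G$-endpoints of $g_j^{\alpha}$, so each edge of $\Gamma$ has a single preimage edge in $\Gamma'$, giving $e(\Gamma') = e(\Gamma)$. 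On the other hand $v(\Gamma') - v(\Gamma) = \sum_B (|\pi(B)|-1)$, strictly positive since some block is inconsistent. Therefore $\chi(\Gamma') = \chi(\Gamma) - (v(\Gamma')-v(\Gamma)) < \chi(\Gamma) = \beta(w)$.

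To close the argument I need $\Gamma'$ to be type B. If instead $\Gamma'$ had a smallest generating set $T' \ni \{s_0, s_{|w|}\}$ of size $\chi(\Gamma')$, I would augment it by $v(\Gamma')-v(\Gamma)$ \emph{bridging pairs} $T''$ — one for each additional identification required to fuse the $\pi$-subblocks inside each $\Gamma$-block back into a single $\Gamma$-block. The resulting set has size $\chi(\Gamma') + (v(\Gamma')-v(\Gamma)) = \chi(\Gamma)$, generates $\Gamma$, and still contains $\{s_0, s_{|w|}\}$, so by Lemma \ref{lem:char=mgps} it is a smallest generating set of $\Gamma$ witnessing that $\Gamma$ has type A, contradicting our hypothesis. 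The step I anticipate being most delicate is the edge-count equality $e(\Gamma') = e(\Gamma)$: a priori, refining a vertex partition can split edges and drive $\chi$ upward, and the argument works only because $\pi$ is compatible with the $G$-endpoints of every letter.
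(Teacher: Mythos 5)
Your proof is correct and follows the same strategy as the paper's: from a hypothetical inconsistent $\Gamma$, construct a strictly finer realizable type-B quotient of smaller characteristic, contradicting the minimality $\chi(\Gamma)=\beta(w)$. The only difference is cosmetic — you refine all inconsistent blocks at once by taking the common refinement of $\Gamma$ with the vertex-projection partition $\pi$ (and accordingly must rebuild a type-A generating set of $\Gamma$ from one of $\Gamma'$ by adding $v(\Gamma')-v(\Gamma)$ bridging pairs), whereas the paper splits a single inconsistent block into two parts and adds one bridging pair; the decisive observation in both arguments, that the refinement preserves realizability and does not increase the edge count because each letter $g_j^{\pm 1}$ has uniquely determined endpoints in $G$, is identical.
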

\begin{proof}
The proof proceeds by showing that
otherwise there is another type-B quotient in ${\cal Q}_w$ with
smaller characteristic. Indeed,
let $\{s_{i_1},\ldots,s_{i_r}\}$ be a block in $\Gamma$
where the labels $\{s_{i_1},\ldots,s_{i_k}\}$ ($k<r$) correspond
to the vertex $v$ in $G$, while the labels $\{s_{i_{k+1}},\ldots,s_{i_r}\}$
correspond to other vertices in $G$. Define the partition
$\bar{\Gamma}$ by splitting this block to
$\{s_{i_1},\ldots,s_{i_k}\}$ and $\{s_{i_{k+1}},\ldots,s_{i_r}\}$.
We claim that this is (i) a realizable quotient in ${\cal Q}_w$
(ii) of characteristic $\chi(\Gamma)-1$ and (iii) of type B as
well.

To see (i), recall that every letter in $w$ corresponds to some
edge in $G$. Since the two parts of the split block correspond to
disjoint sets of vertices in $G$, there is no $j$ such that both
of them are heads (or tails) of a $j$-edge. Thus, the
realizability of $\Gamma$ yields the realizability of
$\bar{\Gamma}$. In deriving $\bar{\Gamma}$ from $\Gamma$
we have increased the number of vertices by one with no
additional edges,
whence (ii) is proved. To show (iii), note that any generating set of
$\bar{\Gamma}$ can be extended to a generating set of $\Gamma$ by
adding \{$s_{i_1},s_{i_{k+1}}\}$, so that if $\bar{\Gamma}$ has
type A, so does $\Gamma$.
\end{proof}


As we saw (Lemmas
\ref{lem:beta-invrt-cyc-shft} and \ref{lem:beta-invrt-red})
if $w'$ is the cyclic reduction of $w$, then $\beta(w') = \beta(w)$.
Moreover, every type-B $\Gamma \in
{\cal Q}_w$ with $\chi(\Gamma)=\beta(w)$ can be generated from
some $\Gamma' \in {\cal Q}_{w'}$ with $\chi(\Gamma') =
\chi(\Gamma)$, through a series of $\delta$-operations (as
in Lemma \ref{lem:beta-invrt-red}). Since $w'$ is cyclically
reduced, every vertex in $\Gamma'$ has degree $\geq 2$. (Indeed,
$\Gamma'$ is obtained from $\Gamma$ by successive elimination of
vertices of degree one in the graph).

Now assume $\beta(w)=\chi(\Gamma)=2$. There are three possible
shapes that $\Gamma'$ can have: Figure-Eight, Barbell or Theta
(see Figure \ref{fig:shapes-of-chi=2}). For a cost of an
additional factor of $t$ as above, we may assume the path of $w$
through $\Gamma$ begins at the vertex $x$ specified in each of the
diagrams. (More accurately, it begins at the vertex of $\Gamma$
corresponding to $x$ through the series of $\delta$-operations.)
By Lemma \ref{lem:x_in_Gamma_or_in_G}, $x$ corresponds to a unique
vertex in $G$ which we call $x$ as well (by abuse of notation).
This vertex $x$ in $G$ marks the starting point of $w$. We now
analyze each case separately, and using the notations in Figure
\ref{fig:shapes-of-chi=2}, we trace the path of $w'$ through
$\Gamma'$.

\begin{figure}[htb]
\centering
\includegraphics[width=1\textwidth]{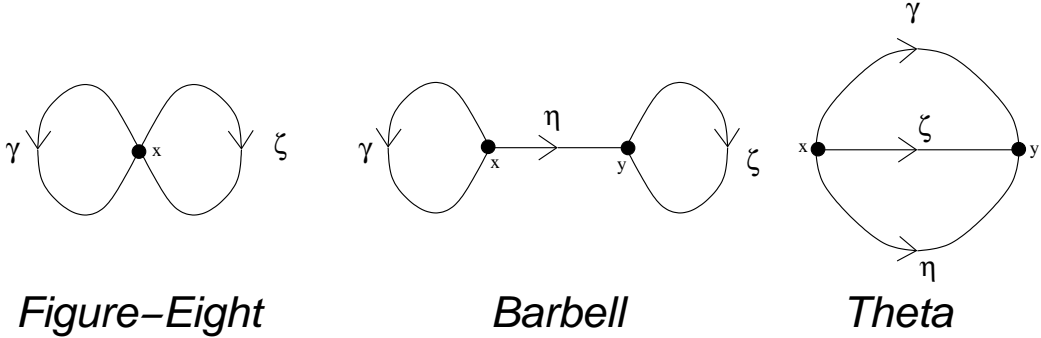}
\caption{The three possible shapes of quotient graphs of a
cyclically reduced word with characteristic 2. The edges $\gamma$,
$\zeta$ and $\eta$ denote subwords.} \label{fig:shapes-of-chi=2}
\end{figure}

Assume first that $\Gamma'$ has the shape
of a Figure-Eight. In this case $w'$ can be expressed using
$\gamma,\zeta$ in a reduced expression in which each of them
appears at least twice (Lemma \ref{lem:2-preimages}). For any fixed
reduced expression in $\gamma,\zeta$, we specify certain two
appearances of $\gamma$ and certain two appearances of $\zeta$.
The path of $w$ through $\Gamma$ can be then divided to at most
seven parts: four parts for the chosen appearances of $\gamma$ and
$\zeta$, and three parts for sequences of the rest of the
expression (we can always choose the
first two characters in the expression, but we may
be forced to have spaces
between the second and the third, between the third and the fourth
and after the fourth). We then proceed to a calculation
as in \eqref{eq:n-words-beta=1}.

To illustrate, let $w'=\gamma \gamma \gamma \zeta \gamma^{-1}
\zeta^{-1} \zeta^{-1} \gamma$. We split $w'$ to seven parts as
shown in the following bracketing: $w'=(\gamma)(\gamma)(\gamma)
(\zeta)(\gamma^{-1})(\zeta^{-1})(\zeta^{-1} \gamma)$. The
corresponding lengths are: $l_1$ steps for the first $\gamma$,
$l_2$ for the second, $l_3$ steps for the next $\gamma$ (which is
considered ``a space''), $l_4$ steps for $\zeta$, $l_5$ for the
space $\gamma^{-1}$, $l_6$ for $\zeta^{-1}$ and $l_7$ for the
space $\zeta^{-1} \gamma$. We can now bound the total number of
words which reduce cyclically (and with a possible cyclic shift)
to this expression in some $\gamma$ and $\zeta$. (The first factor
of $t$ in the calculation
accounts for the initial cyclic shift of $w$.)

\begin{eqnarray*}
& |\{w \in {\cal CP}_t(G)~:~w\textrm{ reduces cyclically to }
\gamma \gamma \gamma \zeta \gamma^{-1} \zeta^{-1}
\zeta^{-1} \gamma \textrm{ for some } \gamma, \zeta \}|& \\
 &\leq t \cdot \sum_{x \in V(G)} \sum_{\gamma,\zeta} \sum_{l_1 + \ldots + l_7 = t}
 A_T^{~l_1}(v_1,v_\gamma)
A_T^{~l_2}(v_1,v_\gamma) A_T^{~l_3}(v_1,v_\gamma)
A_T^{~l_4}(v_1,v_\zeta)  \cdot &\\
& \cdot A_T^{~l_5}(v_\gamma,v_1) A_T^{~l_6}(v_\zeta,v_1)
A_T^{~l_7}(v_1,v_{\zeta^{-1} \gamma})&
\end{eqnarray*}
(The second sum is over all $\gamma$ and $\zeta$ -
two distinct non-empty reduced loops from $x$ to itself in $G$.)
We proceed as before:

\begin{eqnarray*}
\leq t \cdot \sum_{x \in V(G)} \sum_{l_1 + \ldots + l_7 = t}
\rho^{l_3 + l_5 + l_7} & \sum_{v_\gamma \in Fib(x)\setminus
\{v_1\}} A_T^{~l_1}(v_1,v_\gamma) A_T^{~l_2}(v_\gamma,v_1) \cdot
\\ \cdot & \sum_{v_\zeta \in Fib(x)\setminus \{v_1\}} A_T^{~l_4}(v_1,v_\zeta)
A_T^{~l_6}(v_\zeta,v_1) \leq \\
\leq t \cdot \sum_{x \in V(G)} \sum_{l_1 + \ldots + l_7 = t}
\rho^{l_3 + l_5 + l_7} &\rho^{l_1 + l_2} \rho^{l_4 + l_6} \leq
|V(G)|t^7 \rho^t
\end{eqnarray*}

This bound was calculated for a specific reduced expression in
$\gamma, \zeta$. The total number of possible expressions is less
than $3^t$. (w.l.o.g every expression begins with $\gamma$, and it
contains a total of between four and $t$ components. For $r$
components there are at most $3^{r-1}$ possible continuations, and
$3^3 + 3^4 + \ldots + 3^{t-1} < 3^t$). Thus we can bound the total
number of words in ${\cal CP}_t(G)$ with $\beta(w)=2$ and which
have a type-B Eight-Figure quotient graph, by $|V(G)|t^7 3^t
\rho^t$.

For the Barbell and Theta the analysis is similar, but their
contribution is negligible relative to the contribution of the
Figure-Eight. This time we construct a reduced expression in three
subwords: $\gamma, \zeta$, and $\eta$, but the possible number of
expressions is bounded by $2^t$ (the same argument as above, only
this time every subword has only two possible subsequent
subwords). We need to specify two occurrences of each of the three
letters this time, so we may need to split the path of $w'$ to
$6+5=11$ parts. The bound is therefore $|V(G)|t^{11} 2^t \rho^t$,
the asymptotic comparison is clearly $|V(G)|t^{11} 2^t \rho^t \ll
|V(G)|t^7 3^t \rho^t$.

To illustrate the calculation, consider the following (reduced) expression
for the Barbell: $w' = \gamma \eta \zeta \zeta \zeta \eta^{-1}
\gamma^{-1} \eta \zeta \eta^{-1} \gamma$. The number of words in
${\cal CP}_t(G)$ which reduce cyclically to such an expression can
be bounded by:
\begin{eqnarray*}
\leq& t \cdot \sum_{x,y,\gamma,\zeta,\eta} \sum_{l_1, \ldots,l_8}&
A_T^{~l_1}(v_1(x),v_\gamma(x)) A_T^{~l_2}(v_1(x),v_\eta(x))
A_T^{~l_3}(v_1(y),v_\zeta(y)) \cdot \\
& &\cdot A_T^{~l_4}(v_1(y),v_\zeta(y))
A_T^{~l_5}(v_1(y),v_\zeta(y)) A_T^{~l_6}(v_\eta(x),v_1(x)) \cdot \\
&& \cdot A_T^{~l_7}(v_\gamma(x),v_1(x))
A_T^{~l_8}(v_1(x),v_{\eta \zeta \eta^{-1} \gamma}(x)) \\
\leq& t \cdot \sum_{l_1, \ldots,l_8} \rho^{l_5 + l_8} & \sum_x
\rho^{l_1 + l_7} \sum_y \rho^{l_3+ l_4} \sum_{v_\eta \in Fib(y)}
A_T^{~l_2}(v_1(x),v_\eta) A_T^{~l_6}(v_\eta,v_1(x)) \\
\leq& t \cdot \sum_{l_1, \ldots,l_8} \rho^{t - l_2 - l_6} & \sum_x
\sum_{v_\eta \in V(T)}
A_T^{~l_2}(v_1(x),v_\eta) A_T^{~l_6}(v_\eta,v_1(x)) \\
\leq& |V(G)| t^8 \rho^t
\end{eqnarray*}
(Here $x$ and $y$ are vertices of $G$, $\gamma$ (resp. $\zeta$)
is a reduced loop from $x$ (resp. $y$) to itself, and
$\eta$ a reduced path from $x$ to $y$, and $l_1 + \ldots +l_8 =
t$).

To sum up, we have for large enough $t$:
\begin{eqnarray} \label{eq:n-words-beta=2}
|\{w\in {\cal CP}_t(G)~:~\beta(w)=2\}| \leq (1+o(1)) |V(G)| t^7
3^t \rho^t
\end{eqnarray}

\begin{remark} \label{rem:bound_for_beta=r}
These calculations suggest that for every integer $r\geq2$, the
dominant figure among quotient graphs of characteristic $r$ (after
cyclic reduction) is a bouquet with $r$ loops. Thus, for large
enough $t$, the number of words in ${\cal CP}_t(G)$ with
$\beta(w)=r$ is less than $(1+o(1)) |V(G)| t^{4r-1} (2r-1)^t
\rho^t$.
\end{remark}

\begin{remark}
Note that this counting argument is quite wasteful, and involves a
good deal of overcounting. For instance, in the case $i=1$, we
counted each word of the form $w=u^4$ twice: once with the root
$u$ and $d=4$, and once with the root $u^2$ and $d=2$. It seems
that in
fact, we have bounded $\sum_{\substack{w \in {\cal CP}_t(G)\\
\beta(w)=i}} a_i(w)$.
\end{remark}

\subsection{The Proof of Theorem \ref{ther:2/3}}
We now have all the necessary
ingredients for a proof of Theorem
\ref{ther:2/3}. Recall that in \eqref{eq:split-by-beta}, we split
the set of words ${\cal CP}_t(G)$ to four subsets according to the
value of $\beta(\cdot)$. In Table \ref{tab:split-by-beta}, we
collect the following information for each subset: A bound 
on the subset's size
and a bound on the value of $\Phi_w(n)$ for the words in the
subset. This table highlights
the significance of the proved and unproved relations
between $\phi(\cdot)$ and $\beta(\cdot)$ to the analysis of the sum in
\eqref{eq:split-by-beta}. The value of $\phi(\cdot)$ yields
the bounds in the right column of the table, whereas
$\beta(\cdot)$ is used to derive the bounds in the middle one.

The number of words with $\beta(w)=0$ was bounded in
\eqref{eq:n-words-beta=0}, and the bound for $\Phi_w(n)$ is from
Lemma \ref{lem:analyze-o(1)} (since $a_0(w)=0$
whenever $\beta(w)=0$). In \eqref{eq:n-words-beta=1} we
bounded the number of imprimitive words in ${\cal CP}_t(G)$, and
Lemma \ref{lem:analyze-o(1)} yields again a bound for $\Phi_w(n)$
for imprimitive $w$. (By Lemma \ref{lem:n_manot}, $a_1(w) \leq t^2$).
The number of words with $\beta(w)=2$ was
bounded in \eqref{eq:n-words-beta=2} (for $t$ large enough), and
this time we bound $\Phi_w(n)$ using the fact that $\phi(w)\geq 2$ for
every word in this subset. The size of the fourth set is bounded
by the total size of ${\cal CP}_t(G)$
\[|{\cal CP}_t(G)| = tr(A_G^{\;~t}) =
\sum_{i=1}^{|V(G)|}\lambda_i^{\;t} \leq |V(G)|\lambda_1^{\;t},\]
and the bound on $\Phi_w(n)$ in this case comes from Lemma
\ref{lem:beta>=3} and the analysis of $a_i(w)$ in the proof of
Lemma \ref{lem:analyze-o(1)}.

\begin{table}[htb]
\centering
\begin{tabular}[c]{c|c|c}
 The Value of $\beta(w)$ & The size of the subset $\leq$ & $\Phi_w(n) \leq $ \\
\hline 0  & $|V(G)|\cdot \rho^t$ & $1 + \frac{t^4}{n}$\\
1 & $|V(G)|t^4 \cdot \rho^t$ & $\frac{1}{n}\left( t^2 + \frac{t^6}{n} \right)$ \\
2 & $|V(G)|t^7 \cdot 3^t \cdot \rho^t$ & $\frac{1}{n^2}\left( t^4 + \frac{t^8}{n} \right) $ \\
$\geq 3$ & $|V(G)|\cdot \lambda_1^{\;t}$ & $\frac{1}{n^3}\left( 3t^6 + \frac{t^{10}}{n} \right) $ \\

\end{tabular} \caption{A bound for the size of each of the four
subsets of ${\cal CP}_t(G)$, and a bound for the value of
$\Phi_w(n)$ for each word in the subset.}
\label{tab:split-by-beta}
\end{table}

We now select $t$ to minimize the resulting upper bound on
$\mathbb{E}(\mu_{max}^{\;~~~t})$. It is not hard to see that the optimal
$t$ is $\Theta(\log n)$. Consequently, all terms $\frac{t^4}{n},
\ldots, \frac{t^{10}}{n}$ are $o_n(1)$ and can be ignored. The
information in Table \ref{tab:split-by-beta} is now combined with
\eqref{eq:split-by-beta} to yield [for $t=\Theta(\log n)]$:

\begin{eqnarray*}
\mathbb{E}(\mu_{max}^{\;~~~t}) & \leq & (1+o_n(1))\cdot \\
&&\left( n \cdot |V(G)|\cdot
\rho^t +  |V(G)|\cdot t^6 \cdot \rho^t + |V(G)|\cdot t^{11} \cdot 3^t \cdot \frac{\rho^t}{n} +
|V(G)| \cdot 3t^6 \cdot \frac{\lambda_1^{\;t}}{n^2} \right) \\
& \leq & (1+o_n(1))|V(G)|\cdot t^{11}\cdot4\left[max(\rho\cdot n^{1/t},
\rho,3\rho\cdot n^{-1/t}, \lambda_1\cdot n^{-2/t})\right]^t
\end{eqnarray*}

In order to balance the first and the last terms, we set $n^{1/t} \approx
\rho^{-1/3}\lambda^{1/3}$ (recall that $t$ is an even integer, so
we cannot guarantee exact equality here). Since $t \to \infty$ with
$n$, the constant and polynomial factors can be replaced
by $(1+o_n(1))^t$. We obtain

\begin{eqnarray*}
\mathbb{E}(\mu_{max}^{\;~~~t}) &\leq&
\left[\lambda_1^{\;1/3} \rho^{2/3} \cdot \left(1+o_n(1)\right) \cdot
 max\left(1, \left(\frac{\rho}{\lambda_1}\right)^{1/3},
3\left(\frac{\rho}{\lambda_1}\right)^{2/3}, 1\right)\right]^t \\
&=& \left[\lambda_1^{\;1/3} \rho^{2/3} \cdot \left(1+o_n(1)\right) \cdot
 max\left(1, 3\left(\frac{\rho}{\lambda_1}\right)^{2/3}\right)\right]^t
\end{eqnarray*}
(the equality holds because $\rho \leq \lambda_1$ is always true, see
Remark \ref{rem:rho<=lambda1}).

The statement of Theorem \ref{ther:2/3} now follows from a standard application
of Markov's inequality. Obviously, for every $\epsilon>0$,
$3\cdot\lambda_1^{\;1/3} \rho^{2/3}+\epsilon$
may serve as an absolute upper bound.

\begin{remark}
Here is a sketch of our proposed approach to Friedman's
Conjecture. Say we seek to prove that all new eigenvalues are,
almost surely $O(\rho^{1-\epsilon} \lambda_1^\epsilon)$ for every
$\epsilon>0$. To prove a bound of $O(\rho^\frac{r}{r+1}
\lambda_1^\frac{1}{r+1})$ one would have to show that $\beta(w)=i
\Leftrightarrow \phi(w)=i$ for every $i \leq r$. We believe that
this can be shown in a way similar to our proof of Theorem
\ref{ther:2/3}. In addition, it would be necessary to follow up on
Remark \ref{rem:bound_for_beta=r}, and establish a bound on the
number of words $w \in {\cal CP}_t(G)$ with given $\beta(w)$.
\end{remark}


\section{The Number of $L$-Cycles in $w(s_1,\ldots,s_k)$} \label{sec:nica}

In this section we introduce a new conceptual and relatively
simple proof of a Theorem of A. Nica \cite{nica}. In
\eqref{eq:xwn} we defined the random variable $X_w^{(n)}$ which
counts the number of fixed points in $w(\sigma_1,\ldots,\sigma_k)$
for fixed $w$. We extend this concept and for every integer
$L\geq1$ denote by $X_{w,L}^{(n)}$ a random variable on
$S_n^{\;k}$ which is defined by:
\begin{equation} \label{eq:xwln}
X_{w,L}^{(n)}(\sigma_{1},\ldots,\sigma_{k}) = \textrm{\# of cycles
of length $L$ of } w(\sigma_{1}, \ldots , \sigma_{k})
\end{equation}
($X_{w,1}^{(n)}$ is a new notation for $X_w^{(n)}$). \\

Nica's theorem says that the variables $X_{w,L}^{(n)}$ have, for
fixed $w$ and $L$ and for $n\to\infty$, a limit distribution which
can be computed explicitly. (Unless otherwise stated, the
distribution on $S_{n}^{\;k}$ is always uniform.)

\begin{ther} \label{ther:limit-xwln}
Let $1 \neq w \in \mathbf{F}_{k}$ and suppose that $w=u^{d}$, with
$u$ primitive. Then for every integer $L \geq 1$, the random
variable $X_{w,L}^{(n)}$ defined in (\ref{eq:xwln}) has, for
$n\to\infty$, a limit distribution, which is given by:
\begin{equation} \label{eq:conv}
X_{w,L}^{(n)} \stackrel{dis}{\to} \sum_{h\in H(d,L)}hZ_{1/Lh}
\end{equation}
where $H(d,L)$ is the set
\begin{equation} \label {eq:hdl}
H(d,L)=\{h>0: ~~ h|~d \textrm{ and gcd}(\frac{d}{h},L)=1\},
\end{equation}
$Z_{m} \sim Poi(m)$ (a variable with Poisson distribution with
parameter $m$), and ``$\stackrel{dis}{\to}$'' denotes convergence
in distribution.

In particular, this limit distribution depends only on $d$ and $L$
(and not on $u$).
\end{ther}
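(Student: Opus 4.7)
The plan is to reduce Theorem~\ref{ther:limit-xwln} to the case of a primitive word, and then to establish the primitive case by the method of moments applied to the full joint distribution of cycle counts.

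First I would prove a purely permutation-theoretic identity. For any permutation $\pi$ and any integers $d,L\geq 1$, each cycle of $\pi$ of length $M$ decomposes under $\pi^{d}$ into $\gcd(M,d)$ cycles of length $M/\gcd(M,d)$. Thus the $L$-cycles of $\pi^{d}$ arise exactly from the $M$-cycles of $\pi$ with $M/\gcd(M,d)=L$. Writing $h=\gcd(M,d)$, this forces $M=Lh$, $h\mid d$, and $\gcd(L,d/h)=1$, i.e.\ $h\in H(d,L)$, and each such $Lh$-cycle of $\pi$ yields $h$ cycles of length $L$ in $\pi^{d}$. Applied with $\pi=u(\sigma)$, so $\pi^{d}=w(\sigma)$, this gives
\[
X_{w,L}^{(n)} \;=\; \sum_{h\in H(d,L)} h\cdot X_{u,Lh}^{(n)}.
\]
The right-hand side of~\eqref{eq:conv} is obtained from the same linear combination of the independent variables $Z_{1/(Lh)}$, so the theorem reduces to showing that, for every primitive $u$, the vector $\bigl(X_{u,M}^{(n)}\bigr)_{M\geq 1}$ converges in distribution to a vector of independent Poissons $(Z_{1/M})_{M\geq 1}$.

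For this primitive case I would apply the method of moments to falling factorials. With $(X)^{(r)}=X(X-1)\cdots(X-r+1)$, it suffices to prove that for all distinct $M_{1},\ldots,M_{s}$ and all $r_{1},\ldots,r_{s}\geq 0$,
\[
\mathbb{E}\!\left[\prod_{i=1}^{s}\bigl(X_{u,M_{i}}^{(n)}\bigr)^{(r_{i})}\right] \;\xrightarrow[n\to\infty]{}\; \prod_{i=1}^{s} M_{i}^{-r_{i}},
\]
since these are exactly the joint factorial moments of independent Poissons with parameters $1/M_{i}$, and Poisson variables are determined by their moments. After the normalization $(n!)^{-k}$, the left-hand side counts tuples $(\sigma_{1},\ldots,\sigma_{k})\in S_{n}^{\;k}$ equipped with an ordered choice of $r_{i}$ pairwise disjoint $M_{i}$-cycles of $u(\sigma)$ for each $i$. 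Each such configuration is described by $\sum_{i}r_{i}$ closed cyclic trails through $u$, the $j$-th of length $M_{i}|u|$, and I would extend the quotient-graph formalism of Section~\ref{sbs:exp-fix-points} to partitions of the combined trail. As in \eqref{eq:n-realizations}--\eqref{eq:fixed-points-exp}, each resulting realizable edge-coloured multigraph $\Gamma$ contributes a rational expression in $n$ whose leading order is $n^{\,v_{\Gamma}-e_{\Gamma}}$.

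The dominant contribution comes from the configurations in which every cyclic trail traces an embedded cycle of length $M_{i}|u|$ and the trails are pairwise vertex-disjoint. Such $\Gamma$'s are disjoint unions of simple cycles, satisfying $e_{\Gamma}=v_{\Gamma}$, and a direct count of labelings shows that they contribute exactly $\prod_{i} M_{i}^{-r_{i}}+o(1)$. The main obstacle, and the only place primitivity of $u$ is used, is to prove that every \emph{other} realizable $\Gamma$ has a connected component with $e_{\Gamma}>v_{\Gamma}$ and therefore contributes $O(1/n)$. This is the multi-trail analogue of the fact that $\phi(u)\geq 2$ for primitive $u$ (Lemma~\ref{lem:phi=beta=1}): any non-trivial coincidence within one cyclic trail, or between two different cyclic trails, can be unwound to exhibit $u$ as a proper power in $\mathbf{F}_{k}$, contradicting primitivity. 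Once this dichotomy is established, the factorial-moment computation closes and Theorem~\ref{ther:limit-xwln} follows.
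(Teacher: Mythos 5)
Your argument is correct and it takes a genuinely different and arguably cleaner route than the paper. The paper carries out a direct computation of the factorial moments of $X_{w,L}^{(n)}$ for general $w=u^d$, identifying the limit of $\mathbb{E}\bigl([X_{w,L}^{(n)}]_r\bigr)$ with $L^{-r}\lvert{\cal C}_{w,L,r}\rvert$ and then, in the most delicate part of the argument, matching $\lvert{\cal C}_{w,L,r}\rvert$ against $\mathbb{E}\bigl([Y_{d,L}]_r\bigr)$ via a generating-function recursion in $r$ (Section~\ref{end-of-proof}). You instead exploit the deterministic permutation identity $X_{w,L}^{(n)}=\sum_{h\in H(d,L)} h\,X_{u,Lh}^{(n)}$, which reduces everything to proving that for \emph{primitive} $u$ the finite vector $\bigl(X_{u,Lh}^{(n)}\bigr)_{h\in H(d,L)}$ converges to independent Poissons $\bigl(Z_{1/Lh}\bigr)_h$; then the continuous mapping theorem does the rest. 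What this buys: the primitive case is combinatorially trivial (the only quotient of the universal graph that is a disjoint union of cycles with each $L$- or $M$-cycle trail landing in a distinct block is the universal graph itself, so the relevant $\lvert{\cal C}\rvert=1$ and the generating-function recursion disappears entirely). What it costs: you now need \emph{joint} convergence, not just convergence of each $X_{u,M}^{(n)}$ marginally. The paper never establishes joint convergence — the closing remark of Section~\ref{sec:nica} merely speculates that such a result should hold — so your route requires proving something slightly stronger than the paper does. That extra step is not hard (the joint falling-factorial moments admit the same quotient-graph analysis, the dominant configurations are disjoint unions of simple cycles since for primitive $u$ any nontrivial coincidence within or between cyclic trails forces a component of Euler characteristic $\geq 2$, exactly as in the proof of Lemma~\ref{lem:phi approx beta=2} and Section~\ref{sbsbs:cwlr}, and a product of Poissons is determined by its joint moments), but you should state it as a separate lemma and prove it with care, since it is the one place primitivity enters and the step where a trail of length $M_i|u|$ could conceivably wrap onto a shorter cycle must be ruled out using the fact that $u$ is not a cyclic shift of itself or of its inverse.

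One small caution: when you say the dominant $\Gamma$'s ``contribute exactly $\prod_i M_i^{-r_i}+o(1)$,'' the count should be done carefully with starting-point normalization as in \eqref{eq:moment-formula1}--\eqref{eq:pre-psi}. Each $M_i$-cycle has $M_i$ possible designated starting points, and that factor is what produces $M_i^{-1}$ per cycle; since in the primitive case $|{\cal C}|=1$, the computation is short, but it is worth making explicit so that the reader sees where the $M_i^{-r_i}$ comes from.
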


Note that in the case that $w$ is primitive (i.e. $d=1$), the
limit distribution is simply Poisson with parameter $1/L$.

Our proof of Theorem \ref{ther:limit-xwln} is based on the method
of moments and provides, in particular, explicit expressions for
the moments of $X_{w,L}^{(n)}$:

\begin{cor} \label{cor:moments}
For every $1\neq w \in \mathbf{F}_{k}$, $L \geq 1$ and $r \geq 1$
there exists a rational function $\Psi_{w,L,r}$ that is defined on
a neighborhood of $0$ such that
$\mathbb{E}([X_{w,L}^{(n)}]^r)=\Psi_{w,L,r}(1/n)$ for sufficiently
large $n$. Consequently, the limit $\lim_{n \to
\infty}\mathbb{E}([X_{w,L}^{(n)}]^r)$ exists and equals
$\Psi_{w,L,r}(0)$. In addition, if $w=u^{d}$, with $u$ primitive,
then this limit equals the corresponding moment of the sum in
\eqref{eq:conv}. In particular, $\Psi_{w,L,1}(0) = \lim_{n \to
\infty}\mathbb{E}(X_{w,L}^{(n)})=\frac{|H(d,L)|}{L}$.
\end{cor}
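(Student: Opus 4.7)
The plan is to extend the quotient-graph analysis of Section \ref{sbs:exp-fix-points} from fixed points to ordered $r$-tuples of $L$-cycles, apply the method of moments, and then identify the categories that survive as $n\to\infty$. First, introduce the auxiliary variable $\widetilde{Y}_{w,L,r}^{(n)}$ counting ordered $r$-tuples $(s^{(1)},\ldots,s^{(r)})$ of ordered sequences $s^{(i)}=(x_{0}^{(i)},\ldots,x_{L-1}^{(i)})$ of $L$ distinct points satisfying $w(\sigma_{1},\ldots,\sigma_{k})(x_{j}^{(i)})=x_{(j+1)\bmod L}^{(i)}$. Since each $L$-cycle admits exactly $L$ cyclic orderings and the starting points can be chosen independently for the different $i$, we have $\widetilde{Y}_{w,L,r}^{(n)}=L^{r}(X_{w,L}^{(n)})^{r}$, reducing the task to computing $\mathbb{E}[\widetilde{Y}_{w,L,r}^{(n)}]$.

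Each such tuple gives rise to $r$ closed trails, each of length $L|w|$ and obtained by concatenating $L$ traversals of the letters of $w$, so the categorization of Section \ref{sbs:exp-fix-points} applies to all these trails simultaneously. For each realizable quotient graph $\Gamma$, the number of compatible labeling-permutation pairs equals $n(n-1)\cdots(n-v_{\Gamma}+1)\prod_{j=1}^{k}(n-e_{\Gamma}^{j})!$ exactly as in \eqref{eq:n-realizations}, and dividing by $(n!)^{k}$ yields a rational function of $n$. Since the set of realizable $\Gamma$'s depends only on $w$, $L$, and $r$ and is finite, the full sum is a finite sum of rational functions. This gives $\mathbb{E}[(X_{w,L}^{(n)})^{r}]=\Psi_{w,L,r}(1/n)$ for a rational $\Psi_{w,L,r}$ defined on a neighborhood of $0$ and valid for every $n\geq rL|w|$, from which existence of the limit and the identity $\lim_{n\to\infty}\mathbb{E}[(X_{w,L}^{(n)})^{r}]=\Psi_{w,L,r}(0)$ are immediate.

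To identify $\Psi_{w,L,r}(0)$, notice that each $\Gamma$ contributes at order $n^{v_{\Gamma}-e_{\Gamma}}$. The closed-trail structure forces every connected component of $\Gamma$ to carry closed walks with balanced in-/out-degrees per letter, hence to be strongly connected, so $e_{\Gamma}\geq v_{\Gamma}$, with equality precisely when every component is a simple directed edge-coloured cycle. Only such \emph{cycle-only} $\Gamma$'s survive the limit. Writing $w=u^{d}$ with $u$ primitive, each such component is a cycle whose edge-labelling traces a cyclic closure of $u$, and the total length is forced to be of the form $Lh$ for some $h\in H(d,L)$; the coprimality $\gcd(d/h,L)=1$ is exactly what guarantees that the $L$ chosen endpoints on the cycle are distinct. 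Matching the resulting count to the $r$-th moment of $\sum_{h\in H(d,L)}hZ_{1/Lh}$ via the standard factorial-moment formula for independent Poissons is the main obstacle: it requires a careful bijection between cycle-only $\Gamma$'s, together with ordered partitions of $\{1,\ldots,r\}$ among their components, and the multinomial expansion of $\mathbb{E}\bigl[\bigl(\sum_{h}hZ_{1/Lh}\bigr)^{r}\bigr]$, with the weight $h$ arising from the $h$ distinct $L$-cycles of $w(\sigma)$ produced by each cycle of $u(\sigma)$ of length $Lh$. For $r=1$ the bijection is immediate: there is a unique cycle-only category per $h\in H(d,L)$, each contributing $1$ to $\mathbb{E}[\widetilde{Y}_{w,L,1}^{(n)}]$ in the limit, yielding $\Psi_{w,L,1}(0)=L^{-1}|H(d,L)|=|H(d,L)|/L$.
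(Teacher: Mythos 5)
Your proposal correctly handles the first two claims (rationality of $\Psi_{w,L,r}$ and existence of the limit), and you do so by a slightly different route than the paper: you expand the raw moment directly via the auxiliary variable $\widetilde{Y}_{w,L,r}^{(n)}=L^{r}(X_{w,L}^{(n)})^{r}$, whereas the paper expands the falling-factorial moment $\mathbb{E}\big([X_{w,L}^{(n)}]_{r}\big)$ over the set ${\cal Q}_{w,L,r}$ and recovers $\Psi$ as a linear combination of the $\psi_{w,L,1},\ldots,\psi_{w,L,r}$. Both are legitimate; note, however, that the paper's choice is not cosmetic. In the factorial-moment universal graph the $rL$ marked vertices are forced into \emph{distinct} blocks, so the surviving (Euler-characteristic-one) quotients are exactly disjoint unions of cycles with the $r$ circuits distributed among them -- a structure that maps cleanly onto the multinomial expansion of $f_{Y_{d,L}}^{(r)}(1)$. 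In your raw-moment version the marked vertices of different copies may coincide, so the cycle-only quotients also include those in which two or more of the $r$ pointed circuits collapse onto one another, and the eventual bijection would have to absorb this extra bookkeeping.

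The genuine gap is precisely where you flag ``the main obstacle'': you never establish that, for general $r$, the count of surviving categories equals $L^{r}\,\mathbb{E}\big[(\sum_{h\in H(d,L)}hZ_{1/Lh})^{r}\big]$. This identification is the actual content of the corollary beyond $r=1$, and it is exactly what the paper's Section on the general case supplies, via the refinement ${\cal C}_{w,L,r}=\bigsqcup {\cal C}_{w,L,r}^{h}$, the generating functions $g_{w,L,r}^{h}$ counting quotients by ``free spots,'' the recursion $g_{w,L,r+1}^{h}=L\,(g_{w,L,r}^{h})'+t^{h-1}g_{w,L,r}^{h}$, and the matching recursion for $q_{L,r}^{h}$ coming from differentiating $f_{hZ_{1/Lh}}$. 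Naming the bijection you would need is not the same as constructing it, and for $r\ge 2$ several nonobvious facts enter -- e.g.\ that each component of a cycle-only quotient is a cyclic concatenation of copies of $u$ (which uses that $u^{-1}$ is not a subword of $u^{2}$ for primitive $u$, a lemma you do not invoke), and that a single component of length $Lh|u|$ can host up to $h$ of the $r$ marked circuits, contributing the weight $h$. Your $r=1$ verification is fine and matches the paper, but the corollary's ``in addition'' clause for general $r$ remains unproven in your write-up.
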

(Note that this is a generalization of the function $\Phi_w(n)$
defined in \eqref{eq:Phi}: $n\cdot\Phi_w(n)+1 =
\mathbb{E}(X_{w,1}^{(n)})=\Psi_{w,1,1}(1/n)$.)

The contents of Nica's work is Theorem \ref{ther:limit-xwln} and
Corollary \ref{cor:moments} for the first moment. \\

The main idea of the proof was already illustrated in Section
\ref{sec:word-maps}, where we analyzed the expectation of
$X_{w,1}^{(n)}$. Recall Equation~\eqref{eq:Phi} where $\Phi_w(n)$
is expressed as a power
series $\sum_{i=0}^\infty a_i(w)\frac{1}{n^i}$.
We already know (Lemma \ref{lem:phi=beta=0}) that when $w\neq 1$,
$a_0(w)=0$, and so $\lim_{n\to \infty} \mathbb{E}(X_{w,1}^{(n)}) =
a_1(w) + 1$. But this is exactly the
number of graphs $\Gamma \in {\cal Q}_w$ with $\chi(\Gamma)=1$
(see, for instance, Lemma \ref{lem:phi=beta=1}).

If $w$ is cyclically reduced, then the only graphs in ${\cal Q}_w$
with $\chi=1$ are cycles (again, see the proof of Lemma \ref{lem:phi=beta=1}). Such a cycle consists of a cyclic
concatenation of several copies of $u$ (the primitive word such
that $w=u^d$). The number of copies of $u$ in the cycle has to
divide $d$, hence
\[ \lim_{n\to\infty}\mathbb{E}(X_{w,1}^{(n)}) = \big| H(d,1) \big|\ =
\textrm{ \# of divisors of } d\]

But we can indeed restrict our discussion to cyclically reduced
words. The justification for this is the following. Let $1 \neq
w\in \mathbf{F}_k$, and let $w'$ be its cyclic reduction. We have
already mentioned that $w \sim w'$ and so they induce the same
distribution on $S_n$. In particular, $X_{w,L}^{(n)}$ and
$X_{w',L}^{(n)}$ are equally distributed. It is also quite evident
that $w$ and $w'$ share an identical exponent of their primitive
root (i.e., if $w=xw'x^{-1}$ for some $x\in \mathbf{F}_k$ and
$w'=u^d$ with $u$ primitive, then $xux^{-1}$ is primitive too, and
$w=(xux^{-1})^d$). Thus the validity of Theorem
\ref{ther:limit-xwln} and Corollary \ref{cor:moments} for $w'$,
yields their validity for $w$ as well.

Below, we extend this argument to obtain the limit of the
expectation of $\big[X_{w,L}^{(n)}\big]^r$ for every $L$ and $r$.
We essentially use the same way we counted fixed points in
$w(\sigma_1,\ldots,\sigma_k)$ for all $k$-tuples
$(\sigma_1,\ldots,\sigma_k) \in S_n^{\;k}$, to count $L$-cycles
and \emph{sequences} of $L$-cycles, which we call \emph{lists} of
cycles. The point is that the total number of lists of length $r$
of $L$-cycles, divided by $(n!)^k$, equals the expectation of
$\big[X_{w,L}^{(n)}\big]_r$, the $r$-th \emph{factorial} moment of
$X_{w,L}^{(n)}$. Once we know how to calculate the limits of the
factorial moments, the limits of the regular moments are at easy
reach. To finish, we show that these limits equal the
corresponding moments in the r.h.s of \eqref{eq:conv}, and use the
method of moments to conclude the proof.

\subsection{Lists of Trails and their Categories}
\label{sbs:categories}

We begin by generalizing some of the notions from Section
\ref{sbs:exp-fix-points}. Let $1 \neq w \in \mathbf{F}_k$ be cyclically
reduced, $n \geq 1$ an integer, $s_0 \in \{1,\ldots,n\}$, and
$\sigma_1,\ldots,\sigma_k \in S_n$. The \emph{trail of $s_0$
through $w(\sigma_1,\ldots,\sigma_k)$} is the sequence of images
of $s_0$ under $w(\sigma_1,\ldots,\sigma_k)$. Namely, if
$w=g_{i_{1}}^{\;\alpha_{1}}g_{i_{2}}^{\;\alpha_{2}}\ldots
g_{i_{m}}^{\;\alpha_{m}}$ (with $\alpha_i \in \{-1,1\}$) in
reduced form, then associated with $s_0$ is the following path:
\begin{displaymath}
s_0 \stackrel{\sigma_{i_1}^{\;\alpha_1}}{\longrightarrow} s_1
\stackrel{\sigma_{i_2}^{\;\alpha_2}}{\longrightarrow} s_2
\stackrel{\sigma_{i_3}^{\;\alpha_3}}{\longrightarrow} \ldots
\stackrel{\sigma_{i_m}^{\;\alpha_m}}{\longrightarrow} s_m
\end{displaymath}
with $s_1,\ldots,s_m \in \{1,\ldots,n\}$, and $s_b =
\sigma_{i_b}^{\;\alpha_b}(s_{b-1})$ ($b=1,\ldots,m$). (Recall that
we compose permutations from left to right.)

Likewise, we can speak of the trail through some power of $w$. For
example, the trail of $s_0$ through
$w^3(\sigma_1,\ldots,\sigma_k)$ is
\begin{displaymath}
s_0 \stackrel{\sigma_{i_1}^{\;\alpha_1}}{\longrightarrow} s_1
\stackrel{\sigma_{i_2}^{\;\alpha_2}}{\longrightarrow} \ldots
\stackrel{\sigma_{i_m}^{\;\alpha_m}}{\longrightarrow} s_m
\stackrel{\sigma_{i_1}^{\;\alpha_1}}{\longrightarrow} s_{m+1}
\stackrel{\sigma_{i_2}^{\;\alpha_2}}{\longrightarrow} \ldots
\stackrel{\sigma_{i_m}^{\;\alpha_m}}{\longrightarrow} s_{2m}
\stackrel{\sigma_{i_1}^{\;\alpha_1}}{\longrightarrow} s_{2m+1}
\stackrel{\sigma_{i_2}^{\;\alpha_2}}{\longrightarrow} \ldots
\stackrel{\sigma_{i_m}^{\;\alpha_m}}{\longrightarrow} s_{3m}
\end{displaymath}
with $s_1,\ldots,s_m$ as before, and $s_{m+1},\ldots,s_{3m} \in
\{1,\ldots,n\}$ satisfying the obvious constraints.

We recall that two trails were placed in the same category
if they have the same coincidence pattern.
This notion can be extended to our present, more
general context, in an obvious way. Namely,
every category is associated with some directed edge-colored
graph. Moreover, we can define categories not only of single
trails, but also of \emph{lists} of trails, and again, associate a
graph to each category. The nature of this graph is exactly as
described in Section \ref{sbs:exp-fix-points}.

To illustrate, let $w=g_1g_2g_1^{\;-1}g_2^{\;-1}$ be the
commutator word, $n \geq 8$ an integer and $\sigma_1,\sigma_2 \in
S_n$ such that the following trails are realized by
$w(\sigma_1,\sigma_2)$ and $w^2(\sigma_1,\sigma_2)$, respectively:
\begin{displaymath}
\begin{array}{c}
1 \stackrel{\sigma_1}{\longrightarrow} 3
\stackrel{\sigma_2}{\longrightarrow} 7
\stackrel{\sigma_1^{\;-1}}{\longrightarrow} 3
\stackrel{\sigma_2^{\;-1}}{\longrightarrow} 8 \\
\qquad 4 \stackrel{\sigma_1}{\longrightarrow} 6
\stackrel{\sigma_2}{\longrightarrow} 8
\stackrel{\sigma_1^{\;-1}}{\longrightarrow} 5
\stackrel{\sigma_2^{\;-1}}{\longrightarrow} 5
\stackrel{\sigma_1}{\longrightarrow} 8
\stackrel{\sigma_2}{\longrightarrow} 3
\stackrel{\sigma_1^{\;-1}}{\longrightarrow} 1
\stackrel{\sigma_2^{\;-1}}{\longrightarrow} 2
\end{array}
\end{displaymath}
We denote the nodes of the trail through $w$ by
$s^1_0,\ldots,s^1_4$ and the nodes through $w^2$ by
$s^2_0,\ldots,s^2_8$. Then the graph associated with
the category of this list of trails through $w,w^2$
is shown in Figure \ref{fig:first-graph}. \\

\begin{figure}[htb]
\centering
\includegraphics[width=0.8\textwidth]{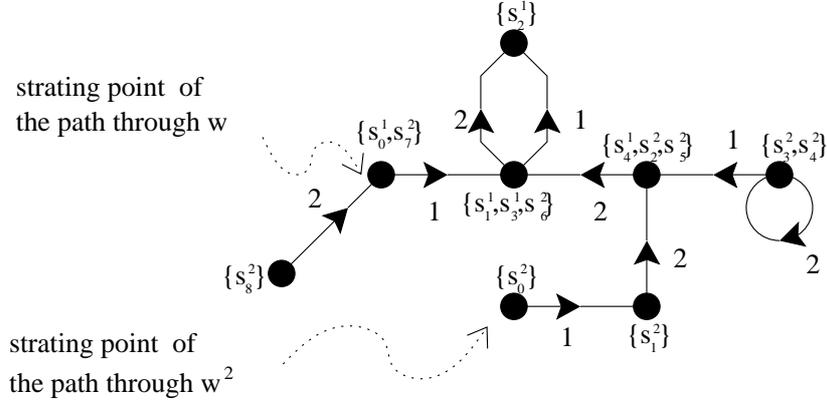}
\caption{An example of a graph associated with a certain category of
a list of trails through $w,w^2$, where
$w=g_1g_2g_1^{\;-1}g_2^{\;-1}$ is the commutator word. }
\label{fig:first-graph}
\end{figure}

Although the notions here have a wider scope, we limit our
discussion to categories of trails which represent \emph{cycles}
in $w(\sigma_1,\ldots,\sigma_k)$: an $L$-cycle is represented by a
closed trail through $w^L(\sigma_1,\ldots,\sigma_k)$. In fact, we
are interested in counting cycles of a given length, so for our
purposes we can confine ourselves to categories of a list of $r$
trails through $w^L(\sigma_1,\ldots,\sigma_k)$, for some $L,r \geq
1$.

First, let us analyze the categories that represent a single
$L$-cycle. A closed trail through $w^L(\sigma_1,\ldots,\sigma_k)$
represents an $L$-cycle only if it does not represent any smaller
cycle. That is, in the closed trail
\begin{displaymath}
s_0 \stackrel{\sigma_{i_1}^{\;\alpha_1}}{\longrightarrow} s_1
\stackrel{\sigma_{i_2}^{\;\alpha_2}}{\longrightarrow} \ldots
\stackrel{\sigma_{i_m}^{\;\alpha_m}}{\longrightarrow} s_m
\stackrel{\sigma_{i_1}^{\;\alpha_1}}{\longrightarrow} \ldots
\stackrel{\sigma_{i_m}^{\;\alpha_m}}{\longrightarrow} s_{2m}
\stackrel{\sigma_{i_1}^{\;\alpha_1}}{\longrightarrow} \ldots
\ldots
\stackrel{\sigma_{i_{m-1}}^{\;\alpha_{m-1}}}{\longrightarrow}
s_{Lm-1} \stackrel{\sigma_{i_m}^{\;\alpha_m}}{\longrightarrow}
s_0,
\end{displaymath}
the $L$ labels $s_0,s_m,s_{2m},\ldots,s_{(L-1)m}$ must all be
distinct.

Once again, the graph of each category is a quotient of the
\emph{universal graph}. In this case, the universal graph is
$C_{L\cdot m}$, the cycle of length $L \cdot m$.
A partition of its vertices corresponds to some category of an
$L$-cycle if and only if it does not create ``collisions'' of
same-color edges, and keeps apart all $L$ vertices corresponding
to $s_0, s_m, s_{2m}, \ldots, s_{(L-1)m}$. We demonstrate this in
Figure \ref{fig:L-univ-graph}.
\\

\begin{figure}[htb]
\centering
\includegraphics[width=0.6\textwidth]{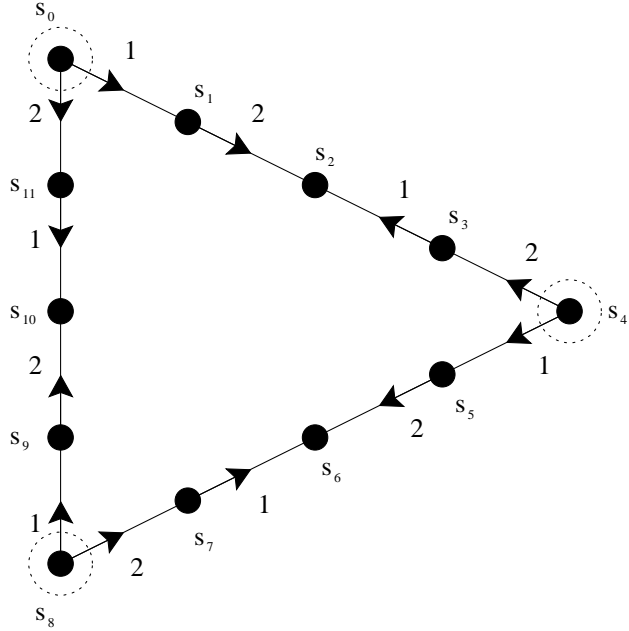}
\caption{The universal graph of a 3-cycle when
$w=g_1g_2g_1^{\;-1}g_2^{\;-1}$. In a realizable partition of the
vertices that represents a 3-cycle, the 3 circled vertices ($s_0,s_4,s_8$) must
belong to distinct blocks.} \label{fig:L-univ-graph}
\end{figure}

The most general case in our discussion comes up when we turn to
calculate the $r$-th moment of $X_{w,L}^{(n)}$. To this end we
consider categories of lists of $r$ $L$-cycles through
$w(\sigma_1, \ldots, \sigma_k)$. The universal graph
$\widetilde{\Gamma}_{w,L,r}$ which represents $r$ ordered cycles
of length $L$ each, consists of a disjoint union of $r$ copies of
$C_{L\cdot m}$. We name the vertices of the first cycle in $\widetilde{\Gamma}_{w,L,r}$ by
$s^1_0,\ldots,s^1_{Lm-1}$, the vertices of the second cycle by $s^2_0,\ldots,s^2_{Lm-1}$
and so on until $s^r_0,\ldots,s^r_{Lm-1}$ for the $r$-th cycle.

We are interested in quotients (or partitions of the vertices) of
$\widetilde{\Gamma}_{w,L,r}$ that represent $r$ distinct
$L$-cycles. This means that the vertices
$$s^1_0, s^1_m, s^1_{2m}, \ldots, s^1_{(L-1)m}, s^2_0, s^2_m\ldots, s^2_{(L-1)m}, \ldots, s^r_0, s^r_m, \ldots, s^r_{(L-1)m}$$
(a total of $L \cdot r$ vertices) should be
kept apart in every partition, as illustrated in Figure
\ref{fig:Lr-univ-graph}.

\begin{figure}[htb]
\centering
\includegraphics[width=0.8\textwidth]{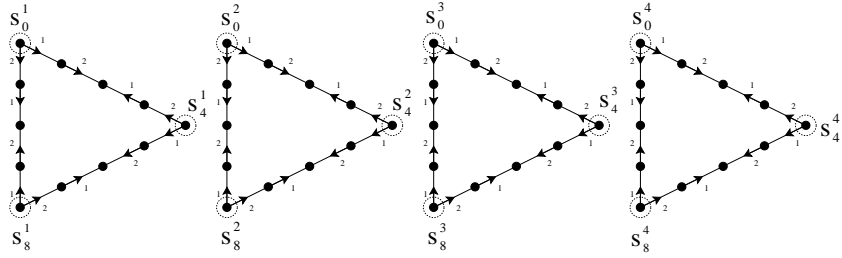}
\caption{The universal graph $\widetilde{\Gamma}_{w,3,4}$
of four 3-cycles where $w=g_1g_2g_1^{\;-1}g_2^{\;-1}$. In
a realizable partition of the vertices that represents four
3-cycles, the 12 circled vertices must belong to distinct blocks.
(We omit the blocks corresponding to the rest of the vertices.)}
\label{fig:Lr-univ-graph}
\end{figure}

\subsection{Formulas for the Moments of $X_{w,L}^{(n)}$}
\label{sbs:formulas}

The method we use to calculate the expectation of
$[X_{w,L}^{(n)}]^r$ is quite
similar to the one we used for $\mathbb{E} \big(
X_{w,1}^{(n)} \big)$. We first calculate the ``factorial moments''
of $X_{w,L}^{(n)}$ and derive the regular moments from them (the
$r$-th factorial moment of a random variable $X$ is defined as
$\mathbb{E} \big( [X]_r \big)$, where $[X]_r$ is the ``falling
factorial'', namely $[X]_r = X(X-1) \ldots (X-r+1)$~).

$\left[X_{w,L}^{(n)}\right]_r$ counts lists of $r$ $L$-cycles in
$w(\sigma_1, \ldots, \sigma_k)$. As in the case of the first
moment, we calculate its expectation by counting the total number
of lists of $r$ $L$-cycles in $w(\sigma_1, \ldots, \sigma_k)$ for
\emph{all} $k$-tuples $(\sigma_1, \ldots, \sigma_k) \in S_n^{\;k}$
and dividing by $(n!)^k$.

The counting is carried out by classifying these lists into
categories. Each list of $r$ $L$-cycles \emph{with specified
starting point for each cycle}, belongs to some category. These
categories are the quotients of the universal graph
$\widetilde{\Gamma}_{w,L,r}$, which we denote by ${\cal
Q}_{w,L,r}$ (e.g., ${\cal Q}_{w,1,1}$ is the same set as ${\cal
Q}_w$). To recap, the set ${\cal Q}_{w,L,r}$ can be generated as follows:

We first draw $\widetilde{\Gamma}_{w,L,r}$, the
universal graph of $r$ ordered $L$-cycles of $w$, which consists
of $r$ disjoint cycles each of which has $L \cdot |w|$ vertices.
${\cal Q}_{w,L,r}$ consists of quotient graphs
that are generated by partitions of the vertices of
$\widetilde{\Gamma}_{w,L,r}$. A quotient graph is included in
${\cal Q}_{w,L,r}$ if it is realizable, and if in the
corresponding partition each of the $r \cdot L$ vertices that
represent the $r$ $L$-cycles is in a different block. \\

Let $\Gamma$ be some graph in ${\cal Q}_{w,L,r}$. A
\emph{realization} of $\Gamma$ is a $k$-tuple of permutations
$\sigma_1,\ldots,\sigma_k \in S_n$, a list of $r$ $L$-cycles of
$w(\sigma_1,\ldots,\sigma_k)$ and a specified starting point for
each cycle, such that they belong to $\Gamma$'s category. The
number of realizations of $\Gamma$ is the same as in
\eqref{eq:n-realizations}, namely:
\begin{equation} \label{eq:n-realizations-2}
N_\Gamma(n) =
n(n-1)\ldots(n-v_\Gamma+1)\prod_{j=1}^{k}\big(n-e^j_\Gamma\big)!
\end{equation}

Since every list of $r$ $L$-cycles is counted $L^r$ times (there
are $L^r$ ways to choose the starting points), we have:

\begin{eqnarray} \label{eq:moment-formula1}
\mathbb{E}\big([X_{w,L}^{(n)}]_r\big) &=& \frac{1}{(n!)^k}
\sum_{(\sigma_1,\ldots,\sigma_k)\in S_n^{\;k}}
[X_{w,L}^{(n)}(\sigma_1,\ldots,\sigma_k)\big]_r = \nonumber \\
&=& \frac{1}{(n!)^k} \cdot \frac{1}{L^r} \sum_{\Gamma \in {\cal
Q}_{w,L,r}}N_\Gamma(n) = \\  \label{eq:moment-formula2} &=&
\frac{1}{L^r} \sum_{\Gamma \in {\cal Q}_{w,L,r}}
\frac{n(n-1)\ldots(n-v_\Gamma+1)}{\prod_{j=1}^k n(n-1)\ldots
(n-e^j_\Gamma+1)} = \\
&=& \frac{1}{L^r} \sum_{\Gamma \in {\cal Q}_{w,L,r}}
[n]_{v_\Gamma} \prod_{j=1}^k \frac{1}{[n]_{e^j_\Gamma}}  \nonumber
\end{eqnarray}

Note that the equality between \eqref{eq:moment-formula1} and
\eqref{eq:moment-formula2} holds only for $n$ large enough.
Indeed $N_\Gamma(n)=0$ if $n<e^j_\Gamma$
for some $\Gamma \in {\cal Q}_{w,L,r}$ and $j \in
\{1,\ldots,k\}$. This holds for $n \geq
max_{j=1,\ldots,k}\big(e_j(\widetilde{\Gamma}_{w,L,r})\big)$.

For every $L \geq 1$ and $r \geq 1$, \eqref{eq:moment-formula2}
thus yields a rational function in $n$, which, for sufficiently large $n$,
is the $r$-th factorial moment of $X_{w,L}^{(n)}$. It is
convenient to rewrite \eqref{eq:moment-formula2} as a function of
$\frac{1}{n}$:

\begin{equation} \label{eq:pre-psi}
\mathbb{E}\big([X_{w,L}^{(n)}]_r\big) = \frac{1}{L^r}\sum_{\Gamma
\in {\cal Q}_{w,L,r}} \left(\frac{1}{n}\right)^{e_\Gamma-v_\Gamma}
\frac{\prod_{t=1}^{v_\Gamma-1}(1-\frac{t}{n})}
{\prod_{j=1}^k\prod_{t=1}^{e^j_\Gamma-1}(1-\frac{t}{n})}
\end{equation}

We can now define a rational function $\psi_{w,L,r}$ by:
\begin{equation} \label{eq:psi}
\psi_{w,L,r}(x) = \frac{1}{L^r} \sum_{\Gamma\in{\cal Q}_{w,L,r}}
x^{\chi(\Gamma) - 1} \frac{\prod_{t=1}^{v_\Gamma-1}(1-tx)}
{\prod_{j=1}^k\prod_{t=1}^{e^j_\Gamma-1}(1-tx)}
\end{equation}
and so $\mathbb{E}\big([X_{w,L}^{(n)}]_r\big) =
\psi_{w,L,r}(\frac{1}{n})$ for $n \geq max_{j=1,\ldots,k}
\big(e_j(\widetilde{\Gamma}_{w,L,r})\big)$.

The following lemma shows that $\psi_{w,L,r}$ is well defined on a
neighborhood of $0$.

\begin{lem} \label{lem:edges}
For each $\Gamma \in {\cal Q}_{w,L,r}$, $\chi(\Gamma) \geq 1$.
\end{lem}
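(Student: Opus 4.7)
The plan is to show $e_\Gamma \geq v_\Gamma$, which is equivalent to $\chi(\Gamma) = e_\Gamma - v_\Gamma + 1 \geq 1$. To this end, I will argue that every connected component of $\Gamma$ contains at least one cycle (in the graph-theoretic sense). If this holds and $c$ denotes the number of connected components, then the cyclomatic formula gives $e_\Gamma - v_\Gamma + c \geq c$, so $e_\Gamma \geq v_\Gamma$, as required.

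The key point is the structure of the universal graph: $\widetilde{\Gamma}_{w,L,r}$ is a disjoint union of $r$ directed edge-colored cycles, each of length $L|w| \geq 1$. The quotient map $\pi \colon \widetilde{\Gamma}_{w,L,r} \twoheadrightarrow \Gamma$ associated with the partition defining $\Gamma$ is a graph homomorphism (it sends each edge to an edge with the same label and orientation), so it carries each of the $r$ cycles in $\widetilde{\Gamma}_{w,L,r}$ to a closed directed walk of positive length in $\Gamma$.

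Next, I would invoke the elementary observation that if a vertex $v$ of a directed graph lies on a closed directed walk of positive length, then $v$ lies on a directed cycle. (Indeed, a shortest closed directed walk through $v$ can have no interior repetition, for otherwise one could shorten it, so it is itself a simple directed cycle through $v$.) Since every vertex of $\Gamma$ is the image under $\pi$ of some vertex of $\widetilde{\Gamma}_{w,L,r}$, and every such vertex lies on one of the $r$ cycles, it lies on the corresponding closed walk in $\Gamma$, hence on a directed cycle. In particular, every connected component of the underlying undirected graph of $\Gamma$ contains a cycle, which is what was needed.

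I do not anticipate any real obstacle here: the argument uses only the definition of ${\cal Q}_{w,L,r}$ as the set of realizable quotients of $\widetilde{\Gamma}_{w,L,r}$ together with a simple graph-theoretic fact. The only point requiring a moment's care is that the projection of a closed walk in $\widetilde{\Gamma}_{w,L,r}$ is a closed walk in $\Gamma$, but this is immediate from the fact that $\pi$ sends adjacent vertices to adjacent vertices.
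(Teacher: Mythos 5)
Your argument is correct, but it takes a slightly different (and perhaps more robust) route than the paper's. The paper's proof observes that in $\widetilde{\Gamma}_{w,L,r}$ every vertex has degree exactly $2$ (one in-edge, one out-edge, since $w$ is cyclically reduced and $w\neq 1$), that this minimum degree is preserved under passing to a realizable quotient, and then concludes $e_\Gamma \geq v_\Gamma$ from the handshake count. You instead push forward the $r$ closed directed walks of the universal graph under the quotient map, observe that every vertex of $\Gamma$ lies on the image of one of these positive-length closed walks and hence on a directed cycle, and then conclude $e_\Gamma \geq v_\Gamma$ componentwise. The two arguments are closely related: a vertex lying on a directed cycle automatically has both an in-edge and an out-edge, so your observation implies theirs; conversely, in a finite graph ``minimum degree $\geq 2$'' already forces every component to contain a cycle, which is all the paper actually uses. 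What your formulation buys is that it sidesteps any need to justify the slightly informal claim that ``degree cannot decrease in a quotient'' (true here, but it does require a small check involving edge identifications in realizable quotients), replacing it with the cleaner statement that a graph homomorphism sends closed walks to closed walks. One small phrasing nit: the inequality you want is simply that each component has a cycle, hence $e_i\geq v_i$ in each component, hence $e_\Gamma\geq v_\Gamma$; invoking ``the cyclomatic formula gives $e_\Gamma - v_\Gamma + c \geq c$'' is the same content but reads as if it were a direct application of the formula rather than a consequence of the per-component cycle claim.
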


\begin{proof}
Note that in $\widetilde{\Gamma}_{w,L,r}$ every vertex has degree
2, since $w$ is cyclically reduced.
This degree can not decrease in a quotient. Therefore, every
vertex in every $\Gamma \in {\cal Q}_{w,L,r}$ has degree at least
2, and the lemma follows.
\end{proof}

It is well-known how to express the regular moments as linear
combinations of the factorial moments. Thus we can derive (an
efficiently computable) rational function $\Psi_{w,L,r}$ (which is
a linear combination of $\psi_{w,L,1},\ldots,\psi_{w,L,r}$), such
that $\mathbb{E}([X_{w,L}^{(n)}]^r)=\Psi_{w,L,r}(1/n)$ for
sufficiently large $n$. This function $\Psi_{w,L,r}$ is obviously
defined on a neighborhood of $0$, which proves the first part
of Corollary \ref{cor:moments}.

\subsection{Proving Theorem \ref{ther:limit-xwln} with the Method of
Moments}

The proof of Theorem \ref{ther:limit-xwln} is based on the method
of moments. Under certain mild conditions, a probability
distribution is determined by its moments, or as here, a limit
distribution is determined by the limits of the moments.

\subsubsection{Some Facts from the Method of Moments}

A probability measure $\mu$ on $\mathbb{R}$ is said to be
\emph{determined by its moments} if it has finite moments
$\alpha_r = \int_{-\infty}^\infty x^r\mu(dx)$ of all orders, and
$\mu$ is the only probability measure with these moments.
We quote Theorem 30.2 from \cite{bil95}:

\begin{ther} \label{metod-of-moments}
Let $X$ and $X_n ~(n \in \mathbb{N})$ be random variables, and
suppose that the distribution of $X$ is determined by its moments,
that the $X_n$ have moments of all order, and that ~$\lim_{n \to
\infty}\mathbb{E}(X_n^{\;r}) = \mathbb{E}(X^r)$ for
every $r \in \mathbb{N}$. \\
Then \[X_n \stackrel{dis}{\to} X.\]
\end{ther}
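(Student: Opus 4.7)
The plan is to prove Theorem \ref{metod-of-moments} by the classical subsequence technique: establish tightness of $(X_n)$, extract weak limits of subsequences, identify every such limit with $X$ via moment matching, and conclude via the standard ``every subsequence has a sub-subsequence converging to the same limit'' principle.

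First I would establish tightness. Since $\mathbb{E}(X_n^{\,2}) \to \mathbb{E}(X^2) < \infty$, the second moments are uniformly bounded, say by $C$. Markov's inequality gives $\mathbb{P}(|X_n| > M) \le C/M^2$ for every $n$, so $(X_n)$ is tight. By Prokhorov's theorem (or Helly's selection theorem applied to the distribution functions), every subsequence $(X_{n_j})$ admits a further subsequence $(X_{n_{j_k}})$ converging in distribution to some random variable $Y$.

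The heart of the argument is to identify $Y$ with $X$ in distribution. For this I would show that, along the convergent subsequence, $\mathbb{E}(Y^{r}) = \mathbb{E}(X^r)$ for every $r \in \mathbb{N}$. The key input is uniform integrability of $\{|X_{n_{j_k}}|^{r}\}_k$: because all moments of the $X_n$ converge, each even moment $\mathbb{E}(X_n^{2s})$ is uniformly bounded in $n$, and by the power-mean inequality $\mathbb{E}(|X_n|^r) \le \mathbb{E}(X_n^{2s})^{r/(2s)}$ is uniformly bounded for any $s \ge r$. Choosing $s$ with $2s > r$ we get $\sup_n \mathbb{E}(|X_n|^{r+1}) < \infty$, and then
\[
\sup_n \mathbb{E}\bigl(|X_n|^{r}\,\mathbf{1}_{\{|X_n|>M\}}\bigr) \le \frac{\sup_n \mathbb{E}(|X_n|^{r+1})}{M} \xrightarrow[M\to\infty]{} 0,
\]
which is precisely uniform integrability of $\{|X_{n_{j_k}}|^r\}$. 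Combined with distributional convergence $X_{n_{j_k}} \stackrel{dis}{\to} Y$ and the continuous mapping theorem applied to $x \mapsto x^r$, this yields $\mathbb{E}(Y^r) = \lim_k \mathbb{E}(X_{n_{j_k}}^r) = \mathbb{E}(X^r)$. Since by hypothesis the distribution of $X$ is determined by its moments, we conclude $Y \stackrel{dis}{=} X$.

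At this point every subsequence of $(X_n)$ has a further subsequence converging in distribution to $X$. A standard argument (applied at each continuity point of the limiting distribution function) then promotes this to $X_n \stackrel{dis}{\to} X$, completing the proof. The main obstacle is the uniform integrability step: one must verify it from the bare hypothesis ``all moments converge'', which I would handle by the power-mean/Lyapunov bound above to control $\mathbb{E}(|X_n|^{r+1})$ in terms of bounded even moments; all other steps are routine appeals to Prokhorov and to moment determinacy.
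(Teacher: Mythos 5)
Your argument is correct, and it is the standard proof of this classical fact: tightness from bounded second moments, Prokhorov/Helly to extract a weakly convergent subsequence, uniform integrability of the powers (via Lyapunov's inequality and a higher even moment) to upgrade weak convergence to convergence of moments along the subsequence, moment-determinacy to identify the limit with $X$, and the sub-subsequence principle to conclude. Note, however, that the paper does not prove this statement at all --- it is quoted verbatim as Theorem 30.2 from Billingsley's \emph{Probability and Measure} \cite{bil95}, and Billingsley's proof is essentially the one you reconstructed. So there is nothing in the paper to compare against; your proof would serve as a self-contained substitute for the citation.
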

Note that if $X$ and the $X_n$ are integer-valued then $X_n
\stackrel{dis}{\to} X$ is equivalent to $Pr(X_n=k) \to Pr(X=k)$
for every integer $k$.

The relation between regular moments and factorial moments
implies:

\begin{cor} \label{cor:method-of-factorial-moments}
The statement of Theorem~\ref{metod-of-moments} holds where
moments are replaced by factorial moments.
\end{cor}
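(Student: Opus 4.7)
The plan is to reduce Corollary \ref{cor:method-of-factorial-moments} directly to Theorem \ref{metod-of-moments} by passing between factorial and ordinary moments via a standard triangular change of basis. The key fact is that the falling factorial $[X]_r = X(X-1)\cdots(X-r+1)$ is a polynomial of degree exactly $r$ in $X$, and conversely $X^r$ can be written as a polynomial of degree $r$ in $[X]_1, \ldots, [X]_r$ via the Stirling numbers of the second kind:
\[
X^r \;=\; \sum_{j=0}^{r} S(r,j)\,[X]_j .
\]
Thus the two sequences of moments are obtained from one another by an invertible (upper-triangular, integer-valued) linear transformation of finite truncations.

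Given this, the argument is a one-line application of linearity of expectation. For each fixed $r$ and each $n$,
\[
\mathbb{E}\bigl(X_n^{\,r}\bigr) \;=\; \sum_{j=0}^{r} S(r,j)\,\mathbb{E}\bigl([X_n]_j\bigr),
\qquad
\mathbb{E}\bigl(X^r\bigr) \;=\; \sum_{j=0}^{r} S(r,j)\,\mathbb{E}\bigl([X]_j\bigr),
\]
so finiteness of all factorial moments of $X_n$ is equivalent to finiteness of all ordinary moments of $X_n$, and likewise for $X$. Under the hypothesis $\mathbb{E}([X_n]_j) \to \mathbb{E}([X]_j)$ for every $j$, taking a finite linear combination immediately yields $\mathbb{E}(X_n^{\,r}) \to \mathbb{E}(X^r)$ for every $r$.

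The hypothesis that $X$ is determined by its (factorial) moments is equivalent to being determined by its ordinary moments, again because the two collections determine one another. Hence all the hypotheses of Theorem \ref{metod-of-moments} hold for the $X_n$ and $X$ in the sense of ordinary moments, and the theorem yields $X_n \stackrel{dis}{\to} X$.

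There is no real obstacle: the corollary is a purely formal restatement of the method of moments, and no probabilistic work beyond invoking Theorem \ref{metod-of-moments} is needed. The only thing to keep in mind is that the Stirling-number identity is valid for \emph{every} real number $X$, hence trivially holds pointwise as an identity of random variables, so that the passage to expectations by linearity is legitimate.
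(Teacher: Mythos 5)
Your proof is correct and takes the same route the paper implicitly takes: the paper gives no explicit proof, merely prefacing the corollary with ``The relation between regular moments and factorial moments implies,'' and your argument via the Stirling-number change of basis $X^r = \sum_{j=0}^r S(r,j)[X]_j$ is exactly the standard way to make that one-line assertion precise.
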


In this section we use Corollary
\ref{cor:method-of-factorial-moments} to prove Theorem
\ref{ther:limit-xwln}. That $X_{w,L}^{(n)}$ has moments of all
order is evident. We still need to show that the r.h.s of
\eqref{eq:conv} is determined by its moments, and that the $r$-th
factorial moment of $X_{w,L}^{(n)}$ indeed converges to the $r$-th
factorial moment of
this random variable. \\

Theorem 30.1 from \cite{bil95} provides a sufficient condition for
a probability measure $\mu$ to be determined by its moments,
namely, that the power series $\sum_r \alpha_r t^r / r!$ where
$\alpha_r$ is the $r$-th moment of $\mu$ has a positive radius of
convergence. (This series is the moment generating function of
$\mu$, when the latter exists.) For $\mu$ a Poisson distribution
(with any parameter), this power series converges for all real $t$
(e.g., \cite{pit}, section 4), so $\mu$ is determined by its
moments. A convolution (a summation) of several Poisson
distributions is itself Poisson (whose parameter is the sum of
parameters), and thus satisfies the condition as well.

In particular, recall that the r.h.s. of \eqref{eq:conv} is
$\sum_{h\in H(d,L)}hZ_{1/Lh}$, where $Z_{m} \sim Poi(m)$. If we
omit the constant $h$ of every term in this sum, we obtain
$\sum_{h\in H(d,L)}Z_{1/Lh}$, whose distribution is simply
$Poi\big( \sum_{h\in H(d,L)}1/Lh \big)$, which is determined by
its moments. According to the definition of $H(d,L)$ in
\eqref{eq:hdl}, each $h \in H(d,L)$ satisfies $1 \leq h \leq d$.
Thus, if we denote by $\alpha_r$ the $r$-th moment of $\sum_{h\in
H(d,L)}Z_{1/Lh} $ and by $\beta_r$ the $r$-th moment of
$\sum_{h\in H(d,L)}hZ_{1/Lh}$, then $\alpha_r \leq \beta_r \leq
d^r\alpha_r$. Consequently, the series $\sum_r \beta_r t^r / r!$
has radius of convergence that is $\ge \frac{1}{d}$
that of the series $\sum_r \alpha_r t^r / r!$.
But the latter converges for all real $t$,
hence so does $\sum_r \beta_r t^r
/ r!$, and the distribution of $\sum_{h\in
H(d,L)}hZ_{1/Lh}$ is determined by its moments.
\\
\\

To conclude the proof of Theorem \ref{ther:limit-xwln}, we need to
show that the (factorial) moments of $X_{w,L}^{(n)}$ indeed
converge to the respective moments of the r.h.s. of
\eqref{eq:conv}. But what is the limit of
$\mathbb{E}\big([X_{w,L}^{(n)}]_r\big)$? By Lemma \ref{lem:edges},
the limit of each term in the r.h.s. of \eqref{eq:pre-psi} is
either $0$ (if $\chi(\Gamma)\geq 2)$, or $1$ (if $\chi(\Gamma)=1$).
Therefore,

\begin{equation} \label{eq:lim=n_chi=1}
\lim_{n \to \infty} \mathbb{E}\big([X_{w,L}^{(n)}]_r\big) =
\frac{1}{L^r} \big|\{\Gamma \in {\cal Q}_{w,L,r}~:~ \chi(\Gamma)=1
\} \big|
\end{equation}

As explained in the proof of Lemma \ref{lem:edges}, the equality
$\chi(\Gamma)=1$ holds for some $\Gamma \in {\cal Q}_{w,L,r}$, iff
every vertex in $\Gamma$ has degree 2, i.e., iff $\Gamma$ is a
disjoint union of cycles.

We denote by ${\cal C}_{w,L,r}$ the subset of ${\cal Q}_{w,L,r}$
consisting of all graphs which are a disjoint union of cycles.
\eqref{eq:lim=n_chi=1} now becomes:

\begin{equation} \label{eq:lim=c}
\lim_{n \to \infty} \mathbb{E}\big([X_{w,L}^{(n)}]_r\big) =
\frac{1}{L^r}\big| {\cal C}_{w,L,r} \big|
\end{equation}

\subsubsection{The Graphs in ${\cal C}_{w,L,r}$} \label{sbsbs:cwlr}

Let $w \in \mathbf{F}_k$ be cyclically reduced and equal $u^d$
with $u$ primitive and $d \geq 1$. A graph $\Gamma \in {\cal
C}_{w,L,r}$ has a very specific structure: Each cycle $c$ in
$\Gamma$ must be a cyclic concatenation of several copies of $u$
(every cycle in $\Gamma$ looks like
$\widetilde{\Gamma}_{u,b,1}$ for some positive integer $b$).

To see this, recall that each cycle $c$ in $\Gamma$ represents a
closed trail through $w^L$ (at least one closed trail). Hence
there is some vertex $x$ in $c$ and some orientation on $c$, such
that if we leave $x$ in this orientation and go exactly $d \cdot
L$ times through $u$, we get back to $x$ (possibly after tracing
$c$ several times). Since $u$ is primitive, it cannot be invariant
under cyclic shift of any length $l < |u|$, and the size of $c$
must divide $|u|$. (In fact, the integer $|c|/|u|$ divides $d
\cdot L$).

Moreover, all closed trails that are represented in $c$, go in the
same direction (and thus also start in one of the $|c|/|u|$ head
vertices of $u$). We already saw in the proof of Lemma
\ref{lem:phi approx beta=2} that
for $u$ primitive, $u^{-1}$ is not a subword of $u^2$.
This rules out the possibility of ``finding $u$ in the opposite
direction''.

This analysis of the structure of the graphs in ${\cal C}_{w,L,r}$
yields an important corollary, which ultimately explains why the
limit distribution of $X_{w.L}^{(n)}$ depends solely on $d$ and
$L$, and not on $u$:

\begin{cor} \label{cor:u-not-matter}
Let $w_1,w_2 \in \mathbf{F}_k$ be cyclically reduced and equal
$u_1^{\;d}$ and $u_2^{\;d}$, respectively, with $u_1$ and $u_2$
primitive and $d \geq 1$. Then
\[ \big| {\cal C}_{w_1,L,r} \big| =
\big| {\cal C}_{w_2,L,r} \big| \]
\end{cor}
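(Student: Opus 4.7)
The plan is to exhibit an explicit bijection between $\mathcal{C}_{w_1,L,r}$ and $\mathcal{C}_{w_2,L,r}$ by replacing every occurrence of $u_1$ in a graph with an occurrence of $u_2$. The preceding structural analysis of $\mathcal{C}_{w,L,r}$ already does most of the work: every cycle $c$ in a graph $\Gamma\in\mathcal{C}_{w,L,r}$ is a cyclic concatenation of $b_c$ copies of $u$ traced in a single direction, where $b_c\mid dL$, and each of the $r$ abstract cycles of $\widetilde{\Gamma}_{w,L,r}$ is wrapped some number of times around one of these geometric cycles, starting at one of the $b_c$ ``$u$-head'' positions.

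First I would formalize this by showing that $\Gamma$ is determined, up to isomorphism of edge-colored graphs, by the following combinatorial data, none of which references the actual word $u$: (i) a surjection $\pi:\{1,\ldots,r\}\to\{c_1,\ldots,c_s\}$ assigning to each abstract cycle the geometric cycle it covers; (ii) integers $b_1,\ldots,b_s$ with $b_j\mid dL$; (iii) for each $i\in\{1,\ldots,r\}$ an alignment offset $a_i\in\mathbb{Z}/b_{\pi(i)}\mathbb{Z}$ giving the $u$-head position that is the image of $s^i_0$. The bijection is then the obvious one: send the graph associated with data $(\pi,\{b_j\},\{a_i\})$ built over $u_1$ to the graph with the same data built over $u_2$. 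This map is clearly invertible by swapping the roles of $u_1$ and $u_2$.

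Next I would verify that the two side-conditions defining membership in $\mathcal{C}_{w,L,r}$ are also expressible purely in terms of the above data. Realizability (no colour collisions) holds automatically because the labels we are rebuilding from are $u_2$-segments rather than $u_1$-segments, and segments of the same word never collide among themselves. The distinctness condition, that the $rL$ vertices $\{s^i_{jm}\}$ lie in $rL$ different blocks, reduces to: within one geometric cycle, the offsets $(a_i + jd)\bmod b_{\pi(i)}$ for $0\le j<L$ (and varying $i$ with the same image $\pi(i)$) are pairwise distinct. This is a condition on $d$, $L$ and the combinatorial data alone.

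The main obstacle is justifying that the combinatorial data really does classify $\Gamma$ in a way independent of $u$, i.e.\ that primitivity of $u$ prevents any ``extra'' identifications beyond those encoded by $(\pi,\{b_j\},\{a_i\})$. This needs two inputs already established: $u$ cannot coincide with any nontrivial cyclic shift of itself, and $u^{-1}$ is not a subword of $u^2$ (used in the proof of Lemma~\ref{lem:phi approx beta=2}); together these rule out forced coincidences arising from either a rotational self-match of $u$ or an orientation-reversing self-match. With this in hand, the replacement $u_1\leftrightarrow u_2$ is a well-defined bijection and the equality of cardinalities follows.
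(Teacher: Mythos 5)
Your proposal is correct and follows essentially the same route as the paper: the paper's (very short) argument is exactly the observation that the structural analysis of Section 4.3.2 expresses every $\Gamma\in\mathcal{C}_{w,L,r}$ as a disjoint union of cycles, each a cyclic concatenation of $b$ copies of $u$ with $b\mid dL$, traced in a consistent direction starting at $u$-heads, and then applies the replacement bijection $u_1\leftrightarrow u_2$. Your extra layer of bookkeeping $(\pi,\{b_j\},\{a_i\})$ just makes explicit the data the paper calls ``the inner structure of $u$ is completely irrelevant''; one small point worth stating when formalizing is that the offsets $a_i$ are only defined modulo a rotation of the geometric cycle, but since this ambiguity is identical over $u_1$ and over $u_2$, it does not affect the bijection.
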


\begin{proof}
The analysis above shows that the inner structure of $u$ is
completely irrelevant to the graphs in ${\cal C}_{w,L,r}$, and
there is a natural bijection between $C_{w_1,L,r}$ and
$C_{w_2,L,r}$: simply replace each copy of $u_1$ by a copy of
$u_2$.
\end{proof}

\subsubsection{The Simple Case of a Primitive Word}

The material in this section is not necessary for the proof and
deals with the special case of primitive words. Our hope is that
this spacial case makes it easier to follow the general proof. So
let us assume that $u=w$ and $d=1$. The universal graph
$\widetilde{\Gamma}_{w,L,r}$ is a disjoint union of $r$ cycles,
each of which consists of $L$ copies of $u$. We have a total of $r
\cdot L$ copies of $u$, and their $r \cdot L$ initial vertices are
kept separated in each quotient. It is easily verified that the
only quotient of $\widetilde{\Gamma}_{w,L,r}$ which is a disjoint
union of cycles is $\widetilde{\Gamma}_{w,L,r}$ itself, and thus
$\big| {\cal C}_{w_1,L,r} \big| = 1$. By \eqref{eq:lim=c} we have
$\lim_{n \to \infty} \mathbb{E}\big([X_{w,L}^{(n)}]_r\big) =
\frac{1}{L^r}$.

On the other hand, when $w$ is primitive, the r.h.s. of
\eqref{eq:conv} is simply $Z_{1/L}$. Let $X$ be an integer-valued
non-negative random variable and let $f_X(t)$ be its generating function \[f_X(t)
= \sum_{j=0}^{\infty} Pr(X=j)t^j .\] Under certain mild conditions
and in particular if $f_X(t)$ is analytic on $\mathbb{R}$,
\[ \mathbb{E}([X]_r) = f_X^{\;(r)}(1) \qquad \forall r\in\mathbb{N}. \]
The generating function of $Z_{1/L}$ is $f(t) =
e^{\frac{-1}L}e^{\frac{t}L}$. Thus,
\[ \mathbb{E}([Z_{1/L}]_r) = f^{(r)}(1) = \frac{1}{L^r} \]
which completes the proof of Theorem \ref{ther:limit-xwln} for $w$
primitive.

\subsubsection{The General Case} \label{end-of-proof}

To complete the proof of Theorem \ref{ther:limit-xwln}, we show
that for every $w=u^d \in \mathbf{F}_k$, $L \geq 1$ and $r \geq
1$, the series $\mathbb{E}\big([X_{w,L}^{(n)}]_r\big)$ converges,
for $n \to \infty$, to $\mathbb{E}\big([Y_{d,L}]_r\big)$, where
$Y_{d,L}$ is the r.h.s. of \eqref{eq:conv}, i.e.
$Y_{d,L}=\sum_{h\in H(d,L)}hZ_{1/Lh}$.

Let $X_1$ and $X_2$ be non-negative integer-valued variables with
generating functions $f_{X_1}(t), f_{X_2}(t)$. If $Y=X_1+X_2$,
then clearly $Y$'s generating function is $f_Y(t)=f_{X_1}(t) \cdot
f_{X_2}(t)$. The generating function of $hZ_m$ ($h \in \mathbb{N},
Z_m \sim Poi(m)$) is $f_{hZ_m}(t) = e^{-m} \cdot e^{mt^h}$.
Therefore,
\[ f_{Y_{d,L}} = \prod_{h \in H(d,L)}
e^{-\frac{1}{Lh}}e^{\frac{t^h}{Lh}} \] and if $H(d,L) =
\{h_1,\ldots,h_p\}$, then
\begin{eqnarray} \label{eq:derivative}
\mathbb{E}\big([Y_{d,L}]_r\big) &=&
f_{Y_{d,L}}^{\;(r)}(t)\big|_{t=1} \nonumber \\ &=&
\sum_{\substack{r_1 + \ldots + r_p = r \\ r_j \geq
0}}\binom{r}{r_1 \ldots r_p} \prod_{j=1}^p f_{h_jZ_{1/Lh_j}}
^{~~~~(r_j)}(t)\big|_{t=1}
\end{eqnarray}

By \eqref{eq:lim=c}, the series
$\mathbb{E}\big([X_{w,L}^{(n)}]_r\big)$ converges to
$\frac{1}{L^r}\big| {\cal C}_{w,L,r} \big|$, so we proceed to
analyze the set ${\cal C}_{w,L,r}$. Let $\Gamma$ be a graph in
${\cal C}_{w,L,r}$. As explained in section \ref{sbsbs:cwlr},
$\Gamma$ is a disjoint union of cycles, each of which consists of
several copies of $u$. If we let $b$ denote the number of copies
of $u$ in some cycle, what are the possible values of $b$? To
begin with, $b|dL$, as this cycle represents a closed path that
consists of $dL$ copies of $u$. Secondly, as this cycle represents
an $L$-cycle, $L|b$ and $b \nmid dL'$ for any $1 \leq L' < L$. If
we let $h=\frac{b}{L}$, then the constraints on $b$ translate into
the following conditions: $h|d$ and
$\left(\frac{d}{h},L\right)=1$, precisely as in the definition of
$H(d,L)$. Note that a cycle in $\Gamma$ with $b=Lh$ copies of $u$
can represent up to $h$ distinct $L$-cycles in
$w(\sigma_1,\ldots,\sigma_k)$.

As before, let $H(d,L)= \{h_1,\ldots,h_p\}$. For $j=1,\ldots,p$
consider those $L$-cycles which are associated in the quotient
$\Gamma$ to a cycle of length $Lh_j \cdot |u|$. (The $i$-th
$L$-cycle belongs to the cycle $c$ in $\Gamma$ if the blocks
containing $s^i_0,\ldots,s^i_{L|w|-1}$ correspond to vertices in
$c$.) Let $r_j$ be the number of such $L$-cycles whence $\sum r_j
= r$, and there are $\binom{r}{r_1 \ldots r_p}$ ways to choose
which $L$-cycles go where (Recall that
$\widetilde{\Gamma}_{w,L,r}$ consists of an \emph{ordered} list of
$r$ $L$-cycles). Now let ${\cal C}_{w,L,r}^h$ denote the subset of
${\cal C}_{w,L,r}$ consisting of all quotient graphs where all
disjoint cycles are of equal length of $Lh|u|$ each. Then we have:
\begin{eqnarray} \label{eq:cwlrh}
\lim_{n \to \infty} \mathbb{E}\big([X_{w,L}^{(n)}]_r\big)
&\stackrel{\eqref{eq:lim=c}}{=}& \frac{1}{L^r}\big| {\cal
C}_{w,L,r} \big| = \frac{1}{L^r} \sum_{\substack{r_1 + \ldots +
r_p = r \\ r_j \geq 0}} \binom{r}{r_1 \ldots r_p} \prod_{j=1}^p
\big| {\cal C}_{w,L,r_j}^{h_j} \big|
\nonumber \\
&=& \sum_{\substack{r_1 + \ldots + r_p = r \\ r_j \geq 0}}
\binom{r}{r_1 \ldots r_p} \prod_{j=1}^p \frac{1}{L^{r_j}}\big|
{\cal C}_{w,L,r_j}^{h_j} \big|
\end{eqnarray}
(${\cal C}_{w,L,0}^{h}$ denotes the singleton of the empty graph, and
therefore $\big| {\cal C}_{w,L,0}^{h} \big| = 1$).

By combining \eqref{eq:derivative} and \eqref{eq:cwlrh}, we
conclude that Theorem \ref{ther:limit-xwln} will follow if we show
\begin{equation} \label{eq:left}
\frac{1}{L^{r}}\big| {\cal C}_{w,L,r}^{h} \big| = f_{hZ_{1/Lh}}
^{~~~(r)}(t)\big|_{t=1},
\end{equation}
for every $L \geq 1, r \geq 0$ and $h \in H(d,L)$. \\ \\

We begin with the l.h.s. of \eqref{eq:left}. Recall that by
definition, each graph $\Gamma \in {\cal C}_{w,L,r}^{h}$ consists
of a disjoint union of cycles of length $Lh|u|$ each. Thus, each
cycle represents up to $h$ distinct $L$-cycles of
$w(\sigma_1,\ldots,\sigma_k)$, and $\Gamma$ can represent up to $h
\cdot (\textrm{\# cycles in }\Gamma)$ distinct $L$-cycles. But
$\Gamma$ represents only $r$ distinct $L$-cycles, whence there are
$h \cdot (\textrm{\# cycles in }\Gamma) - r$ ``free spots'' in
$\Gamma$ that can contain new $L$-cycles. We illustrate these notions
in Figure \ref{fig:c_hwlr_example}.

\begin{figure}[htb]
\centering
\includegraphics[width=1\textwidth]{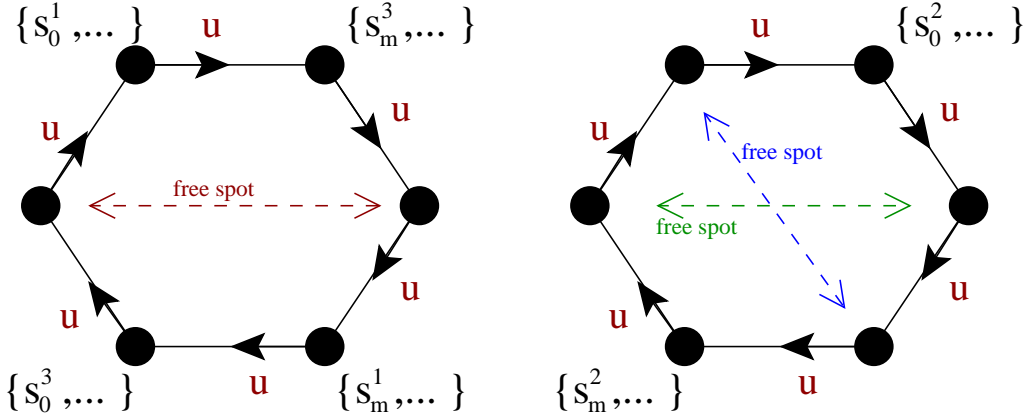}
\caption{One of the graphs $\Gamma \in {\cal C}_{w,L,r}^{h}$, where
$w=u^3$ for some primitive $u \in \mathbf{F}_k$, $h=3$, $L=2$ and
$r=3$. (We use the labels $s^j_i$ introduced in Section
\ref{sbs:categories}.) This $\Gamma$ contains two cycles, each of
which consists of six copies of $u$. Each of the cycles can correspond
to up to three distinct $L$-cycles (indeed, $h=3$), so $\Gamma$ can contain
up to six $L$-cycles. As it already contains three, it has $6-3=3$ free
spots.}  \label{fig:c_hwlr_example}
\end{figure}

Now let $\alpha_{w,L,r}^{h}[j]$ denote the number of graphs in ${\cal
C}_{w,L,r}^{h}$ with $j$ free spots. We can define the
generating function of the $\alpha_{w,L,r}^{h}[j]$:
\[g_{w,L,r}^h(t) = \alpha_{w,L,r}^{h}[0] + \alpha_{w,L,r}^{h}[1]t +
\alpha_{w,L,r}^{h}[2]t^2 + \ldots
\]
and obviously $g_{w,L,r}^h(1) = \big| {\cal C}_{w,L,r}^{h} \big|$.

Before we derive a recursion formula for this function, we want to
illustrate by writing explicit expressions for $r=0,1,2$.
Every connected component (=cycle) in every
$\Gamma \in {\cal C}_{w,L,r}^{h}$, realizes at least one
$L$-cycle. Thus, when $r=0$ and there are no $L$-cycles at all, we
have only the empty graph which has no free spots, and so
$g_{w,L,0}^h(t) = 1$. When $r=1$, we have a single graph in ${\cal
C}_{w,L,1}^{h}$, with $h-1$ free spots (a single cycle of $Lh$
copies of $u$), and therefore $g_{w,L,1}^h(t) = t^{h-1}$. For $r=2$
there is always a two-cycle graph with one $L$-cycle in each cycle.
This graph has $2(h-1)$ free spots. In addition, if $h \geq 2$, there
are also graphs consisting of a single cycles that corresponds to two
$L$-cycles. There are $L(h-1)$ ways to place the two $L$-cycles and such
graphs have $(h-2)$ free spots. Thus, for $h=1$, $g_{w,L,2}^h(t) = t^{2(h-1)}$
and for $h \geq 2$ $g_{w,L,2}^h(t) = L(h-1)t^{h-2} + t^{2(h-1)}$.

We now want to derive the functions $g_{w,L,r}^h$ by recursing
on $r$. Let $\Gamma$ be a graph in ${\cal C}_{w,L,r}^{h}$ with $j$ free
spots. In what manners can we add another $L$-cycle and make it a graph in
${\cal C}_{w,L,r+1}^{h}$? We have two options: we can put the new
$L$-cycle in one of the $j$ free spots, in $L$ possible ways ($L$
possible cyclic shifts), resulting in $j \cdot L$ different graphs
in ${\cal C}_{w,L,r+1}^{h}$, each of which has $j-1$ free spots.
Alternatively, we can add one new cycle to $\Gamma$ and put there
our new $L$-cycle, which yields a single graph in ${\cal
C}_{w,L,r+1}^{h}$ with $j+h-1$ free spots. Thus, we have:
\[g_{w,L,r+1}^h(t) = L \cdot \left(g_{w,L,r}^h(t)\right)' +
t^{h-1} \cdot g_{w,L,r}^h(t). \]

We now go back to the right side of \eqref{eq:left}. Recall that
$f_{hZ_{1/Lh}}(t) = e^{-\frac{1}{Lh}}e^{\frac{t^h}{Lh}}$. If we
write $f_{hZ_{1/Lh}} ^{~~~(r)}(t) =
e^{-\frac{1}{Lh}}e^{\frac{t^h}{Lh}} \cdot q_{L,r}^h(t)$ where
$q_{L,r}^h(t)$ is the appropriate polynomial, then
\[q_{L,0}^h(t) = 1 \]
and \[q_{L,r+1}^h(t) = \left(q_{L,r}^h(t)\right)' + \frac{1}{L}
\cdot t^{h-1} \cdot q_{L,r}^h(t) \]

Thus $g_{w,L,r}^h(t) = L^r \cdot q_{L,r}^h(t)$, and we can
conclude:

\begin{equation}
\frac{1}{L^{r}}\big| {\cal C}_{w,L,r}^{h} \big| = \frac{1}{L^r}
\cdot g_{w,L,r}^h(1) = q_{L,r}^h(1) = f_{hZ_{1/Lh}} ^{~~~(r)}(1)
\end{equation}
when the last equality comes from the fact that
$e^{-\frac{1}{Lh}}e^{\frac{t^h}{Lh}}\big|_{t=1}=1$. \\ \\

The proof of Theorem \ref{ther:limit-xwln} is now complete.
Corollary \ref{cor:moments} is also proved. For completeness sake,
here is a short proof of the last sentence in the corollary,
namely, that $\lim_{n \to
\infty}\mathbb{E}(X_{w,L}^{(n)})=\frac{|H(d,L)|}{L}$.

\begin{proof}
\begin{eqnarray*}
\lim_{n \to \infty}\mathbb{E}(X_{w,L}^{(n)}) &=&
\mathbb{E}(Y_{d,L}) = \mathbb{E}\left( \sum_{h\in H(d,L)}hZ_{1/Lh}
\right) = \sum_{h\in H(d,L)} \mathbb{E}\left( hZ_{1/Lh} \right) \\
&=& \sum_{h\in H(d,L)} h \cdot \frac{1}{Lh} = \frac{|H(d,L)|}{L}
\end{eqnarray*}
\end{proof}

\begin{remark}
In fact, the technique presented here are likely to yield
further results. The
method of moment applies as well to random vectors (for
distributions that are determined by their moments, see,
e.g., \cite{jlr}, Theorem 6.2). The joint moments of
$X_{w,L_1}^{(n)}, \ldots, X_{w,L_k}^{(n)}$ for some $k$ and
positive integers $L_1, \ldots, L_k$ can be analyzed similarly to
the way we analyzed the moments of $X_{w,L}^{(n)}$ for some $L$,
and the limit joint distribution of these variables is probably
determined by its moments.
\end{remark}
\begin{remark}
It is of great interest to study word maps for other groups or
for non-uniform distributions on $S_n$. For results of this nature
see \cite{ben06}.
\end{remark}

\section{Open problems}
\label{sec:open_prob}

Many interesting questions and conjectures were raised in this
paper. We collect them here.

\begin{itemize}
\item
Let $u$ and $w$ be two words such that for any finite group $G$, the
distribution of the two word maps on $G$ are identical. Is it true
that $u \sim w$?
\item
(Conjecture \ref{conj:beta=phi}) $\beta(w) = \phi(w)$ for every
word $w$.
\item
(A consequence of Conjecture~\ref{conj:beta=phi}:) For every word $w$,
and sufficiently large $n$, a random permutation in the image of $w$ in $S_n$
has, on average, at least one fixed point.
\item
Friedman's Conjecture: For every base graph $G$, almost surely all
new eigenvalues in lifts of $G$ are $\le \rho + o(1)$.
\item
Nica's theorem determines the behavior of the number of $L$-cycles
in the $S_n$-image of any formal word $w$. There are numerous
other parameters of such permutations (e.g. the number of cycles)
whose typical behavior is still not understood.
\end{itemize}

\end{document}